\newtheorem{theorem}{Theorem}[section]
\newtheorem{lemma}[theorem]{Lemma}
\newtheorem{proposition}[theorem]{Proposition}
\newtheorem{corollary}[theorem]{Corollary}
\newtheorem{definition}[theorem]{Definition}
\newtheorem{remark}[theorem]{Remark}
\newcommand{\Q}{\mathbb{Q}}
\newcommand{\Z}{\mathbb{Z}}
\newcommand{\F}{\mathbb{F}}
\newcommand{\C}{\mathbb{C}}
\newcommand{\kommentar}[1]{}
\def\cL{\mathcal{L}}
\def\cZ{\mathcal{Z}}
\newcommand{\degL}{\mathfrak{n}}
\newcommand{\ccom}[1]{{\color{red}{Chantal: #1}} }
\newcommand{\mcom}[1]{{\color{brown}{Matilde: #1}} }
\begin{document}

\title{On the vanishing of twisted $L$-functions of elliptic curves over rational function fields}
\author{Antoine Comeau-Lapointe, Chantal David, Matilde Lalin and Wanlin Li}

\address{Antoine Comeau-Lapointe: Department of Mathematics and Statistics, Concordia University, 1455 de Maisonneuve West, Montr\'eal, Qu\'ebec, Canada H3G 1M8}
\email{antoine.comeau-lapointe@concordia.ca}

\address{Chantal David: Department of Mathematics and Statistics, Concordia University, 1455 de Maisonneuve West, Montr\'eal, Qu\'ebec, Canada H3G 1M8}
\email{chantal.david@concordia.ca}

\address{Matilde Lal\'in:  D\'epartement de math\'ematiques et de statistique,
                                    Universit\'e de Montr\'eal.
                                    CP 6128, succ. Centre-ville.
                                     Montreal, QC H3C 3J7, Canada}\email{mlalin@dms.umontreal.ca}

\address{Wanlin Li: Centre de recherches math\'ematiques,
                                    Universit\'e de Montr\'eal.
                                    CP 6128, succ. Centre-ville.
                                     Montreal, QC H3C 3J7, Canada}\email{liwanlin@crm.umontreal.ca}

\begin{abstract}
We investigate in this paper the vanishing  at $s=1$ of the twisted $L$-functions of elliptic curves $E$ defined over the rational function field $\F_q(t)$ (where $\F_q$ is a finite field of $q$ elements and characteristic  $\geq 5$) for twists by Dirichlet characters of prime order $\ell \geq 3$, from both a theoretical and numerical point of view.  In the case of number fields, it is predicted that such vanishing is a very rare event, and our numerical data seems to indicate that this is also the case over function fields for non-constant curves. For constant curves, we adapt the techniques of \cite{Li-vanishing, Donepudi-Li} who proved vanishing at $s=1/2$ for infinitely many Dirichlet $L$-functions over $\F_q(t)$ based on the existence of one, and we can prove that if there is one $\chi_0$ such that $L(E, \chi_0, 1)=0$, then there are infinitely many. Finally, we provide some examples which show that twisted $L$-functions of constant elliptic curves over $\F_q(t)$ behave differently than the general ones.
\end{abstract}
\keywords{non-vanishing of $L$-functions; twisted $L$-functions of elliptic curves; function fields; elliptic curve rank in extensions}
\subjclass[2020]{Primary 11G05; Secondary 11G40, 14H25}

\date{\today}
\numberwithin{equation}{section}
\maketitle

\section{Introduction} 

Let $E$ be an elliptic curve over $\Q$ with $L$-function $L(E, s) =\sum_{n} a_n n^{-s}$, and $\chi$ be a Dirichlet character. Let 
$L(E, \chi, s) = \sum_{n} a_n \chi(n) n^{-s}$ be the twisted $L$-function.
By the Birch and Swinnerton-Dyer conjecture, the vanishing of $L(E, \chi, s)$ at $s=1$ should be related to the growth of the rank of the Mordell-Weil group of $E$ in the abelian extension of $\Q$ associated to $\chi$. Heuristics based on the distribution of modular symbols and random matrix theory (\cite[Conjecture 1.2]{DFK2}, \cite{MR}) have led to conjectures predicting that the vanishing of the twisted $L$-functions $L(E, \chi, s)$ at $s=1$ is a very  rare event as $\chi$ ranges over characters of prime order $\ell\ge 3$. For instance, it is predicted that there are only finitely many characters $\chi$ of order $\ell > 5$ such that $L(E, \chi, 1)=0$. 
 Mazur and Rubin rephrased this in terms of ``Diophantine Stability", and conjectured that if  $E$ is an elliptic curve over $\Q$ and $K/\Q$ is any real
abelian extension such that $K$ contains only finitely many subfields of
degree $2,3$, or $5$ over $\Q$,  then the group of $K$-rational points $E(K)$ is
finitely generated. They also proved that for each $\ell$ (under some hypotheses that can be shown to hold in certain contexts), there are infinitely many cyclic extensions $K/\Q$ of order $\ell$ such that $E(K)=E(\Q)$ (and then, assuming the Birch and Swinnerton-Dyer conjecture, such that the twisted $L$-functions $L(E, \chi, s)$ associated to the extensions $K/\Q$ do not vanish) \cite{MR-Diophantine-Stability}.

 We remark that the case of vanishing of quadratic twists is very different from the higher order case $\ell \ge 3$ considered in this work, as the $L$-function of $E$ twisted by a quadratic character of conductor $D$ corresponds to the $L$-function of another elliptic curve $E_D$, and for half of the quadratic twists, $L(E, \chi_D, 1)=1$. Goldfeld has conjectured that half of the twists $E_D/\Q$ have rank 0, and half have rank 1 (asymptotically) \cite{Goldfeld}). Furthermore, Gouvea and Mazur \cite{gouvea-mazur}
have shown that the analytic rank of $E_D$ is at least two for $\gg X^{1/2-\epsilon}$ of the quadratic discriminants $|D| \leq X$. It is conjectured that the number of such discriminants $|D| \leq X$ should be asymptotic to $C_E X^{3/4} \log^{b_E}(X)$ \cite{CKRS-frequency}, for some constants $C_E$ and $b_E$ depending on the curve $E$. 
The case of nonabelian extensions $K/\Q$ of degree $d$ with Galois group $S_d$ is also different from the abelian extensions of order $\ell \ge 3$: in recent work, Lemke Oliver and Thorne \cite{LOT} showed that there are infinitely many such extensions where $\mbox{rank}(E(K)) > \mbox{rank}(E(\Q))$, for each $d \geq 2$, and Fornea \cite{Fornea} has shown that for some curves $E/\Q$, the
analytic rank of $E$ increases for a positive proportion of the quintic fields with Galois group $S_5$.


The vanishing (and non-vanishing) of twisted $L$-functions of elliptic curves is closely related to the one-level density, which is the study of low-lying zeroes, or the average analytic rank. 
This was studied over number fields and functions fields, for quadratic and higher order twists. For quadratic twists, it is possible to prove results on the one-level density strong enough to deduce that a positive proportion of twists with even (respectively odd) analytic rank do no vanish (respectively vanish of order 1) at the central critical point \cite{HB, Comeau-Lapointe-JNT}. 
The one-level density, or average rank, of higher order twists for elliptic curves $L$-functions was studied by \cite{cho} over number fields and \cite{Meisner-S,Comeau-Lapointe-JNT} over function fields.
Quadratic twists of elliptic curve over functions fields were also studied by \cite{BFKRG}
who obtained results on the correlation of the analytic ranks of two twisted elliptic curves. The behavior of the algebraic rank of elliptic curves in cyclic extensions of $\Q$ was investigated by Beneish, Kundu, and Ray \cite{BKR}.


We investigate in this article the vanishing  at $s=1$ of the twisted $L$-functions of elliptic curves $E$ defined over the rational function field $\F_q(t)$, \footnote{Throughout this article, we assume that $\F_q$ is a finite field of $q$ elements and characteristic $\geq 5$.} for twists by Dirichlet characters of prime order $\ell \geq 3$,
from both a theoretical and numerical point of view.  It is natural to ask if the recent results of  Li \cite{Li-vanishing} and Donepudi and Li \cite{Donepudi-Li}, who have found infinitely many instances of vanishing for $L$-functions of Dirichlet characters at $s=1/2$, can be extended to $L$-functions of elliptic curves twisted by Dirichlet characters. We find that this is the case when $E$ is a constant elliptic curve over $\F_q(t)$\footnote{Constant elliptic curves, i.e. elliptic curves over $\F_q$ considered as a curve over $\F_q(t)$, were studied by many authors because of their special properties. In particular, Milne showed that the Birch and Swinnerton-Dyer conjecture is true for constant elliptic curves \cite{Milne}.},
and we can produce infinitely many cases of vanishing at the central critical point for characters of order $\ell$ provided we find one
(Theorem  \ref{thm2}).
Then, the conjectures of \cite{DFK2,MR-experimental} do not hold in the special case of constant elliptic curves, and we present specific numerical examples in Section \ref{section-numerical-constant}.

We also study non-constant elliptic curves over $\F_q(t)$ where $q$ is a power of a prime $p \ge 5$, say
$E: y^2 = x^3 + a(t) x + b(t)$, for some polynomials $a(t), b(t) \in \F_q[t]$.
 The $L$-function of $E/\F_q(t)$ is defined analogously as for $E/\Q$, by an infinite Euler  product over the primes of $\F_q(t)$ (see \eqref{L-function}), but in this case, it follows from the work of Weil and Deligne that, after setting $u=q^{-s}$,  $L(E,s)= \mathcal{L}(E, u)$, a polynomial in $\Z[u]$.
 Similarly, the twisted $L$-function $\mathcal{L}(E, \chi, u)$ is a polynomial in $\Z[\zeta_\ell][u]$, where $\chi$ is a Dirichlet character of order $\ell$ over $\F_q(t)$. More details and all relevant definitions are given in Section \ref{background}.  

We present in Section \ref{section-numerical} computational results  for the vanishing of numerous twists of two base elliptic curves over $\F_q(t)$, the Legendre curve and a second curve, chosen to have good reduction at infinity. The data seems to indicate that the conjectures of \cite{DFK2, MR-experimental} also hold for non-constant elliptic curves over function fields, while presenting some unexpected features.
To our knowledge, this is the first  data about the vanishing of $L$-functions of elliptic curves twisted by characters of order $\ell \geq 3$, over function fields. 
The case of quadratic twists of elliptic curves over function fields was considered by Baig and Hall \cite{BaigHall} to test Goldfeld's conjecture in that context,
and our numerical computations are similar. 
  
The case of a constant curve $E/\F_q(t)$  is defined by taking an elliptic curve $E_0/\F_q$ and considering its base change to $\F_q(t)$, denoted by $E = E_0 \times_{\F_q} \F_q(t)$. In this case, the roots of $\cL(E, \chi, u)$ can be described
in terms of the roots of the $L$-functions $\mathcal{L}(E_0, u)$ and $\cL(C, u)$, where the $L$-functions are respectively associated to the elliptic curve $E_0/\F_q$ and the $\ell$-cyclic cover $C$ 
over $\mathbb{P}^1_{\F_q}$ corresponding to the Dirichlet character $\chi$  (see Section \ref{section-constant}).
This allows us to use a generalized version of the results of Li \cite{Li-vanishing} and Donepudi--Li \cite{Donepudi-Li} about vanishing of the Dirichlet $L$-functions $\cL(\chi, u)$ to obtain some vanishing for $\cL(E, \chi, u)$ at $u=q^{-1}$. 
The argument of \cite{Li-vanishing, Donepudi-Li} has two distinct parts, first finding one character $\chi_0$ such that
$\cL(\chi_0, u_0)=0$ for some fixed $u_0$, and then sieving to produce infinitely many such characters. 
The order of $q \bmod \ell$ is related to the presence/absence of $\ell$-th roots of unity in $\F_q(t)$, which makes the study of the characters  of order $\ell$ delicate, and
the authors of \cite{Li-vanishing, Donepudi-Li} restrict to the Kummer case where $q \equiv 1 \bmod \ell$. As we need to treat all the cases (in particular, we often work over the finite field $\F_p$ where $p$ is prime), we generalize their sieving beyond the Kummer case.  We also need to consider vanishing at any $u_0$ where $\cL(E_0,u_0)=0$, and not only $u_0=q^{-1/2}$ as in their work.

We recall that an algebraic integer $\alpha$ is called a $q$-Weil integer if $|\alpha|=q^{1/2}$ under every complex embedding.

\begin{theorem} \label{thm1} \label{prop-general-counting} Let $\ell$ be a prime and $q$ be a prime power coprime to $\ell$. Let $u_0$ be a $q$-Weil integer. Suppose there exists a Dirichlet character $\chi_0$ over $\F_q(t)$ of order $\ell$ and with conductor of degree $d_0$ such that $\cL(\chi_0, u_0^{-1})=0$. 
Then, there are at least $\gg q^{2n/d_0}$ Dirichlet characters $\chi$ of order $\ell$ over $\F_q(t)$ with conductor of degree bounded by $n$ such that  $\cL(\chi, u_0^{-1})=0$.
\end{theorem}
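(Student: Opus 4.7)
The plan is to extend the sieving strategy of Li and Donepudi--Li to arbitrary $q$-Weil integers $u_0$ and beyond the Kummer regime via a pullback construction on $\ell$-cyclic covers of $\mathbb{P}^1$. Encode $\chi_0$ by the smooth projective geometrically connected cyclic $\ell$-cover $\pi_0 \colon C_0 \to \mathbb{P}^1_{\F_q}$ obtained from the Galois correspondence (in the Kummer case $\ell \mid q-1$ this is simply $y^\ell = F_0(x)$, with $F_0$ representing the conductor of $\chi_0$). Then $\cL(\chi_0, u)$ is the characteristic polynomial of Frobenius acting on the $\chi_0$-isotypic component of $H^1_{\mathrm{\acute{e}t}}(C_0 \otimes \overline{\F_q}, \overline{\Q_\ell})$, so the hypothesis $\cL(\chi_0, u_0^{-1}) = 0$ is exactly the statement that $u_0$ is a Frobenius eigenvalue on this piece.

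For each non-constant rational function $g = A/B \in \F_q(t)$ with $\gcd(A,B) = 1$, form the fiber product $C_g := C_0 \times_{\phi_g, \mathbb{P}^1} \mathbb{P}^1$ along the self-morphism $\phi_g \colon t \mapsto g(t)$. It is an $\ell$-cyclic cover of the source $\mathbb{P}^1$, geometrically connected except when $F_0 \circ g$ is an $\ell$-th power in $\F_q(t)^\times$, which is a thin condition. When $C_g$ is connected it corresponds to a Dirichlet character $\chi_g$ of order exactly $\ell$, and the natural projection $C_g \to C_0$ is $\ell$-cyclic equivariant and defined over $\F_q$. Hence the pullback
\[
\pi_g^{\ast} \colon H^1_{\mathrm{\acute{e}t}}(C_0)_{\chi_0} \hookrightarrow H^1_{\mathrm{\acute{e}t}}(C_g)_{\chi_g}
\]
is Frobenius-equivariant and injective, forcing the divisibility $\cL(\chi_0, u) \mid \cL(\chi_g, u)$ in $\overline{\Q_\ell}[u]$, and in particular $\cL(\chi_g, u_0^{-1}) = 0$.

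To count, set $k := \max(\deg A, \deg B)$. The ramification of $C_g \to \mathbb{P}^1$ is supported on $\phi_g^{-1}(\mathrm{Ram}(\pi_0))$, which has degree at most $d_0 k$ up to bounded corrections at the place at infinity, so the finite conductor of $\chi_g$ has degree $\leq d_0 k + O(1)$. Choosing $k = \lfloor n/d_0 \rfloor$ therefore guarantees $\deg \mathrm{cond}(\chi_g) \leq n$. There are $\gg q^{2k+1}$ coprime pairs $(A,B)$ with $\max(\deg A, \deg B) \leq k$, so once one establishes that the map $g \mapsto \chi_g$ has fibers of size $O_{d_0, \ell}(1)$, one obtains $\gg q^{2k} = q^{2n/d_0}$ distinct Dirichlet characters $\chi$ of order $\ell$ with conductor of degree $\leq n$, each satisfying $\cL(\chi, u_0^{-1}) = 0$.

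The main obstacle is precisely this fiber bound: a coincidence $\chi_g = \chi_{g'}$ translates into the algebraic condition $(F_0 \circ g)/(F_0 \circ g') \in (\F_q(t)^\times)^\ell$, which must be controlled using the irreducibility of $F_0$ and the finiteness of the automorphism group of the cover $C_0$. A key feature of the approach is that the whole construction is intrinsic: the fiber product, the equivariant cohomology pullback, and the resulting $L$-function divisibility do not require $\ell$-th roots of unity in $\F_q$, so the method works uniformly in the Kummer and non-Kummer regimes, accommodating the extension beyond Donepudi--Li that the authors need. The passage from the central point to an arbitrary $q$-Weil integer $u_0$ is likewise automatic, since the argument propagates $u_0$ as a genuine Frobenius eigenvalue, without invoking any critical-line-specific structure.
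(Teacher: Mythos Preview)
Your overall strategy coincides with the paper's: associate to $\chi_0$ the cyclic $\ell$-cover $C_0\to\mathbb{P}^1$, pull back along self-maps $\phi_g:\mathbb{P}^1\to\mathbb{P}^1$ given by rational functions $g=A/B$, and use the non-constant equivariant map $C_g\to C_0$ to transport the Frobenius eigenvalue $u_0$. The paper phrases this with the explicit affine models of Bary-Soroker--Meisner rather than fiber products, but the geometry is the same, and your cohomological divisibility $\cL(\chi_0,u)\mid\cL(\chi_g,u)$ is in fact slightly sharper than what the paper states (it only records $\cL(C_0,u)\mid\cL(C_g,u)$, which already suffices).

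The gap is exactly where you flag it: the fiber bound. Your asserted $O_{d_0,\ell}(1)$ bound on $\#\{g:\chi_g=\chi\}$ is not justified, and the sketch you give does not work. The conductor $F_0$ is in general a product of several primes, not irreducible, so that hypothesis cannot be invoked; and while $\mathrm{Aut}(C_0)$ is finite, this controls automorphisms of the \emph{target}, not the number of dominant maps from a curve $C_g$ whose genus grows linearly in $n$. De Franchis--type finiteness gives bounds depending on $g(C_g)$, hence on $n$, not a uniform constant. The paper does \emph{not} claim an $O(1)$ fiber; following Donepudi--Li it bounds the multiplicity only by $qn^2q^{\varepsilon n}$, which is enough since it is absorbed by $q^{2n/d_0}$.

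There is a second omission that interacts with the first. You take all coprime $(A,B)$ with $\max(\deg A,\deg B)\le k$ and assert $\gg q^{2k}$ distinct characters. But without controlling square-freeness of the pulled-back branch divisor, many $(A,B)$ can collapse to covers with strictly smaller conductor (primes appearing to $\ell$-th power drop out), and you have no handle on how often this happens or on the resulting fibers. The paper deals with this by first applying Poonen's square-free sieve over $\F_q[t]$ to the homogenized form $G(u,v)=N_{n_q}(f_1^*\cdots f_{\ell-1}^*)$, which secures a positive density of $(A,B)$ for which the conductor of $\chi_g$ has the expected shape, and only then invokes the subexponential multiplicity bound. Your outline would be complete once you either (i) supply a genuine proof of the $O(1)$ fiber bound, or (ii) insert these two steps (Poonen's sieve and the $qn^2q^{\varepsilon n}$ multiplicity estimate) in place of the unproven claim.
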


We prove the above theorem in Section \ref{section-sieve}. The next result is then a direct consequence of Theorem \ref{prop-general-counting}, using the properties of constant elliptic curves discussed in Section \ref{section-constant}.

\begin{theorem} \label{thm2} \label{isotrivial-counting}  Let $E_0$ be an elliptic curve over $\F_q$, and let $E = E_0 \times_{\F_q} \F_q(t)$. Suppose there exists a Dirichlet character $\chi_0$ over $\F_q(t)$ of order $\ell$ and with conductor of degree $d_0$ 
 such that $\cL(E, \chi_0, q^{-1})=0$. Then, there are at least $\gg q^{2n/d_0}$ Dirichlet characters $\chi$ of order $\ell$ over $\F_q(t)$ with conductor of degree bounded by $n$ such that  $\cL(E, \chi, q^{-1})=0$.
\end{theorem}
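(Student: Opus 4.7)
The plan is to deduce Theorem \ref{isotrivial-counting} directly from Theorem \ref{prop-general-counting} by combining it with the factorization of $\cL(E,\chi,u)$ for a constant elliptic curve established in Section \ref{section-constant}. Write $\cL(E_0, u) = (1 - \alpha u)(1 - \bar\alpha u)$ with $\alpha \bar\alpha = q$, so that both $\alpha$ and $\bar\alpha$ are $q$-Weil integers. A prime-by-prime comparison of Euler products over $\F_q(t)$ yields the constant-curve factorization
\[ \cL(E, \chi, u) = \cL(\chi, \alpha u) \cdot \cL(\chi, \bar\alpha u), \]
and evaluating at $u = q^{-1}$, together with $\alpha \bar\alpha = q$, gives
\[ \cL(E, \chi, q^{-1}) = \cL(\chi, \bar\alpha^{-1}) \cdot \cL(\chi, \alpha^{-1}). \]

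From the hypothesis $\cL(E, \chi_0, q^{-1}) = 0$, I would then extract an element $u_0 \in \{\alpha, \bar\alpha\}$ for which $\cL(\chi_0, u_0^{-1}) = 0$. Since $u_0$ is a $q$-Weil integer and $\chi_0$ has order $\ell$ and conductor degree $d_0$, applying Theorem \ref{prop-general-counting} with this particular $u_0$ will produce at least $\gg q^{2n/d_0}$ Dirichlet characters $\chi$ of order $\ell$ and conductor degree at most $n$ satisfying $\cL(\chi, u_0^{-1}) = 0$. By the same factorization, every such $\chi$ automatically satisfies $\cL(E, \chi, q^{-1}) = 0$, which is exactly the conclusion of Theorem \ref{isotrivial-counting}.

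Because Theorem \ref{prop-general-counting} supplies all of the sieving and counting, there is essentially no obstacle left in the present argument: the only work consists in recording the constant-curve factorization, which is carried out in Section \ref{section-constant} and is formal once one matches local Euler factors $(1 - a_P\, \chi(P)\, u^{\deg P} + \chi(P)^2 q^{\deg P} u^{2 \deg P})^{-1}$ against $(1 - \chi(P)(\alpha u)^{\deg P})^{-1}(1 - \chi(P)(\bar\alpha u)^{\deg P})^{-1}$ at every prime $P$ of $\F_q(t)$, using $a_P(E_0) = \alpha^{\deg P} + \bar\alpha^{\deg P}$. The main conceptual point is therefore not in this proof itself but in Theorem \ref{prop-general-counting}, which replaces the strong Kummer-type restriction $q \equiv 1 \bmod \ell$ used in \cite{Li-vanishing, Donepudi-Li} by an argument valid for arbitrary $q$-Weil integers $u_0$ and any prime $\ell$ coprime to $q$.
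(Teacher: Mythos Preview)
Your proposal is correct and follows essentially the same route as the paper: use the constant-curve factorization $\cL(E,\chi,u)=\cL(\chi,\alpha u)\,\cL(\chi,\bar\alpha u)$ from Theorem~\ref{thm-ulmer} to reduce the vanishing $\cL(E,\chi_0,q^{-1})=0$ to $\cL(\chi_0,u_0^{-1})=0$ for some $q$-Weil integer $u_0\in\{\alpha,\bar\alpha\}$, and then invoke Theorem~\ref{prop-general-counting}. The paper phrases the reduction via Corollary~\ref{coro:Li} (passing through $\cL(C_{\chi_0},u)$), whereas you work directly with $\cL(\chi_0,u)$, but this is only a cosmetic difference.
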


 Then, to guarantee that a constant elliptic curve $E/\F_q(t)$ has infinitely many twists of order $\ell$ such that $L(E, \chi, u)$ vanishes at $q^{-1}$, it suffices to find one. 
 Using the results of Section \ref{section-constant}, this can be rephrased in terms of finding curves $C/\F_q$ which are $\ell$-cyclic covers of $\mathbb{P}^1_{\F_q}$ and such that $\cL(E_0, u)$ divides $\cL(C, u)$, and we investigate this question numerically in Section \ref{section-isotrivial}, where we 
find isogeny classes of elliptic curves $E_0$ over different prime fields such that $\cL(E,\chi,q^{-1})=0$ for characters $\chi$ of prime order $\ell =3,5,7,11$.
 One observation from the data is the existence of supersingular curves defined over primes fields $\F_p$ which admit a degree $\ell$ cyclic map to $\mathbb{P}^1$ ramifying at $4$ points where $p \equiv -1 \bmod \ell$. The existence of such curves does not follow from previous results on the topic and one may hope to prove this statement following the strong evidence presented in Table \ref{table:d4}.
 
 It is natural to ask if the same dichotomy (no instances of vanishing or infinitely many cases of vanishing) also holds for non-constant elliptic curves over $\F_q(t)$, but there is no reason to believe it would be the case.
The ideas leading to the proof of Theorem \ref{isotrivial-counting} for constant curves do not apply to the general case, as the change of variable trick used to produce infinitely many extensions where $E$ acquires points would send points on $E$ to points on a different elliptic curve when $E$ is not constant. However, there are results of that type for an elliptic curve $E$ over $\Q$ due to Fearnley, Kisilevsky, and Kuwata \cite{FKK}, where the authors prove that if there is one cyclic cubic field $K$ such that $E(K)$ is infinite, then there are infinitely many, and there are always infinitely many such $K$ when $E(\Q)$ contains at least 6 points. On the non-vanishing side, Brubaker, Bucur, Chinta, Frechette and Hoffstein \cite{BBCFH} use the method of multiple Dirichlet series to prove that if there exists a single non-vanishing order $\ell$ twist of an $L$--function associated to a cuspidal automorphic representation of $GL(2,\mathbb{A}_K)$, then there are infinitely many.

The structure of this article is as follows: we define in Section \ref{background} the $L$-functions attached to Dirichlet characters and elliptic curves over $\F_q(t)$, and we recall their properties. We discuss in Section \ref{section-constant} the case of  $L$-functions of constant elliptic curves.
We describe the $\ell$-cyclic covers of $\mathbb{P}^1_{\F_q}$  and their characters in Section \ref{section-sieve}, for all cases (not only the Kummer case $q \equiv 1 \bmod \ell$) using the work of Bary-Soroker and Meisner \cite{BSM},
 and we then generalize the sieves of \cite{Li-vanishing, Donepudi-Li} to those general $\ell$-cyclic covers. We then use those results to prove Theorems \ref{thm1} and \ref{thm2}.
Finally, we describe our computations in Section \ref{section-algo}, and we present our numerical data in Sections  \ref{section-numerical-constant} and \ref{section-numerical}.

\noindent {\bf Acknowledgments.} The authors would like to thank Patrick Meisner for helpful discussions, and the anonymous referees for helpful comments that greatly improved the exposition of this paper.
This work is supported by the Natural Sciences and Engineering Research Council of Canada
(NSERC Discovery Grant \texttt{155635-2019} to CD, \texttt{335412-2013} to ML), by the Fonds de recherche du Qu\'ebec - Nature et technologies (Projet de recherche en \'equipe \texttt{300951} to CD and ML), and by the Centre de recherches math\'ematiques and the Institut des sciences math\'ematiques (CRM-ISM postdoctoral fellowship to WL). Some of the computations were checked using the computational software MAGMA.

\section{Dirichlet characters, elliptic curves and $L$-functions over $\F_q(t)$}
\label{background} \label{section-2}

\subsection{Dirichlet characters of order $\ell$} 

Let $\ell$ be a prime not dividing $q$. We review here the theory of Dirichlet characters of order $\ell$ over $\F_q(t)$ and their $L$-functions. We refer the reader to \cite{DFL} and \cite{BSM} for more details.

Let $n_q$ be the multiplicative order of $q$ modulo $\ell$. We say that we are in the Kummer case if $n_q=1$ and in the non-Kummer case otherwise. We also say that a monic irreducible polynomial $P \in \F_q[t]$ is $n_q$-divisible if $n_q \mid \deg{P}$.  

We fix once and for all an isomorphism $\Omega$ from the $\ell$-th roots of unity in $\F_{q^{n_q}}^*$ to $\mu_\ell$, the $\ell$-th roots of unity in $\C^*$.

We first define the $\ell$-th order residue symbol $$\chi_P : \F_q[t]/(P) \rightarrow \mu_\ell,$$ for $P$ an irreducible $n_q$-divisible monic polynomial in $\F_q[t]$. 
It is clear that  the $\ell$-th residue symbols $\chi_P$ can be defined only for the $n_q$-divisible primes $P$, since we must have $\ell \mid q^{\deg{P}}-1$: indeed, unless $n_q\mid \deg(P)$, the order of the group of non-zero elements in the residue field $\F_P = \F_q[t]/(P)$ is not divisible by $\ell$, and therefore it does not contain any non-trivial $\ell$-th root of unity.

For any $a \in \F_q[t]$, if $P \mid a$, then $\chi_P(a) = 0$, and otherwise
$\chi_{P}(a) = \alpha,$
where $\alpha$ is the unique $\ell$-th root of unity in $\mathbb{C}^*$ such that
\begin{align} \label{def-chiP}  a^{\frac{q^{\deg(P)}-1}{\ell}} \equiv \Omega^{-1}(\alpha) \bmod P. \end{align}
If $F \in \F_q[t]$ is any monic polynomial supported only on $n_q$-divisible primes, writing $F = P_1^{e_1} \cdots  P_s^{e_s}$ with distinct primes $P_i$, we define 
\begin{equation*}
\chi_F = \chi_{P_1}^{e_1} \cdots \chi_{P_s}^{e_s}.
\end{equation*}
Then, $\chi_F$ is a character of order dividing $\ell$ with conductor $P_1 \cdots P_s$. Conversely, the primitive characters of order $\ell$ and conductor $P_1 \cdots P_s$, where the $P_i$ are $n_q$-divisible primes, are given by taking all choices  $1 \leq e_i \leq \ell-1$. Then, the conductors of the primitive characters are the square-free monic polynomials $F \in  \F_q[t]$ supported on $n_q$-divisible primes, and for each such conductor, there are $(\ell-1)^{\omega(F)}$ such characters, where $\omega(F)$ is the number of primes dividing $F$. 


We can also write each primitive character of order $\ell$ with conductor $F$ as
\begin{align} \label{decompose} \chi_F = \chi_{F_1} \chi^2_{F_2} \cdots \chi^{\ell-1}_{F_{\ell-1}}\end{align}
corresponding to a decomposition $F=F_1 \cdots F_\ell$ where the $F_i$'s are square-free and coprime.

 For any Dirichlet character $\chi$, we say that $\chi$ is even if its restriction  to $\F_q$ is trivial; otherwise, we say that $\chi$ is odd. 

Dirichlet characters are also defined at the prime at infinity $P_\infty$. The following statement clarifies how to compute $\chi(P_\infty)$. 
\begin{lemma} \label{leeme-section2} Let $F$ be a monic squarefree polynomial in $\F_q[t]$, and $\chi$ be a Dirichlet character on $\F_q[t]$ of order $\ell$ with conductor $F$. 

If $q\not \equiv 1 \bmod{\ell}$, then $\chi$ does not ramify at infinity, $\chi(P_\infty)= 1$, and $\chi$ is even. 

If $q\equiv 1 \bmod{\ell}$, let $\chi = \chi_{F_1} \chi_{F_2}^2\cdots \chi_{F_{\ell-1}}^{\ell-1}$ as in \eqref{decompose}.
Then,\\ $\chi$ ramifies at $P_\infty \iff \ell \nmid \deg(F_1 F_2^2 \cdots F_{\ell-1}^{\ell-1}) \iff \chi$ is odd, and 
$$\chi(P_\infty) = \begin{cases} 1 &  \ell \mid \deg(F_1 F_2^2 \cdots F_{\ell-1}^{\ell-1}), \\
0 & \ell \nmid \deg(F_1 F_2^2 \cdots F_{\ell-1}^{\ell-1}).\end{cases}$$
\end{lemma}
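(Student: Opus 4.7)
The plan is to separate the parity of $\chi$ (its restriction to $\F_q^*$) from its local behavior at $P_\infty$, and in both the Kummer and non-Kummer cases deduce $\chi(P_\infty) = 1$ whenever unramified by showing that the associated $\ell$-cyclic cover of $\mathbb{P}^1_{\F_q}$ splits completely at $P_\infty$.

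First I would handle the non-Kummer case $q \not\equiv 1 \bmod \ell$. Since $\gcd(\ell, q-1)=1$, the group $\F_q^*$ carries no non-trivial character of order $\ell$, so $\chi|_{\F_q^*} = 1$ and $\chi$ is even. The same coprimality together with $p \neq \ell$ rules out any $\ell$-th order ramification at $P_\infty$ (whose residue field is $\F_q$ and whose wild inertia is pro-$p$), so $\chi$ is unramified there. Moreover, for non-trivial $\chi$ with non-trivial conductor, the associated degree-$\ell$ cyclic extension of $\F_q(t)$ cannot be a constant field extension (which would be unramified everywhere), so its constant field remains $\F_q$; the cover therefore splits completely at $P_\infty$, Frobenius acts trivially, and $\chi(P_\infty) = 1$.

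Next, in the Kummer case $q \equiv 1 \bmod \ell$, I would compute $\chi|_{\F_q^*}$ explicitly from \eqref{def-chiP}. Fix a generator $g$ of $\F_q^*$ and put $\omega := \Omega(g^{(q-1)/\ell}) \in \mu_\ell$. For $c = g^k \in \F_q^*$ and any monic prime $P$, using $c^{q-1}=1$ together with the congruence $(q^{\deg P}-1)/\ell \equiv (q-1)/\ell \cdot \deg P \pmod{q-1}$ (which follows from $1 + q + \cdots + q^{\deg P - 1} \equiv \deg P \pmod{q-1}$) gives $\chi_P(c) = \omega^{k \deg P}$. Substituting into $\chi = \chi_{F_1}\chi_{F_2}^2 \cdots \chi_{F_{\ell-1}}^{\ell-1}$ and using $\deg(F_1 F_2^2 \cdots F_{\ell-1}^{\ell-1}) = \sum_{j=1}^{\ell-1} j \deg F_j$ yields
\begin{equation*}
\chi(c) = \omega^{k \deg(F_1 F_2^2 \cdots F_{\ell-1}^{\ell-1})},
\end{equation*}
which is trivial for every $k$ precisely when $\ell \mid \deg(F_1 F_2^2 \cdots F_{\ell-1}^{\ell-1})$. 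This establishes the parity criterion in the Kummer case.

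To close the Kummer case, I would identify parity with ramification at $P_\infty$ through the cyclic cover $y^\ell = f$, where $f := F_1 F_2^2 \cdots F_{\ell-1}^{\ell-1}$. If $\ell \mid \deg f$, setting $z := y / t^{\deg f / \ell}$ gives $z^\ell = f/t^{\deg f} = 1 + O(1/t)$, so the cover is split at $\infty$ (with $z$ reducing to an $\ell$-th root of unity already in $\F_q$), Frobenius is trivial, and $\chi(P_\infty) = 1$. If $\ell \nmid \deg f$, the local equation $y^\ell = t^{\deg f}(1 + O(1/t))$ exhibits total tame ramification at $\infty$, so $P_\infty$ divides the conductor of $\chi$ and $\chi(P_\infty) = 0$ by the standard convention. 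The main technical step is the non-Kummer Frobenius computation at $P_\infty$, which rests on the structural fact that an $\ell$-cyclic cover with non-trivial finite conductor cannot be a constant field extension; the Kummer side is then routine from the explicit model of the cover.
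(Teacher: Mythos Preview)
Your parity and ramification arguments are correct and close in spirit to the paper's (the paper computes $\chi_P(a)$ for $a\in\F_q^*$ directly from \eqref{def-chiP} in both cases, while you use the abstract observation $\ell\nmid |\F_q^*|$ in the non-Kummer case; both are fine). Your Kummer-case treatment of $P_\infty$ via the explicit model $z^\ell=f/t^{\deg f}=1+O(1/t)$ is correct and in fact more explicit than the paper, which simply invokes monicity.

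There is, however, a genuine gap in the non-Kummer step ``its constant field remains $\F_q$; the cover therefore splits completely at $P_\infty$.'' The implication fails. Let $\chi_{\mathrm{const}}$ be the order-$\ell$ character attached to the constant extension $\F_{q^\ell}(t)/\F_q(t)$; it is everywhere unramified, even, and $\chi_{\mathrm{const}}(P_\infty)$ is a primitive $\ell$-th root of unity. Then $\chi':=\chi_F\cdot\chi_{\mathrm{const}}$ is an even order-$\ell$ character with the same finite conductor as $\chi_F$, the associated cyclic cover still has constant field $\F_q$ (it is ramified along $F$, hence not a constant extension, and $\ell$ is prime), yet $\chi'(P_\infty)\neq 1$ and $P_\infty$ is inert. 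So knowing only that the constant field is $\F_q$ cannot force splitting at $P_\infty$. What singles out the characters $\chi_F$ of the lemma among all such Hecke characters is precisely that they are built from residue symbols with \emph{monic} $F$, and this is what forces $\chi_F(P_\infty)=1$; the paper's (brief) justification ``since $F_1F_2^2\cdots F_{\ell-1}^{\ell-1}$ is monic'' is pointing at exactly this. To repair your argument you should use monicity in the non-Kummer case as well, e.g.\ via reciprocity for the residue symbol or via the explicit models of Section~\ref{explicit-BSM} together with Lemma~\ref{lem:dividibility}, rather than the constant-field observation alone.
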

\begin{proof} 
We first discuss under which conditions the characters are odd or even.  
Let $P$ be an $n_q$-divisible prime. 
We remark that for  $a \in \F_q^*$, 
\begin{equation}\label{eq:chalpha}\chi_P(a)=\Omega\left(a^{\frac{q^{\deg (P)}-1}{\ell}}\right)=\Omega\left(a^{\frac{\deg(P)(q^{n_q}-1)}{n_q \ell}}\right).\end{equation}
Indeed, writing $\deg(P)=n_qk$, we have
\[\frac{q^{\deg (P)}-1}{\ell}=\frac{q^{n_q k}-1}{\ell}=\frac{q^{n_q}-1}{\ell}(1+q^{n_q}+\cdots+q^{n_q(k-1)})\]
and we use the fact that $1+q^{n_q}+\cdots+q^{n_q(k-1)} \equiv k \bmod{\ell}$. 

Then by applying multiplicativity to equation \eqref{eq:chalpha}, we find 
\[\chi_F(a)=\Omega\left(a^{\frac{\deg(F_1F_2^2\cdots F_{\ell-1}^{\ell-1})(q^{n_q}-1)}{n_q\ell}}\right),\]
If $n_q=1$, then $\chi$ is trivial on $\F_q$ iff $\ell \mid \deg(F_1F_2^2\cdots F_{\ell-1}^{\ell-1})$. 

Now suppose that $n_q>1$. Then, $\ell \nmid (q-1)$, and in fact,
$(\ell, q-1)=1$ since $\ell$ is prime. Now we have that both $\ell \mid (q^{n_q}-1)$ and $(q-1)\mid (q^{n_q}-1)$. It follows that  $(q-1)\mid \frac{q^{n_q}-1}{\ell}$. Since $a \in \F_q^*$, we have
\[\chi_F(a)=\Omega\left(a^\frac{\deg(F_1F_2^2\cdots F_{\ell-1}^{\ell-1})(q^{n_q}-1)}{n_q\ell}\right)=1,\] and therefore $\chi_F$ is an even character. 

The statement that $P_\infty$ does not  ramify in the non-Kummer case follows from the fact that the cyclic field extension associated to $\chi_F$ can only ramify at primes of degree divisible by $n_q>1$ and $P_\infty$ is a prime of degree $1$.  In the Kummer case, the character $\chi_F$ is associated with the cyclic cover $y^\ell = F_1 F_2^2 \cdots F_\ell^{\ell-1}$, and there is ramification at $P_\infty$ iff $\ell \nmid \deg(F_1F_2^2\cdots F_{\ell-1}^{\ell-1})$, and 
$\chi_F(P_\infty)=0$ in this case. If $\chi_F$ does not ramify at $P_\infty$, then 
 $\chi_F(P_\infty)=1$ since we are only considering the case in which $F_1F_2^2\cdots F_{\ell-1}^{\ell-1}$ is monic. 
\end{proof}

\subsection{$L$-functions of Dirichlet characters}

Let $\chi$ be a Dirichlet character, and let $\mathcal{L}(\chi, u)$ be the Dirichlet $L$-function defined by 
\[\mathcal{L}(\chi,u)=\prod_P (1-\chi(P)u^{\deg P})^{-1},\]
where the product includes the prime at infinity.
 
 We define $\delta_\chi$ by
\begin{equation} \label{def-delta-chi}
\delta_\chi := \begin{cases} 0 &  \mbox{when $\chi$ is even,} \\ 1 &  \mbox{when $\chi$ is odd,} \end{cases} \end{equation}
and we remark from Lemma  \ref{leeme-section2} that $\chi(P_\infty) =1-\delta_\chi$. 

For a primitive character $\chi$ of conductor $F$, it follows from the work of Weil \cite{Weil2} that  $\cL(\chi, u)$ is a polynomial of degree ${\deg(F)-2 + \delta_\chi}$ and 
 satisfies the functional equation 
 \begin{align} \label{FE-Lchi}
\cL(\chi, u) = \omega_\chi \;(\sqrt{q}u)^{\deg(F)-2 + \delta_\chi} \;\cL(\overline{\chi}, 1/(qu)). \end{align}
The sign of the functional equation is
$$
\omega_\chi = \begin{cases} \frac{G(\chi)}{|G(\chi)|}  & \text{when $\chi$ is even}, \\     \\
 \frac{\sqrt{q}}{\tau(\chi)}  \frac{G(\chi)}{|G(\chi)|}  & \text{when $\chi$ is odd},
\end{cases}
$$
where  if 
$\chi$ odd, 
$$
\tau(\chi) = \sum_{a \in \F_q^*} \chi(a) e^{2 \pi i \text{tr}_{\F_q/\F_p}(a)/p},
$$
and for any $\chi$, $G(\chi)$ is the Gauss sum
$$
G(\chi) = \sum_{a \bmod F} \chi(a) e_q\left( \frac{a}{F} \right).$$
Here $e_q$ is the exponential defined by Hayes \cite{hayes} for any $b \in \mathbb{F}_q((1/T))$:
\[e_q(b) = e^{\frac{2 \pi i \text{tr}_{\F_q/\F_p}(b_1) }{p}},\] 
where $b_1$ is the coefficient of $1/T$ in the Laurent expansion of $b$. We refer the reader to \cite{DFL} for a proof of those results.

\subsection{$L$-functions of elliptic curves over $\F_q(t)$}
Let $E$ be an elliptic curve over $\F_q(t)$. Let $P$ be a prime of $\F_q(t)$, i.e $P=P(t) \in \F_q[t]$ is a monic irreducible polynomial or $P=P_\infty$, the prime at infinity. If $P$ is a prime of good reduction, then the reduction of $E$ (which we also denote by $E$) is an elliptic curve over the finite field $\F_P = \F_q[t]/(P) \cong \F_{q^{\deg{P}}}$ (where $\F_\infty \cong \F_q$ since the prime at infinity has degree 1), and
$$\# E(\F_P) = q^{\deg{P}}+1-a_P, \;\; a_P = \alpha_P + {\overline {\alpha}}_P, \;\; |\alpha_P| = \sqrt{q^{\deg P}}.$$
Let
$$\mathcal{L}_P(E, u) := 1 - a_P u + q^{\deg{P}} u^2 = (1 - \alpha_P u) (1 - \overline{\alpha}_P u)$$ be the $L$-function of $E/\F_P$.

If $P$ is a prime of bad reduction, we define
$$
 \cL_P(E, u) = (1-a_P u),
$$
where $a_P = 0, 1, -1$ depending on the type of bad reduction (additive, split multiplicative, and non-split multiplicative respectively). 

Let $N_E$ be the conductor of $E$, which is the product of the primes of bad reduction with the appropriate powers.\footnote{We emphasize that we include the prime at infinity in the conductor of the elliptic curve (if the curve has bad reduction at infinity of course). Our conductor is an effective divisor, written multiplicatively.} 
Let $M_E$ (respectively $A_E$) be the product of the multiplicative (respectively additive) primes of $E$. Then $N_E= M_E A_E^2$. 


The $L$-function of $E$ is defined by
\begin{equation}\label{L-function}
\cL(E,u):= \prod_{P \nmid N_E} \cL_P(E, u^{\deg{P}})^{-1}  \prod_{P\mid N_E} \cL_P(E, u^{\deg{P}})^{-1}.
\end{equation}

It is proven by Weil \cite{Katz02,BaigHall} that $\cL(E,  u)$ is a polynomial of degree\footnote
{The formula for the degree of $\cL(E,  u)$ implies in particular that there are no non-constant elliptic curves over $\F_q(t)$ with conductor of degree smaller than 4, which can be thought of as the analogue to the fact that there are no elliptic curves over $\Q$ with conductor smaller than $11$. }  $\deg{N_E} - 4$ for any non-constant elliptic curve defined over the rational function field
 $\mathbb{F}_q(t)$ and it satisfies the functional equation
\begin{equation}\label{eq:funtceqE}
\cL(E,  u) = \omega_E \;(qu)^{\deg(N_E)-4} \cL(E, 1/(q^2u)),
\end{equation}
where $\omega_E = \pm 1$ is the sign of the functional equation.
We refer the reader to \cite[Appendix]{Brumer}  and \cite{BaigHall} for more details.  

Let $\chi$ be a Dirichlet character of order $\ell$ and conductor $F$, and suppose that $(F, N_E)=1$.  If $\chi$ is odd, we also assume that $E$ has good reduction at $P_\infty$ (since the prime at infinity is not included in the conductor of the Dirichlet character, we need this additional condition to ensure that the places where $\chi$ ramifies and the places of bad reduction for $E$ are disjoint).
The $L$-function of $E$ twisted by $\chi$ is defined by
\begin{align} \label{L-E-chi} \cL(E, \chi,u) &:= \prod_{P\nmid N_E} (1-\chi(P)\alpha_P  u^{\deg(P)})^{-1}(1-\chi(P)\overline{\alpha}_P  u^{\deg(P)})^{-1} \nonumber \\
&\;\;\;\;\; \times \prod_{P\mid N_E} (1-\chi(P)a_P u^{\deg(P)})^{-1}.\end{align}
Let $K$ be the cyclic field extension of degree $\ell$ of $\F_q(t)$ corresponding to $\chi$. Then, 
\begin{align}\label{Artin-conjecture-1}
\cL(E / K,u)  = \cL(E,u)\prod_{i=1}^{\ell-1}\cL(E,\chi^i,u). \end{align}
It follows from the Riemann Hypothesis that
$$
\cL(E / K, u) =  \prod_{j=1}^{B}  \left(1-qe^{i\theta_{j}}u\right).
$$
Since $(F_\chi,N_E)=1$ and $E$ has good reduction at $P_\infty$ when $\chi$ is odd,  \eqref{Artin-conjecture-1} and Theorem \ref{thm-sign-FE} (stated and proven below)  imply that $B=\ell ( \deg{N_E} - 4) + 2 (\ell-1)
(\deg{F} +  \delta_\chi).$

It is well-known that $\cL(E, \chi, u)$ satisfies a functional equation from the work of Weil \cite{Weil2}. The explicit formula for the sign of the functional equation is contained in \cite{Weil2} in a very general context, but we need a precise formula for the numerical computations, so we deduce it below from the work of Tan and Rockmore \cite{Tan, TanRockmore}.

\begin{theorem} \label{thm-sign-FE} Let $\ell$ be a prime, $\chi$ a primitive Dirichlet character of conductor $F$ and order $\ell$, and let $E$ be a non-constant elliptic curve with conductor $N_E$ such that $(N_E, F)=1$.
If $P_\infty \mid N_E$, we also assume that $\chi$ is even.
The $L$-function $\cL(E,\chi,u)$ is a polynomial of degree
\begin{equation*}
\degL := {\deg{N_E} + 2 \deg{F} - 4 + 2 \delta_\chi},
\end{equation*}
where $\delta_\chi$ is given by \eqref{def-delta-chi}. Each $\cL(E,\chi,u)$
satisfies the functional equation
\begin{equation} \label{FE-E-chi}
\cL(E, \chi, u) = \omega_{E \otimes \chi} \; (qu)^{\degL} \; \cL(E, \overline{\chi}, 1/(q^2u)),
\end{equation}
where $\omega_{E \otimes \chi}$ is the sign of the functional equation for 
$\cL(E, \chi, u)$, given by 
$$\omega_{E \otimes \chi} =  \omega_\chi^2 \, \omega_E \,\chi(N_E).$$
\end{theorem}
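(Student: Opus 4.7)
The plan is to split the proof into two parts, establishing first the degree of $\cL(E,\chi,u)$ and then the sign of its functional equation, since the existence of such a functional equation together with the fact that $\cL(E,\chi,u)$ is a polynomial follows from Weil's cohomological interpretation of the twisted $L$-function as the characteristic polynomial of Frobenius on $H^1(\mathbb{P}^1_{\overline{\F}_q}, j_*\mathcal{F})$, where $\mathcal{F}$ is the smooth $\ell$-adic sheaf associated to $\rho_E \otimes \chi$ on the complement of $N_E F$ and $j$ is the open immersion.

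For the degree, I would apply the Grothendieck-Ogg-Shafarevich formula to $\rho_E \otimes \chi$ on $\mathbb{P}^1_{\F_q}$, which for a representation $\rho$ of dimension $d$ with conductor divisor $\mathfrak{N}(\rho)$ gives $\deg \cL(\rho,u) = \deg \mathfrak{N}(\rho) + d(2g-2) = \deg \mathfrak{N}(\rho) - 2d$ since $g=0$. The hypothesis $(N_E, F) = 1$ ensures that at any finite place $v$ at most one of $\rho_E$ or $\chi$ is ramified, so the local conductor exponent of $\rho_E \otimes \chi$ equals $a_v(\rho_E)$ at places $v \mid N_E$ and $2\, a_v(\chi) = 2$ at places $v \mid F$ (using that $\chi$ is tamely ramified of order $\ell$ coprime to $p$ and $\rho_E$ is unramified and $2$-dimensional there). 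At $P_\infty$, the hypothesis guarantees that the ramified factor, if any, is $\chi$, contributing $2\delta_\chi$. Summing yields $\deg \mathfrak{N}(\rho_E \otimes \chi) = \deg N_E + 2\deg F + 2\delta_\chi$, and subtracting $4 = 2\dim \rho_E$ gives the claimed $\degL$.

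For the sign, the root number decomposes as a product of local epsilon factors $\omega_{E\otimes\chi} = \prod_v \varepsilon_v(\rho_E \otimes \chi, \psi_v)$ with respect to a fixed additive character $\psi = \bigotimes \psi_v$ of the adeles. Again the coprimality condition ensures that each local factor reduces to one of two controllable shapes: at places $v \mid N_E$ (where $\chi$ is unramified of exponent $a_v(\chi) = 0$), Deligne's compatibility gives $\varepsilon_v(\rho_E \otimes \chi,\psi_v) = \chi(\pi_v)^{a_v(\rho_E)}\, \varepsilon_v(\rho_E,\psi_v)$, and the product of these is $\chi(N_E)\,\omega_E$ (up to unramified local contributions which telescope to $1$); at places $v \mid F$ (where $\rho_E$ is unramified), the standard formula $\varepsilon_v(\rho_E \otimes \chi,\psi_v) = \varepsilon_v(\chi,\psi_v)^{\dim \rho_E}\, \det(\rho_E)(\varpi_v^{a_v(\chi)})$ applies, and since $\det \rho_E$ is the cyclotomic character the determinant contribution disappears into the normalization used to define $\omega_\chi$, leaving $\omega_\chi^2$. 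The contribution at $P_\infty$ is trivial when $\chi$ is even (since $\rho_E \otimes \chi$ is unramified there) and handled symmetrically in the case $\chi$ odd using that $E$ has good reduction at infinity.

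The main obstacle is bookkeeping the normalizations: the quantities $\omega_E$, $\omega_\chi$, and $\omega_{E\otimes\chi}$ are defined as signs of polynomial functional equations in $u$, whereas local epsilon factors naturally live in the adelic language with a chosen additive character, and powers of $q$ and of $\det \rho_E$ at each ramified place must be carefully tracked so that they cancel against the normalizing factors $\sqrt{q}/\tau(\chi)$ and $G(\chi)/|G(\chi)|$ appearing in the definition of $\omega_\chi$. To avoid re-deriving this dictionary from scratch, I would follow the explicit calculation of root numbers for twisted $L$-functions over function fields carried out by Tan \cite{Tan} and Tan-Rockmore \cite{TanRockmore}, which produces exactly the product formula $\omega_{E\otimes\chi} = \omega_\chi^2\,\omega_E\,\chi(N_E)$ after the $\det \rho_E$ contributions are absorbed into $\omega_\chi^2$.
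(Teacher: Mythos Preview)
Your approach is correct in outline but genuinely different from the paper's. You work on the Galois side: the degree via the Grothendieck--Ogg--Shafarevich formula for the sheaf attached to $\rho_E\otimes\chi$, and the sign via the product of local $\varepsilon$-factors together with Deligne's twist formulas at places dividing $N_E$ and $F$ respectively. The paper instead works on the automorphic side, deducing everything from modularity: it takes the cuspidal modular function $f$ attached to $E$, uses Tan's modular symbols $\Theta_{f,D}$ and the Atkin--Lehner involution $w_{N_E}$ to derive the functional equation for $L_f(\chi,s)$ directly, and then identifies the resulting Gauss-sum ratio $\overline{\tau_\chi}/|D|^{1/2}$ with $\omega_\chi$. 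What your route buys is conceptual transparency (the degree formula drops out of GOS in one line, and the sign visibly factors as a product of local contributions), whereas the paper's route is more self-contained and avoids the delicate normalization dictionary between Deligne $\varepsilon$-factors and the polynomial-in-$u$ signs $\omega_E,\omega_\chi$. One caution: your final paragraph defers this very bookkeeping to Tan and Tan--Rockmore, but those references carry out the \emph{modular-symbols} computation (i.e.\ the paper's argument), not an $\varepsilon$-factor computation, so citing them does not close your argument as written; you would need instead a reference that matches local $\varepsilon$-factors to the function-field Gauss sums, or carry out that matching explicitly.
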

\begin{proof}
The sign of the functional equation (and the functional equation itself) can be deduced from the modularity of elliptic curves over function fields.
We follow \cite{Tan, TanRockmore} who use modular symbols over function fields. They consider different normalizations, so we explain here how to adjust their work to get the result that we need. Let $K=\F_q(t)$. 
For any place $v$, let $\mathcal{O}_v$ be the associated ring of integers.  If $N=\sum_v N_v v$ is an effective divisor over $K$, let 
\[\Gamma_0(N)=\left \{\left(\begin{array}{cc}a & b\\c & d \end{array}\right)=\left(\left(\begin{array}{cc}a_v & b_v\\c_v & d_v \end{array}\right)\right)_v \in \prod_v \mathrm{GL}_2(\mathcal{O}_v) : c \equiv 0 \bmod{N} \right\}.\]

Let $\mathbb{A}_K$ be the ring of adeles over $K$. Then $\mathbb{A}_K^*$ embeds in $\mathrm{GL}_2(\mathbb{A}_K)$ as diagonal matrices. Also  $\mathrm{GL}_2(K)$ embeds in  $\mathrm{GL}_2(\mathbb{A}_K)$  by the diagonal map.

A $\C$-valued function on $\mathrm{GL}_2(\mathbb{A}_K)$ is called a modular function of level $N$ if it satisfies that $f(\gamma \tau \kappa)=f(\tau)$ for all $\tau \in \mathrm{GL}_2(\mathbb{A}_K)$, $\gamma \in \mathrm{GL}_2(K)$, and $\kappa \in \mathbb{A}_K^* \cdot\Gamma_0(N)$. It is a fundamental result that if $E$ is a non-constant elliptic curve over $K$, then there is a normalized cuspidal
modular function $f$ of level $N_E$ such that the $L$-function of $E$ is the $L$-function of $f$. This also holds for the twisted $L$-functions. To make that statement precise, and use it to get the functional equation, we will follow the notation of \cite{Tan, TanRockmore}, where the $L$-functions are normalized differently (and we will go back to our $L$-function at the end).
Let $f$ be the normalized cuspidal modular function corresponding to $E$, $\chi$ a Dirichlet character of conductor coprime to $N_E$  and we define as \cite[(1.10)]{Tan}
$$
L_f(\chi, s) = \sum_{M} \frac{ c_f(M) \chi(M)}{|M|^{s-1}},
$$
where $M$ runs through all effective divisors, $\chi$ is naturally extended over effective divisors, and the $c_f(M)$ are the normalized coefficients obtained from the Fourier expansion of $f$. 
This is also true when $\chi$ is a quasi-character, which for our purposes is the product of  a Dirichlet character and a map $\chi_s$ given by 
$
\chi_s (M) = |M|^{-s}.$

We now use the modular symbols $\Theta_{f,D}$ to get the functional equation. The modular symbols $\Theta_{f,D}$ are elements of the group ring $R[W_D]$, where 
$W_D = K^* \backslash \mathbb{A}_K^*/U_D$ is the Weil group of a divisor $D$ of $K$, and $R$ is a ring containing all the Fourier coefficients of $f$.
We refer to \cite{Tan} for all the relevant definitions. The modular symbols are used to interpolate special values of the twisted $L$-functions, and we have \cite[Proposition 2]{Tan}, 
\begin{equation} \label{eq:prop2} L_f(\chi,1)=\tau_\chi^{-1} \chi(\Theta_{f,D}),\end{equation} where $\tau_\chi$ is a Gauss sum.
Using quasi-characters, we also have
\begin{equation}\label{eq:prop2-gen}
L_f(\chi,s)=L_f(\chi \chi_{s-1},1)=\tau_{\chi\chi_{s-1}}^{-1} \; (\chi \chi_{s-1})(\Theta_{f,D}).
\end{equation}
 Using the Atkin--Lehner involution $w_{N_E}$, we have when $(D, N_E)=1$ (including at $P_\infty$) \cite[Proposition 3]{Tan}
\begin{equation} \label{eq:prop3}
\Theta_{f,D} = \Theta^{t}_{w_{N_E}(f),D} \; N_E,
\end{equation}
where $t$ is the involution on $R[W_D]$ sending $\sum_{w\in W_D} a_w w$ to $\sum_{w\in W_D} a_w w^{-1}$. 

Applying a quasi-character $\chi$ to  $\Theta=\sum_{w\in W_D} a_w w$ results in $\chi(\Theta)=\sum_{w\in W_D} a_w \chi(w)$, while applying  $\chi$ together with the involution $t$ results in $\chi(\Theta^t)=\sum_{w\in W_D} a_w \chi^{-1}(w)=\chi^{-1}(\Theta)$.

\kommentar{Proposition 3 from Tan is 
\begin{equation}\label{eq:prop3}
\Theta_{f,D}=\Theta_{w_Nf,D}^t N.
\end{equation}
[[[[I've spent a lot of time trying to understand the above, I think the point is (from the proof of Proposition 3, last identity in the proof):
\[\Theta_{w_Nf,D}^t=\sum_w w_Nf\left(\left(\begin{array}{cc}dw & w\\0&1 \end{array} \right)\right) w^{-1}=\sum_w f\left(\left(\begin{array}{cc}dNw^{-1} & Nw^{-1}\\0&1 \end{array} \right)\right) (Nw^{-1}) N^{-1}=\Theta_{f,D} N^{-1}\]

Now notice that from the proof of Proposition 2 (top of page 303), applying the operation $t$ changes $\sum \Psi(w) \chi(w)$ to $\sum \Psi(w) \chi(w^{-1})$, and eventually this goes from $\tau_\chi \chi(M)$ to $\tau_{\chi^{-1}} \chi^{-1}(M)$ so clearly this changes the quasicharacter to $\chi^{-1}$. 
    ]]]]
    }
We apply $\chi \chi_{s-1}$ to \eqref{eq:prop3}, and we combine it with \eqref{eq:prop2-gen} to get
\begin{align*}
L_f(\chi,s) =& \tau_{\chi\chi_{s-1}}^{-1} (\chi \chi_{s-1}) (\Theta_{f,D})\\=&\tau_{\chi\chi_{s-1}}^{-1}\;  (\chi \chi_{s-1}) (\Theta_{w_{N_E}(f),D}^t) \; \chi(N_E ) |N_E|^{-(s-1)}\\
=&\frac{\tau_{\chi^{-1}\chi_{1-s}}}{\tau_{\chi\chi_{s-1}}} L_{w_{N_E} (f)}(\chi^{-1}\chi_{1-s},1)\chi(N_E)  |N_E|^{-(s-1)}\\
=&\frac{\tau_{\chi^{-1}\chi_{1-s}}}{\tau_{\chi\chi_{s-1}}} L_{w_{N_E}( f)}( \chi^{-1},2-s)\chi(N_E)  |N_E|^{-(s-1)}.
\end{align*}
The third line above follows from using \eqref{eq:prop2} with $f$ replaced by $w_{N_E} (f)$ and $\chi\chi_{s-1}$ replaced by $(\chi\chi_{s-1})^{-1}$, together with the observation that the involution $t$ has the effect of inverting the character.   
Using the fact that $f$ is an eigenvector for the self-dual Atkin--Lehner operator, we have $w_{N_E}(f )=  \omega_E f$, where $\omega_E = \pm 1$ is the sign of the functional equation \eqref{eq:funtceqE}, and then $L_{w_{N_E}( f)}( \chi^{-1},2-s) = \omega_E L_f ( \chi^{-1},2-s)$.

To compute the Gauss sums associated with the quasi-characters, we use \cite[(2.2.3)]{TanRockmore}
\begin{equation*}
\tau_{\chi \chi_s} = q^{s (\deg{D}-2)} \tau_\chi,
\end{equation*}
where $\tau_\chi$ is the  Gauss sum of the Dirichlet character $\chi$ of conductor $D$. Replacing above, this gives
\begin{equation}\label{theabove}
\tau_\chi L_f(\chi,s)= \omega_E \tau_{\chi^{-1}} \chi(N_E) q^{(1-s)(\deg(N_E)+2\deg(D)-4)} L_{f}( \chi^{-1},2-s),
\end{equation}
where \cite[(3.4)]{Tan} is a particular case (for $s=1$). 
The twisted $L$-function of the elliptic curve is given by 
$$L(E, \chi, s) = \sum_{M} \frac{c_f(M)|M| \chi(M) }{|M|^s} = \cL(E, \chi, u)$$
for $u = q^{-s}$. The functional equation can be obtained by noticing that 
$L_f(\chi, s) = L(E, \chi, s)$, and replacing in \eqref{theabove}. This leads to
$$
\tau_\chi L(E, \chi,s)= \omega_E  \tau_{\chi^{-1}} \chi(N_E) q^{(1-s)(\deg(N_E)+2\deg(D)-4)} L(E, \chi^{-1},2-s).
$$
Using $u=q^{-s}$, we finally get
\begin{equation} \label{Tan-FE}
\cL(E, \chi,u)= \omega_{E \otimes \chi} (qu)^{(\deg(N_E)+2\deg(D)-4)} \cL(E, \chi^{-1}, 1/(q^2u)),
\end{equation}
where 
\begin{equation*} 
\omega_{E \otimes \chi} = \left( \frac{\overline{\tau_\chi}}{|D|^{1/2}} \right)^2 \omega_E \chi(N_E) .
\end{equation*}
In order to get exactly the statement of the theorem, we need to take into account the difference of notation between \cite{Tan} and this paper.
When $\chi$ is odd and there is ramification at $P_\infty$,  the conductor $D$ of \eqref{Tan-FE} is $P_\infty D'$, where $D' \in \F_q[t]$, and so $D'$ is the definition of the conductor in this paper. Adjusting the formula to make it compatible with our notation, we get for all cases
$$\cL(E, \chi,u)= \omega_{E \otimes \chi} (qu)^{(\deg(N_E)+2\deg(D)-4 + 2 \delta_\chi )} \cL(E, \chi^{-1}, q^2 u^{-1}) ,$$
which is the functional equation \eqref{FE-E-chi}.
Finally, we remark that $\frac{\overline{\tau_\chi}}{|D|^{1/2}}$ is by definition the sign of the functional equation of $\cL(\chi, u)$, since it is the product of the same local Gauss sums because $(D, N_E)=1$,
and we have $\omega_{E \otimes \chi} = \omega_\chi^2 \omega_E \chi(N_E)$.

\end{proof}

\begin{remark} When $E$ is a constant elliptic curve, we prove in the next section that $\cL(E, \chi, u)$ satisfies the same functional equation with $\degL=2 \deg{F} - 4 + 2 \delta_\chi$ and 
$\omega_{E \otimes \chi} = \omega_\chi^2$. This is consistent with the fact that such $E$ has good reduction at all primes of $K$, and therefore $N_E=0$. \end{remark}

\section{$L$-functions of constant elliptic curves over $\F_q(t)$} \label{section-constant}

By class field theory, Dirichlet characters of order $\ell$ over $\F_q(t)$ correspond to cyclic extensions $K/\F_q(t)$ of order $\ell$, where $K = \F_q(C)$ is the function field of a projective smooth curve $C$ defined over $\F_q$.
We call such a curve a $\ell$-cyclic cover of $\mathbb{P}^1_{\F_q}$, or  simply a $\ell$-cyclic cover.

Let $C$ be a 
$\ell$-cyclic cover of $\mathbb{P}^1_{\F_q}$ of genus $g$, and let $K = \F_q(C)$ be the corresponding extension of $\F_q(t)$. The zeta function of $C$ can be expressed as
\begin{align} \label{eq:betas}
\cZ(C, u) &= \cZ(u) \mathcal{L}(C, u) =
\frac{\displaystyle \prod_{j=1}^{2g} (1 - \beta_j u)}{(1-u)(1-qu)}, 
\end{align}
 where $|\beta_j| = q^{1/2}$ for $1 \leq j \leq 2g$, and
 \[\cZ(u)=\frac{1}{(1-u)(1-qu)}.\]
 We also have 
 \begin{align*}
  \mathcal{L}(C, u) = \prod_{i=1}^{\ell-1} \cL (\chi^i, u), 
 \end{align*}
 where the $\chi^i$ are the characters of order $\ell$ associated to the extension $K/\F_q(t)$.

Let $E_0$ be an elliptic curve over $\F_q$ with $L$-function
$$\mathcal{L}(E_0, u) =  (1 - \alpha_1 u) (1 - {\alpha}_2 u) .$$

\begin{theorem} \label{thm-ulmer}
Let $E = E_0 \times_{\F_q} \F_q(t)$, and let $C, K$ and $\alpha_1, \alpha_2$, and the $\beta_j$'s be as above. Then, 
\begin{align}\label{L-function-over-k(C)-1} 
\mathcal{L}(E/K, u) &=   \cZ(C, \alpha_1 u) \cZ(C, \alpha_2 u) \\ \nonumber
& =\frac{\displaystyle \prod_{\substack{1\leq i \leq 2\\ 1 \leq j \leq 2 g}} (1 -  \alpha_i \beta_j u)}{\displaystyle \prod_{1 \leq i \leq 2} (1-\alpha_i u)(1- \alpha_i q u)}.
\end{align}
Moreover, 
$\mathcal{L}(E, \chi, u) = \cL(\chi, \alpha_1 u) \cL(\chi, \alpha_2 u),$
and writing
\begin{align*} 
\mathcal{L}(\chi, u) =  \prod_{1 \leq j \leq 2 g/(\ell-1)} (1 - \gamma_j u),
\end{align*}
then
$$
\mathcal{L}(E, \chi, u) =    \prod_{\substack{1\leq i \leq 2\\ 1 \leq j \leq 2 g/(\ell-1)}} (1 -  \alpha_i \gamma_j u).
$$
\end{theorem}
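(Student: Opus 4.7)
\bigskip

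\noindent\textbf{Proof plan for Theorem \ref{thm-ulmer}.}

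The key point is that $E = E_0 \times_{\F_q} \F_q(t)$ has good reduction at every place of $\F_q(t)$, so that $N_E$ is trivial and every local factor is of the good-reduction type. Moreover, for any prime $P$ of $\F_q(t)$ of degree $d$, the residue field is $\F_{q^d}$ and, because $E$ is constant, the reduction at $P$ is $E_0 \times_{\F_q} \F_{q^d}$. Thus the Frobenius eigenvalues at $P$ are precisely $\alpha_1^{d}$ and $\alpha_2^{d}$, and the local factor is
\begin{equation*}
\mathcal{L}_P(E,u^{\deg P}) = (1-\alpha_1^{\deg P} u^{\deg P})(1-\alpha_2^{\deg P} u^{\deg P}).
\end{equation*}
This is the one identity that makes everything work; the rest is a matter of recognising the resulting Euler products.

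Starting from the Euler product \eqref{L-E-chi} for the twisted $L$-function and substituting the above expression, the first step is to observe that $(\alpha_i u)^{\deg P} = \alpha_i^{\deg P} u^{\deg P}$, so
\begin{equation*}
\cL(E,\chi,u) = \prod_{P} (1 - \chi(P)(\alpha_1 u)^{\deg P})^{-1}(1-\chi(P)(\alpha_2 u)^{\deg P})^{-1} = \cL(\chi,\alpha_1 u)\,\cL(\chi,\alpha_2 u),
\end{equation*}
which is the second formula of the theorem; substituting the factorisation $\cL(\chi,u) = \prod_j (1-\gamma_j u)$ gives the product form with the $\alpha_i \gamma_j$. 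The same substitution trick applied to the trivial character yields $\cL(E,u) = \cZ(\alpha_1 u)\cZ(\alpha_2 u)$, since the zeta function of $\mathbb{P}^1_{\F_q}$ is precisely $\cZ(u) = ((1-u)(1-qu))^{-1}$.

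For the first formula, the plan is to apply the Artin factorisation \eqref{Artin-conjecture-1}:
\begin{equation*}
\cL(E/K,u) = \cL(E,u)\prod_{i=1}^{\ell-1}\cL(E,\chi^i,u),
\end{equation*}
and to use the identities just established to rewrite the right-hand side as
\begin{equation*}
\cZ(\alpha_1 u)\cZ(\alpha_2 u)\,\prod_{i=1}^{\ell-1}\cL(\chi^i,\alpha_1 u)\cL(\chi^i,\alpha_2 u) = \cZ(\alpha_1 u)\,\cL(C,\alpha_1 u)\cdot \cZ(\alpha_2 u)\,\cL(C,\alpha_2 u),
\end{equation*}
where in the last step we used $\cL(C,u)=\prod_{i=1}^{\ell-1}\cL(\chi^i,u)$. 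By \eqref{eq:betas} this equals $\cZ(C,\alpha_1 u)\,\cZ(C,\alpha_2 u)$, and substituting the factorisation $\cZ(C,u)=\prod_{j=1}^{2g}(1-\beta_j u)/((1-u)(1-qu))$ produces the explicit rational form claimed.

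There is no genuine obstacle; the only thing to be careful about is that all the formal manipulations of Euler products are justified by absolute convergence in a neighbourhood of $u=0$, after which the resulting identities of rational functions hold globally by analytic continuation. One might also remark, as the authors do, that the degree $2 \deg F - 4 + 2\delta_\chi$ of $\cL(E,\chi,u)$ and the sign $\omega_{E\otimes \chi} = \omega_\chi^2$ follow from the factorisation $\cL(E,\chi,u) = \cL(\chi,\alpha_1 u)\cL(\chi,\alpha_2 u)$ combined with the functional equation \eqref{FE-Lchi} for $\cL(\chi,u)$.
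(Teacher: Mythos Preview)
Your argument is correct and follows the same key idea as the paper: the local factor at a prime $P$ of degree $d$ for the constant curve $E$ is $(1-\alpha_1^{d}u^{d})(1-\alpha_2^{d}u^{d})$, and then one recognises the Euler products for $\cZ$ and $\cL(\chi,\cdot)$ evaluated at $\alpha_i u$. The paper only writes out the special case $K=\F_q(t)$ (getting $\cL(E,u)=\cZ(\alpha_1 u)\cZ(\alpha_2 u)$) and defers the general statement to Milne and Oesterl\'e, whereas you obtain the general formula $\cL(E/K,u)=\cZ(C,\alpha_1 u)\cZ(C,\alpha_2 u)$ by combining the twisted identity $\cL(E,\chi,u)=\cL(\chi,\alpha_1 u)\cL(\chi,\alpha_2 u)$ with the Artin factorisation \eqref{Artin-conjecture-1} and $\cL(C,u)=\prod_{i=1}^{\ell-1}\cL(\chi^i,u)$; this is a perfectly valid and slightly more self-contained route to the same conclusion.
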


\begin{proof}
We refer the reader to \cite[Section 3]{Milne} and to \cite[Section 3.2]{Oesterle} for the general proof. To illustrate the ideas, we prove \eqref{L-function-over-k(C)-1} when $K = \F_q(t)$. Since
$\# E_0(\F_{q^n})=q^n+1-\alpha_1^n-\alpha_2^n$, if $P$ is a prime, then
\[\# E(\F_P)=\# E_0(\F_P) = q^{\deg(P)}+1 -\alpha_1^{\deg(P)}-\alpha_2^{\deg(P)}.\]
Since all the primes are of good reduction, we have
\begin{align*}
\cL(E/\F_q(t),u) = \cL(E, u) =&\prod_{P} \big(1-(\alpha_1^{\deg(P)}+\alpha_2^{\deg(P)}) u^{\deg(P)}+q^{\deg(P)}u^{2\deg(P)} \big)^{-1}\\
=&\prod_{P} \big(1-\alpha_1^{\deg(P)}u^{\deg(P)} \big)^{-1} \big(1-\alpha_2^{\deg(P)}u^{\deg(P)}\big)^{-1}\\
=& \frac{1}{(1-\alpha_1u)(1-q\alpha_1u) (1-\alpha_2 u)(1-q\alpha_2 u)} \\ =& \cZ(\alpha_1 u)  \cZ(\alpha_2 u) .\qedhere
\end{align*}
 \end{proof}
 
 \begin{remark} From the above result, it is easy to get the functional equation for  $\cL(E, \chi, u)$ when $E$ is a constant curve, using the functional equation of $\cL(\chi, u)$ given by \eqref{FE-Lchi}.
 Let $m = \deg_{u} \cL(\chi, u) =  2g/(\ell-1).$
 In the notation of Section \ref{section-2}, we have $m = 2g/(\ell-1) =  \deg F - 2 +  \delta_\chi$, and
 \begin{align*}
 \cL(E, \chi, u) &= \cL(\chi, \alpha_1 u) \cL(\chi, \alpha_2 u) =  \omega_\chi \, (\sqrt{q} \alpha_1 u)^m \, \cL(\overline{\chi}, 1/q \alpha_1 u) \, \omega_\chi (\sqrt{q} \alpha_2 u)^m \, \cL(\overline{\chi}, 1/ q\alpha_2 u)\\
 &= \omega_\chi^2 ( q^2  u^2)^m \cL(\overline{\chi}, \alpha_2/(q^2 u))  \cL(\overline{\chi}, \alpha_1 / (q^2 u)) \\
  &= \omega_\chi^2 \, (q u)^{2m}  \, \cL(E, \overline{\chi}, 1/(q^2u)) = \omega_\chi^2 \, (q u)^{ 2\deg F - 4 +  2\delta_\chi}  \, \cL(E, \overline{\chi}, 1/(q^2u)) 
 \end{align*}
 \end{remark}
 
 \begin{corollary} \label{coro:Li} 
 Let $E = E_0 \times_{\F_q} \F_q(t)$, and let $\chi$ be a Dirichlet character over $\F_q(t)$ with associated curve $C$ and function field $K=\F_q(C)$ respectively. Then, 
 $\cL(E/K, q^{-1})=0$ if and only if $\cL(C, \alpha_1^{-1})=\cL(C, \alpha_2^{-1})=0$,
 \end{corollary}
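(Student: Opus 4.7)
The plan is to take the product formula for $\mathcal{L}(E/K,u)$ supplied by Theorem \ref{thm-ulmer}, specialize $u=q^{-1}$, and use the fact that $\alpha_1\alpha_2=q$ together with complex conjugation to rewrite each factor as a value of $\cL(C,\cdot)$ at $\alpha_i^{-1}$.

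\smallskip

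First, I would rewrite Theorem \ref{thm-ulmer} by factoring each zeta function as $\cZ(C,u)=\cL(C,u)/\bigl((1-u)(1-qu)\bigr)$, so that
\[
\mathcal{L}(E/K,u)=\frac{\cL(C,\alpha_1 u)\,\cL(C,\alpha_2 u)}{(1-\alpha_1 u)(1-q\alpha_1 u)(1-\alpha_2 u)(1-q\alpha_2 u)}.
\]
Next I would specialize $u=q^{-1}$ and check that the denominator is nonzero: each of the four factors is either $1-\alpha_i$ or $1-\alpha_i/q$, and since $|\alpha_i|=\sqrt{q}$ with $q\ge 5$, none of $\alpha_i=1$ or $\alpha_i=q$ can occur. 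Therefore $\cL(E/K,q^{-1})=0$ is equivalent to the numerator vanishing, i.e. to $\cL(C,\alpha_1/q)\,\cL(C,\alpha_2/q)=0$.

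\smallskip

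Then I would use the relation $\alpha_1\alpha_2=q$, which comes from $\cL(E_0,u)=1-a u+qu^2=(1-\alpha_1 u)(1-\alpha_2 u)$, to rewrite $\alpha_1/q=\alpha_2^{-1}$ and $\alpha_2/q=\alpha_1^{-1}$. This immediately gives the equivalence
\[
\cL(E/K,q^{-1})=0\ \iff\ \cL(C,\alpha_1^{-1})\,\cL(C,\alpha_2^{-1})=0.
\]

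\smallskip

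The last step, which is the only point requiring a remark, is to upgrade the disjunction on the right-hand side to the conjunction stated in the corollary. Here I would observe that $\cL(C,u)\in\Z[u]$ (since the zeta function $\cZ(C,u)$ does, by Weil), so its roots come in complex conjugate pairs, and similarly $\alpha_2=\overline{\alpha_1}$ because $\cL(E_0,u)\in\Z[u]$. Consequently $\alpha_1^{-1}$ is a root of $\cL(C,u)$ if and only if $\alpha_2^{-1}$ is, so the product vanishes precisely when both factors vanish. I do not expect any substantial obstacle here; the only thing to watch is the non-vanishing of the denominator at $u=q^{-1}$, which is handled purely by the Weil bound.
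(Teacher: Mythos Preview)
Your argument is correct and follows the same route as the paper's proof: specialize the product formula of Theorem~\ref{thm-ulmer} at $u=q^{-1}$, check the denominator does not vanish, and use $\alpha_1\alpha_2=q$ to translate the numerator condition into $\cL(C,\alpha_i^{-1})=0$. The only cosmetic difference is in the last step: the paper invokes the functional equation of $\cL(C,u)$ (which pairs each inverse root $\beta_j$ with $q/\beta_j$), whereas you use that $\cL(C,u)\in\Z[u]$ so its inverse roots are closed under complex conjugation; since $|\beta_j|^2=q$ forces $q/\beta_j=\overline{\beta_j}$, these two symmetries coincide and either justification works.
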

 
 \begin{proof} From equation \eqref{L-function-over-k(C)-1} in Theorem \ref{thm-ulmer}, $\cL(E/K, q^{-1})=0$ if and only if there is one $\beta_j=q / \alpha_1 = \alpha_2$ or $\beta_j=q / \alpha_2 = \alpha_1$, where the $\beta_j$'s are given by \eqref{eq:betas}, and both $\alpha_1^{-1}$ and $\alpha_2^{-1}$ are roots of $\cL(C, u)$, because of the functional equation of $\cL(C,u)$. 
  \end{proof}

\section{Cyclic extensions of degree $\ell$ over $\F_q(t)$} \label{section-sieve}

We prove in this section the following result which extends the result of \cite{Donepudi-Li}  to general $q$ and $\ell$ (removing the restrictions
$q \equiv 1 \bmod \ell$ and  $y^\ell = F(t)$ with $\ell \mid \deg{F}$).

\begin{proposition} \label{general-counting} \label{prop-4.1} Let $\ell$ be an odd prime.
Fix an $\ell$-cyclic cover $C_0$ over $\mathbb{P}^1_{\F_q}$ with conductor of degree $d_0$.
Then there are at least $\gg q^{2n/d_0}$ $\ell$-cyclic covers $C$ over $\mathbb{P}^1_{\F_q}$ with conductor of degree bounded by $n$ admitting a non-constant map from $C$ to $C_0$.
\end{proposition}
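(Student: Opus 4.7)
My approach is to pull back $C_0$ along finite morphisms $\phi : \mathbb{P}^1_{\F_q} \to \mathbb{P}^1_{\F_q}$ and show that these pullbacks produce the required number of covers. Concretely, for a rational function $\phi(s) = h(s)/g(s) \in \F_q(s)$ of degree $k := \max(\deg h, \deg g)$ with $\gcd(h, g) = 1$, I form the fiber product
\[ C_\phi := C_0 \times_{\mathbb{P}^1_{\F_q}, \phi} \mathbb{P}^1_{\F_q}, \]
which carries a non-constant projection $C_\phi \to C_0$ and a second projection $C_\phi \to \mathbb{P}^1_{\F_q}$ that is a degree-$\ell$ cyclic cover whenever $C_\phi$ is geometrically irreducible. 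Equivalently on the character side, this is the pullback character $\chi_\phi := \phi^*\chi_0$ on $\F_q(s)$.

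The core inputs are a conductor bound and a count of the maps $\phi$. Since $C_\phi \to \mathbb{P}^1_{\F_q}$ ramifies only above the ramification locus of $C_0 \to \mathbb{P}^1_{\F_q}$, and each closed point $P$ of $\mathbb{P}^1_{\F_q}$ pulls back under $\phi$ to a divisor of total degree $k \deg P$, the conductor of $C_\phi$ has degree at most $k d_0$ (with a bounded correction near infinity). Choosing $k = \lfloor n/d_0 \rfloor$ therefore keeps the conductor at most $n$. On the counting side, the number of coprime pairs $(h, g)$ with $\max(\deg h, \deg g) = k$, modulo scaling by $\F_q^*$, is of order $q^{2k+1}$, producing $\gg q^{2n/d_0}$ rational maps. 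Two such maps give isomorphic covers only when they differ by an element of $\mathrm{Aut}(\mathbb{P}^1_{\F_q}) = \mathrm{PGL}_2(\F_q)$ of order $O(q^3)$, together with the Galois action of $\mathrm{Aut}(C_0/\mathbb{P}^1_{\F_q}) \cong \Z/\ell\Z$; these finite factors are absorbed into the $\gg$ constant.

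Next I must check geometric irreducibility and compatibility with the framework of the previous subsection. The fiber product $C_\phi$ fails to be irreducible only when $\phi$ factors through the map $C_0 \to \mathbb{P}^1_{\F_q}$, which is impossible when $g(C_0) > 0$ and contributes at most a negligible locus otherwise. In the non-Kummer situation, I also need the conductor of $\chi_\phi$ to be supported on $n_q$-divisible primes; this is automatic, since any prime $Q$ of $\F_q[s]$ lying over a prime $P$ of $\F_q[t]$ under $\phi$ satisfies $\deg P \mid \deg Q$ (the induced inclusion of residue fields is a field extension of degree the residue degree), so $n_q$-divisibility is preserved.

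The main technical obstacle is the precise conductor bookkeeping, particularly the contribution from the pullback of $P_\infty^{(t)}$. When $\chi_0$ is odd (only possible in the Kummer case), $\phi^{-1}(P_\infty^{(t)})$ is the zero locus of $g(s)$ and contributes up to $k$ extra to the conductor degree, threatening a loss of a constant factor in the exponent. Handling this cleanly---by restricting to suitable families of $\phi$ (polynomials, or rational functions with controlled behavior at infinity) and trading off $k$ against this extra term---is where I expect the delicate bookkeeping to occur. A secondary point is verifying that only a negligible proportion of the $\phi$'s collapse to the same $\chi_\phi$ or degenerate to a character of order strictly less than $\ell$; this reduces to showing that the kernel of the pullback map has small cardinality, which should follow from the primality of $\ell$ and dimension counting in the moduli of degree-$k$ maps.
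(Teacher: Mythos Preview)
Your construction---pulling back $C_0$ along rational maps $\phi: \mathbb{P}^1 \to \mathbb{P}^1$, i.e., substituting $t \mapsto u(t)/v(t)$---is exactly what the paper does (Lemma~\ref{change-of-variable}). But the paper's proof hinges on a step you relegate to a ``secondary point'': controlling when the pullback $\phi^*\chi_0$ is a genuine order-$\ell$ character with conductor of degree close to $k d_0$. Without this, nothing prevents many of your $\phi$'s from yielding degenerate or coinciding characters. The paper handles this by applying Poonen's square-free sieve over $\F_q[t]$ (Proposition~\ref{Poonen-sieve}) to the homogenized norm $G(u,v) = N_{n_q}(f_1^* \cdots f_{\ell-1}^*)(u,v)$ of the conductor of $\chi_0$: a positive proportion of pairs $(u,v)$ with $\max(\deg u,\deg v) \le n/d_0$ make $G(u,v)$ square-free, which forces the new conductor to be exactly $G(u,v)$ and the new cover to lie in the parametrizing set $\mathcal{F}_{q,\ell}$. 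Your ``dimension counting in the moduli of degree-$k$ maps'' does not substitute for this; the sieve is the main technical input, not a side issue.

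Your overcounting control via $\mathrm{PGL}_2(\F_q)$ is also incorrect. Precomposing $\phi$ by an automorphism of the source $\mathbb{P}^1$ moves the branch locus and produces a \emph{different} cover of $\mathbb{P}^1$, so there is no $\mathrm{PGL}_2$-redundancy to quotient out. Conversely, distinct $\phi_1, \phi_2$ unrelated by any automorphism can pull $\chi_0$ back to the same character (whenever $F_0^*(u_1,v_1)$ and $F_0^*(u_2,v_2)$ agree modulo $\ell$-th powers), and this is the redundancy that actually matters. The paper's fiber bound, following \cite{Donepudi-Li}, again relies on the square-free condition: once $G(u,v)$ is square-free, its roots determine enough values of the rational function $u/v$ to pin down $(u,v)$ up to at most $q^{\varepsilon n}$ choices.
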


The proof of this result is fairly long and will require several intermediate steps. 

\subsection{General $\ell$-cyclic covers over $\mathbb{P}^1_{\F_q}$}

The affine equations of $\ell$-cyclic covers over  $\mathbb{P}^1_{\F_q}$ are well-known in the Kummer case  $q \equiv 1 \bmod \ell$, which is the case  treated in \cite{Donepudi-Li}. 
In this case, such a cover $C$  over  $\mathbb{P}^1_{\F_q}$ has an affine equation
$y^\ell = F_1 F_2^2  \cdots F_{\ell-1}^{\ell-1}$, where $F_i \in \F_q[t]$ are square-free and pairwise co-prime of degree $d_i$. The conductor of the $\ell$-cyclic cover is {$ F_1 \cdots F_{\ell-1}$} and by the Riemann--Hurwitz formula, the genus of $C$ is $\frac{\ell-1}{2}(d_1+\cdots +d_{\ell-1}-2)$ if $\ell \mid (d_1 +2d_2 + \cdots + (\ell-1)d_{\ell-1})$ and $\frac{\ell-1}{2}(d_1+\cdots +d_{\ell-1}-1)$ otherwise. In this later case,   there is ramification at infinity since $\ell \nmid (d_1 +2d_2 + \cdots + (\ell-1)d_{\ell-1})$ by Lemma \ref{leeme-section2}.

To treat the general case and prove Proposition \ref{general-counting}, we use the work of Bary-Soroker and Meisner \cite{BSM}, who explicitly give the affine equations of general $\ell$-cyclic covers over  $\mathbb{P}^1_{\F_q}$. We summarize their results in this section.

As before, let $n_q$ be the multiplicative order of $q$ modulo $\ell$. As seen in Section \ref{background}, the conductors of the $\ell$-cyclic covers of  $\mathbb{P}^1_{\F_q}$ (or of Dirichlet characters of order $\ell$) are monic square-free polynomials in $\F_q[t]$ supported on $n_q$-divisible primes.  In order to count all the $\ell$-cyclic covers, or characters of order $\ell$, with such conductors, let
\begin{align*}
\mathcal{F}_{q,\ell} :=& \{ F \in \F_q[t]\;:\; F=P_1^{e_1} \cdots P_s^{e_s}, \; n_q \mid \deg{P_i}, \; 1 \leq e_i \leq \ell-1 \}, 
\end{align*}
where the $P_i$ are monic irreducible $n_q$-divisible polynomials in $\F_q[t]$.

Let $\phi_q$ be the Frobenius automorphism of $\F_q$. Then, $\phi_q$ acts on $f(t) \in \F_{q^{n_q}}[t]$ by acting on the coefficients, and we define
$$N_{n_q}(f) :=f \phi_q(f) \phi_q^2(f) \cdots \phi_{q}^{n_q-1}(f) \in \F_q[t].$$
Notice that $N_{n_q}(f)$ has degree $n_q\deg(f)$, which is always divisible by $n_q$. 

By hypothesis, each prime $P_i$ in the factorization of $F \in \mathcal{F}_{q,\ell}$ splits as a product of $n_q$ primes in $\F_{q^{n_q}}[t]$, and we can write any $F \in \mathcal{F}_{q,\ell}$ as
\begin{align} \label{decomposition}
F = \mathfrak{F}_{1}\cdots \mathfrak{F}_{n_q}, \;\; \mathfrak{F}_i \in \F_{q^{n_q}}[t], \; \phi_q(\mathfrak{F}_i)=\mathfrak{F}_{i+1} \; 1 \leq i \leq n_q-1, \; \phi_q(\mathfrak{F}_{n_q})=\mathfrak{F}_1.\end{align}

In other words, for $F \in \mathcal{F}_{q,\ell}$, $F = N_{n_q}(\mathfrak{F}_i)$ for any $i$. 
Since $\mathfrak{F}_1$ determines $\mathfrak{F}_i$ for all $i$, it suffices to work with $\mathfrak{F}_1$. Let 
\[\mathcal{F}^{(1)}_{q,\ell} = \{\mathfrak{F}_1 \in \F_{q^{n_q}}[t]: N_{n_q}(\mathfrak{F}_1)\in \mathcal{F}_{q,\ell}\}.\]
Thus,  $\mathfrak{F}_{1} \in \mathcal{F}^{(1)}_{q,\ell}$ when $F \in \mathcal{F}_{q, \ell}$. We also  have
\begin{equation}
\label{product-of-f}
\mathfrak{F}_{1} = f_1 f_2^2 \cdots f_{\ell-1}^{\ell-1},
\end{equation} where the $f_i \in \F_{q^{n_q}}[t]$ are pairwise co-prime and square-free. 

For any vector $\mathbf{v} = (v_1, \dots, v_{n_q}) \in \mathcal{V} = \{0,1,2, \dots, \ell-1 \}^{n_q}$, and any $F \in \mathcal{F}_{q,\ell}$ written as in \eqref{decomposition},  let 
$F_{\mathbf v} =\mathfrak{F}_{1}^{v_1} \cdots \mathfrak{F}_{n_q}^{v_{n_q}}$. 
For $0 \leq k \leq n_q-1$, let $\mathbf{v}_k = ([q^k]_\ell, [q^{k-1}]_\ell, \dots, [q^{k+1-n_q}]_\ell)$, where $[\alpha]_\ell \equiv \alpha \pmod{\ell}$ and $0\leq [\alpha]_\ell \leq \ell-1$, in other words,  $[\alpha]_\ell$ indicates the reduction modulo $\ell$ of $\alpha$. Thus,  we have $\mathbf{v}_k  \in \mathcal{V}$. Let $\zeta_\ell \in \F_{q^{n_q}}$ be a fixed primitive $\ell$th root of unity.
For any $F  \in \mathcal{F}_{q,\ell}$, 
let $C_{F}$ be the curve over $\F_q$ with affine model
\begin{align} \label{BSM}
C_F \;:\; \prod_{j=0}^{\ell-1} \left( y - \sum_{k=0}^{n_q-1} \zeta_\ell^{j q^k} \sqrt[\ell]{F_{\mathbf{v}_k}} \right) = 0.\end{align}
Notice that there is no canonical choice for $\sqrt[\ell]{F_{\mathbf{v}_k}}$, but the above equation is still well defined, since the factors include all the Galois conjugates.

In the Kummer case $n_q = 1$,  $F_{{\mathbf v}_0} = \mathfrak{F}_{1}=F$, and $C_F$ has affine model $y^\ell = F(t)$. 
In the case $\ell=3$ and $q \equiv 2 \bmod 3$, $F=\mathfrak{F}_{1} \mathfrak{F}_{2}$ and by \eqref{BSM}, $C_F$ has equation 
\begin{align*}
C_F : & \; \left(y- \sqrt[3]{\mathfrak{F}_{1} \mathfrak{F}_{2}^2}-\sqrt[3]{\mathfrak{F}_{1}^2 \mathfrak{F}_{2}}\right)  \;
\left(y-\zeta_3 \sqrt[3]{\mathfrak{F}_{1} \mathfrak{F}_{2}^2}-\zeta_3^2\sqrt[3]{\mathfrak{F}_{1}^2 \mathfrak{F}_{2}}\right) \\
 & \times \; \left(y-\zeta_3^2 \sqrt[3]{\mathfrak{F}_{1} \mathfrak{F}_{2}^2}-\zeta_3\sqrt[3]{\mathfrak{F}_{1}^2 \mathfrak{F}_{2}}\right) =0 \\
 \iff & y^3 - 3 \mathfrak{F}_{1}\mathfrak{F}_{2}y - \mathfrak{F}_{1} \mathfrak{F}_{2} (\mathfrak{F}_{1} + \mathfrak{F}_{2}) = 0,
\end{align*}
which is defined over $\F_q$. In general, $C_F$ is birationally equivalent to $y^\ell=F_{{\mathbf v}_0}$ over $\overline{\F}_q$. 
More explicit versions of  \eqref{BSM}  are given in Section \ref{explicit-BSM},  including a precise formula for the case $n_q=2$.

\begin{proposition}{\cite[Proposition 2.14]{BSM}}\label{Prop:monic} Let $B=\{b \in \F_{q^{n_q}}^*/  (\F_{q^{n_q}}^*)^\ell\}$. 
There is a $(\ell-1)$-to-$1$ correspondence between $\mathcal{F}_{q,\ell}\times B$ and the $\ell$-cyclic covers of $\F_q(t)$, and then a $1$-to-$1$ correspondence between $\mathcal{F}_{q,\ell}\times B$ and the characters of order $\ell$ over $\F_q(t)$.
\end{proposition}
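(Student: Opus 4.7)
The plan is to produce an explicit bijection $\mathcal{F}_{q,\ell}\times B\longleftrightarrow\{\text{characters of order }\ell\text{ over }\F_q(t)\}$ by showing that any such character factors uniquely as a primitive ramified part (indexed by $F$) times an everywhere-unramified-at-finite-primes twist (indexed by $b$), and then to invoke the Galois correspondence between characters and cyclic extensions to deduce the $(\ell-1)$-to-$1$ statement for covers. The key counting identity to confirm is that, for each fixed prime support $\{P_1,\dots,P_s\}\subset\F_q[t]$, there are exactly $(\ell-1)^s\cdot\ell$ order-$\ell$ characters of $\F_q(t)$ with finite conductor $P_1\cdots P_s$, matching the cardinality of the corresponding slice of $\mathcal{F}_{q,\ell}\times B$.

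First I would set up the map. For $F=P_1^{e_1}\cdots P_s^{e_s}\in\mathcal{F}_{q,\ell}$, define $\chi_F=\chi_{P_1}^{e_1}\cdots\chi_{P_s}^{e_s}$ exactly as in (2.3); these realize all $(\ell-1)^s$ possible combinations of non-trivial local components at the $P_i$. To attach $b\in B=\F_{q^{n_q}}^*/(\F_{q^{n_q}}^*)^\ell$, I would produce $\psi_b$ as an everywhere-unramified-at-finite-primes character of order dividing $\ell$ as follows: over $\F_{q^{n_q}}(t)$, the class of $b$ defines by Kummer theory a character of $\mathrm{Gal}(\F_{q^{n_q}}(\sqrt[\ell]{b})/\F_{q^{n_q}})$, and its descent to an order-dividing-$\ell$ character $\psi_b$ of $\F_q(t)$ is guaranteed by the Bary-Soroker--Meisner trace-like construction underlying (2.5), which manifestly defines a cyclic cover over $\F_q$. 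The candidate bijection is then $(F,b)\mapsto\chi_F\cdot\psi_b$.

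For bijectivity I would argue locally. Surjectivity: given $\chi$ of order $\ell$ with finite conductor $P_1\cdots P_s$, the local component of $\chi$ at each $P_i$ agrees with $\chi_{P_i}^{e_i}$ for a unique $e_i\in\{1,\dots,\ell-1\}$, so $F\in\mathcal{F}_{q,\ell}$ is uniquely recovered; the quotient $\chi/\chi_F$ is then unramified at every finite prime and of order dividing $\ell$. Such quotient characters form a group of order $\ell$ (by class field theory it is generated by the character cutting out the constant-field extension $\F_{q^\ell}(t)/\F_q(t)$), and $b\mapsto\psi_b$ is an isomorphism from $B$ onto this group. Injectivity follows from the same local reconstruction. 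To pass to covers, a cyclic extension $K/\F_q(t)$ of degree $\ell$ corresponds to a cyclic subgroup of order $\ell$ of its characters, whose $\ell-1$ generators all cut out $K$; translating via the bijection, the action $(F,b)\mapsto(F^{(j)},b^j)$ with $F^{(j)}:=\prod P_i^{[je_i]_\ell}$ for $j\in(\Z/\ell\Z)^*$ is free, and its orbit space is in natural bijection with the set of $\ell$-cyclic covers of $\F_q(t)$.

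The main difficulty lies in the construction of $\psi_b$ in the non-Kummer case $n_q>1$: since $\F_q$ does not contain $\ell$-th roots of unity, $\psi_b$ cannot be defined directly via a character of $\F_q^*$, and one must show that the Kummer character over $\F_{q^{n_q}}(t)$ attached to $b$ is $\mathrm{Gal}(\F_{q^{n_q}}/\F_q)$-equivariant and hence descends to a character of $\F_q(t)$ --- equivalently, that the associated cover is defined over $\F_q$. This descent is precisely what the trace-like expression $\sum_{k=0}^{n_q-1}\zeta_\ell^{jq^k}\sqrt[\ell]{F_{\mathbf{v}_k}}$ in (2.5) encodes, and it is where one leverages the factorization $F=\mathfrak{F}_1\cdots\mathfrak{F}_{n_q}$ from (2.4) to guarantee Galois-invariance of the resulting cover over $\F_q$.
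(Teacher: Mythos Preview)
The paper does not give its own proof of this proposition: it is simply quoted from Bary-Soroker--Meisner \cite[Proposition~2.14]{BSM}, and the paper immediately moves on, noting only that restricting to monic conductors lets one drop the factor $B$. So there is no in-paper argument to compare your proposal against.

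That said, your sketch is a reasonable reconstruction of the underlying idea. The decomposition $\chi=\chi_F\cdot\psi_b$ into a primitive ramified part and a part unramified at all finite primes is the right organizing principle, and your identification of the latter group with $B$ via the constant-field extension $\F_{q^\ell}(t)/\F_q(t)$ is correct in both the Kummer and non-Kummer settings (in the Kummer case, $y^\ell=b$ for $b\in\F_q^*\setminus(\F_q^*)^\ell$ does cut out exactly this constant extension, since $\sqrt[\ell]{b}$ generates $\F_{q^\ell}$ over $\F_q$). One point you might sharpen: rather than framing $\psi_b$ as a separate descent problem, note that in the original reference the factor $B$ simply records the leading coefficient of the defining polynomial up to $\ell$-th powers, which is why the paper can discard it by restricting to monic $F$. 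Your local-reconstruction argument for injectivity and surjectivity, and the passage from characters to covers via the free $(\Z/\ell\Z)^*$-action, are both sound.
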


We restrict in this paper to characters with monic conductors, and it then suffices to work with the set $\mathcal{F}_{q,\ell}$.


\begin{lemma}\label{lem:dividibility}
	With notation as above, assume $n_q>1$. Then for each $0 \le k \le n_q-1$, we have $\ell \mid \deg( F_{{\bf v}_k}).$
\end{lemma}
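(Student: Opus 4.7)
The plan is to compute $\deg(F_{\mathbf{v}_k})$ explicitly and reduce the divisibility statement to a congruence of geometric sums modulo $\ell$. First I would use the definition $F_{\mathbf{v}_k}=\mathfrak{F}_1^{v_1^{(k)}}\cdots\mathfrak{F}_{n_q}^{v_{n_q}^{(k)}}$ with $v_j^{(k)}=[q^{k+1-j}]_\ell$, together with the key observation that since the $\mathfrak{F}_j$ are Galois conjugates under $\phi_q$, they all have the same degree, say $d:=\deg(\mathfrak{F}_1)$. Consequently
\begin{equation*}
\deg(F_{\mathbf{v}_k}) \;=\; d\sum_{j=1}^{n_q} [q^{k+1-j}]_\ell.
\end{equation*}

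Next I would reduce the right-hand side modulo $\ell$. Since $[q^{k+1-j}]_\ell\equiv q^{k+1-j}\pmod{\ell}$ (interpreting the exponent via the multiplicative inverse of $q$ mod $\ell$ when negative, which is legitimate because $\gcd(q,\ell)=1$), the sum becomes a truncated geometric series:
\begin{equation*}
\sum_{j=1}^{n_q} [q^{k+1-j}]_\ell \;\equiv\; q^{k+1-n_q}\,\bigl(1+q+q^2+\cdots+q^{n_q-1}\bigr) \;=\; q^{k+1-n_q}\cdot\frac{q^{n_q}-1}{q-1}\pmod{\ell}.
\end{equation*}

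Finally I would invoke the hypothesis $n_q>1$: by definition of $n_q$ as the multiplicative order of $q$ modulo $\ell$ we have $\ell\mid(q^{n_q}-1)$, while $\ell\nmid(q-1)$ (otherwise $n_q=1$). Since $\ell$ is prime, $\gcd(q-1,\ell)=1$, so $\ell$ divides the integer $(q^{n_q}-1)/(q-1)$. Combined with $\gcd(q,\ell)=1$, this gives $\sum_{j=1}^{n_q}[q^{k+1-j}]_\ell\equiv 0\pmod{\ell}$, and hence $\ell\mid\deg(F_{\mathbf{v}_k})$.

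The argument is essentially a bookkeeping exercise once one recognises the sum of exponents as a geometric series; the only real subtlety is the equal-degree observation for the Frobenius conjugates $\mathfrak{F}_j$, without which the degree of $F_{\mathbf{v}_k}$ would not factor as a scalar multiple of the exponent sum. No obstacle is anticipated.
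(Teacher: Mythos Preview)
Your proof is correct and follows essentially the same approach as the paper: both compute $\deg(F_{\mathbf{v}_k})$ as a common degree times the exponent sum $\sum_{j=1}^{n_q}[q^{k+1-j}]_\ell$, then reduce modulo $\ell$ to a geometric series and use that $n_q>1$ forces $\ell\mid(q^{n_q}-1)/(q-1)$. The only cosmetic difference is that the paper expands $\deg(\mathfrak{F}_j)$ via the decomposition $\mathfrak{F}_1=f_1f_2^2\cdots f_{\ell-1}^{\ell-1}$ before invoking the Frobenius-invariance of degrees, whereas you invoke it directly on the $\mathfrak{F}_j$.
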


\begin{proof}
By construction, 
\begin{align*}
\deg( F_{{\bf v}_k})=& \sum_{j=1}^{n_q} {\bf v}_{k,j} \deg(\mathfrak{F}_j)= \sum_{j=1}^{n_q} {\bf v}_{k,j} \deg\left(\phi^{j-1}\left(f_1f_2^2\cdots f_{\ell-1}^{\ell-1}\right)\right)\\
\equiv& \sum_{j=1}^{n_q} q^{k+1-j} \sum_{h=1}^{\ell-1} h \deg(\phi^{j-1}( f_h))\equiv \sum_{h=1}^{\ell-1} h \deg(f_h)\sum_{j=1}^{n_q} q^{k+1-j}  \bmod{\ell}.
\end{align*}
Since $n_q>1$, 
\[\sum_{j=1}^{n_q} q^{k+1-j} =\frac{q^{k+1-n_q}(q^{n_q}-1)}{q-1} \equiv 0 \bmod{\ell}.\qedhere \]
\end{proof}

\subsection{From one to  infinitely many $\ell$-cyclic covers}

Given an $\ell$-cyclic cover $C_0$, we can build $\ell$-cyclic covers $C$ with a non-constant map to $C_0$ by a change of variables, as done in {\cite[Lemma 3.2]{Donepudi-Li}} for the Kummer case when $\ell \mid \deg{F}$.
We can detect the curves $C_F$ with $F \in \mathcal{F}_{q, \ell}$ using the following lemma.

\begin{lemma}\label{lem:4.3} Let $f \in \F_{q^{n_q}}[t]$. Then, $N_{n_q}(f)$ is square-free iff $f = \mathfrak{p}_1 \cdots \mathfrak{p}_s$ where the $\mathfrak{p}_i$ are such that $N_{n_q}(\mathfrak{p}_i)$ are distinct $n_q$-divisible primes of $\F_q[t]$.
\end{lemma}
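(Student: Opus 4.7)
The plan is to prove both directions by exploiting the multiplicativity of $N_{n_q}$ together with the behavior of monic irreducibles of $\F_{q^{n_q}}[t]$ under the action of $\phi_q$. The forward (easy) direction is essentially automatic: if $f = \mathfrak{p}_1 \cdots \mathfrak{p}_s$ and each $N_{n_q}(\mathfrak{p}_i)$ is a (distinct) $n_q$-divisible prime of $\F_q[t]$, then $N_{n_q}(f) = \prod_{i=1}^{s} N_{n_q}(\mathfrak{p}_i)$ is a product of pairwise distinct primes in $\F_q[t]$, hence square-free.

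For the reverse direction, I would start by factoring $f = c \prod_j \mathfrak{q}_j^{e_j}$ into monic irreducibles in $\F_{q^{n_q}}[t]$ and then organize the $\mathfrak{q}_j$ into $\phi_q$-orbits. The key computational lemma, which I would state and prove first, is the following: if $\mathfrak{q} \in \F_{q^{n_q}}[t]$ is monic irreducible and its $\phi_q$-orbit has size $k$ (a divisor of $n_q$), then the polynomial $P := \mathfrak{q} \phi_q(\mathfrak{q})\cdots \phi_q^{k-1}(\mathfrak{q})$ lies in $\F_q[t]$ and is irreducible there (it is the minimal polynomial over $\F_q$ of any root of $\mathfrak{q}$), and
\[
N_{n_q}(\mathfrak{q}) = P^{n_q/k}.
\]
Moreover, $\deg P = k \deg(\mathfrak{q})$ and $P$ is $n_q$-divisible if and only if $k = n_q$, because the factorization of a prime $P \in \F_q[t]$ of degree $d$ over $\F_{q^{n_q}}[t]$ consists of $\gcd(d,n_q)$ irreducibles of degree $d/\gcd(d,n_q)$.

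Now grouping by orbits $O$ of size $k_O$ with representative $\mathfrak{q}_O$ and writing $P_O$ for the corresponding prime in $\F_q[t]$, multiplicativity gives
\[
N_{n_q}(f) \;=\; N_{n_q}(c) \prod_{O} P_O^{(n_q/k_O)\,\sum_{\mathfrak{q} \in O} e_{\mathfrak{q}}}.
\]
The $P_O$ are pairwise distinct primes of $\F_q[t]$ (different orbits give non-associate minimal polynomials). Square-freeness of $N_{n_q}(f)$ therefore forces every exponent $(n_q/k_O)\sum_{\mathfrak{q} \in O} e_{\mathfrak{q}}$ to equal $0$ or $1$. In each orbit that actually appears, this exponent is $\ge 1$, so we must have $n_q/k_O = 1$ and exactly one $\mathfrak{q} \in O$ with $e_{\mathfrak{q}} = 1$. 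Thus each contributing orbit has full size $n_q$, only one representative $\mathfrak{p}_O$ per orbit divides $f$, with multiplicity $1$, and $N_{n_q}(\mathfrak{p}_O) = P_O$ is an $n_q$-divisible prime. Collecting these representatives yields $f = c\,\mathfrak{p}_1\cdots \mathfrak{p}_s$ with the desired properties.

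The only mildly delicate step is the first one — pinning down the precise formula $N_{n_q}(\mathfrak{q}) = P^{n_q/k}$ and the equivalence ``$P$ is $n_q$-divisible $\iff$ the $\phi_q$-orbit of $\mathfrak{q}$ has full size $n_q$''. Once this is established, the remainder is just reading off constraints from the unique factorization of $N_{n_q}(f)$ in $\F_q[t]$, so I do not anticipate a serious obstacle.
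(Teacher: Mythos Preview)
Your proof is correct and follows essentially the same approach as the paper's, though you make explicit the orbit computation $N_{n_q}(\mathfrak{q})=P^{\,n_q/k}$ and the equivalence ``$P$ is $n_q$-divisible $\iff k=n_q$'' that the paper's three-line argument leaves implicit when it asserts that the $N_{n_q}(\mathfrak{p}_i)$ are distinct primes. The leading unit $c$ in your factorization is harmless (absorb it into one of the $\mathfrak{p}_i$ or assume $f$ monic, as in every application of the lemma).
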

\begin{proof} 
If $f = \mathfrak{p}_1 \cdots \mathfrak{p}_s$, where the $\mathfrak{p}_i$ are such that $N_{n_q}(\mathfrak{p}_i)$ are distinct $n_q$-divisible primes of $\F_q[t]$, then it is clear that $N_{n_q}(f)=N_{n_q}( \mathfrak{p}_1)\cdots N_{n_q}( \mathfrak{p}_s)$ is square-free. 

Now assume that $N_{n_q}(f)=N_{n_q}( \mathfrak{p}_1)\cdots N_{n_q}( \mathfrak{p}_s)$ is square-free. Then it is clear that the $N_{n_q}(\mathfrak{p}_i)$ are distinct primes in $ \F_{q}[t]$.  
Finally, they are $n_q$-divisible, since they are the result of taking the $N_{n_q}$-norm.  \end{proof}

\begin{definition}
 For a one-variable polynomial $f(t) \in \overline{\F_q}[t]$, let $f^*(u,v):=v^{\deg(f)} f(u/v)$ denote the homogeneous polynomial in variables $u,v$ resulting from the change of variables $t=u/v$. 
\end{definition}

\begin{lemma} \label{change-of-variable}
Let $F \in \mathcal{F}_{q,\ell}$, with $\mathfrak{F}_{1}\in \mathcal{F}^{(1)}_{q,\ell} $ given by \eqref{decomposition}
and $C_{F}$  given by \eqref{BSM}. As in \eqref{product-of-f}, we write $\mathfrak{F}_{1} = f_1 f_2^2\cdots f_{\ell-1}^{\ell-1}$, where $f_i \in \F_{q^{n_q}}[t]$ are pairwise co-prime and square-free.
\begin{itemize}
\item Let $h(t)$ be a non-constant polynomial in $\F_q[t]$ such that $$N_{n_q}(f_1 (h(t)) f_2(h(t)) \cdots f_{\ell-1}(h(t)))$$ is square-free. 
Then, $(F \circ h)(t) 
= N_{n_q}(\mathfrak{F}_{1}(h(t))) \in \mathcal{F}_{q,\ell}$. Let $C_{F\circ h}$ be given by \eqref{BSM}.
Then, 
\begin{eqnarray*}
C_{F \circ h} &\longrightarrow& C_F \\
\left(t, y\right) &\mapsto & \left( h(t), y \right)
\end{eqnarray*}
is a non-constant  map from $C_{F\circ h}$ to $C_F$. 
\item Assume that $n_q > 1$. Let $u(t), v(t)$ be non-constant polynomials in $\F_q[t]$ such that 
$$N_{n_q} \left( f_1^*(u,v) 
\cdots f_{\ell-1}^*(u,v) \right) $$  
is square-free.
 Then $G(t) = N_{n_q} \left( \mathfrak{F}_{1} ^*(u(t),v(t))   \right) \in \mathcal{F}_{q,\ell}$.
 Let $C_{G}$ be given by \eqref{BSM}.
Then
\begin{eqnarray*}
C_{G} &\longrightarrow& C_F \\
\left( t, y\right) &\mapsto & \left(u(t)/v(t), yv(t)^{-\deg(F_{{\bf v}_0})/\ell}\right)
\end{eqnarray*}
is a non-constant map from $C_{G}$ to $C_F$. 
\item Assume that $n_q = 1$ and write $\deg{F} = A \ell - \delta$, where $0 \leq \delta \leq \ell-1$. Let $u(t), v(t)$ be non-constant polynomials in $\F_q[t]$ such that 
$f_1^*(u,v) f_2^*(u,v) \cdots f_{\ell-1}^*(u,v)$ is square-free. Let  $g_i^* = f_i^*$ for $i \neq \delta$ and $g_\delta^* = v f_\delta^*$. Then, 
$g_1^*(u,v) g_2^*(u,v) \cdots g_{\ell-1}^*(u,v)$ is also square-free and
$G(t) =g_1^*(u,v) g_2^*(u,v)^2 \cdots g_{\ell-1}^*(u,v)^{\ell-1} \in \mathcal{F}_{q, \ell}$. Let $C_G: y^\ell = G(t)$. Then
\begin{eqnarray*}
C_{G} &\longrightarrow& C_F \\
\left( t, y\right) &\mapsto & \left(u(t)/v(t), yv(t)^{-A}\right)
\end{eqnarray*}
is a non-constant map from $C_{G}$ to $C_F$. 
\end{itemize}
\end{lemma}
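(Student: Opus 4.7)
The proof in all three cases follows the same template: substitute the proposed change of variables into the defining equation of $C_F$ (either the BSM form \eqref{BSM} in Cases 1--2 or its Kummer specialisation $y^\ell=F(t)$ in Case 3), clear denominators, and recognise the resulting equation as that of $C_G$ for the relevant $G$. Non-constancy is automatic because the first coordinate $h(t)$, respectively $u(t)/v(t)$, is a non-constant rational function. The real work is in verifying that the transformed polynomial lies in $\mathcal{F}_{q,\ell}$, for which the criterion in Lemma \ref{lem:4.3} is the key tool.

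For the first part, since $h\in \F_q[t]$, the Frobenius $\phi_q$ commutes with the substitution $t\mapsto h(t)$, hence $\phi_q(\mathfrak{F}_i(h(t)))=\mathfrak{F}_{i+1}(h(t))$ and $(F\circ h)(t)=N_{n_q}(\mathfrak{F}_1(h(t)))$ inherits the factorisation pattern \eqref{decomposition}. Combined with the square-freeness hypothesis and Lemma \ref{lem:4.3}, this places $F\circ h$ in $\mathcal{F}_{q,\ell}$; the BSM equation then transports to $C_{F\circ h}$ directly under $t\mapsto h(t)$, since $F_{\mathbf{v}_k}(h(t))=(F\circ h)_{\mathbf{v}_k}(t)$.

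For the second part, set $\mathfrak{G}_i:=\mathfrak{F}_i^*(u,v)$; Galois compatibility then gives $G:=N_{n_q}(\mathfrak{G}_1)\in \F_q[t]$, and the square-freeness hypothesis together with Lemma \ref{lem:4.3} places $G$ in $\mathcal{F}_{q,\ell}$. The crucial point, which is where the assumption $n_q>1$ enters, is that Lemma \ref{lem:dividibility} forces $\ell\mid \deg(F_{\mathbf{v}_k})$ for every $k$, and moreover $\deg(F_{\mathbf{v}_k})=\deg(F_{\mathbf{v}_0})$ for all $k$ (the $\mathfrak{F}_j$ share a common degree and the exponents $[q^{k+1-j}]_\ell$ are only cyclically permuted modulo $\ell$). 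Therefore $\sqrt[\ell]{F_{\mathbf{v}_k}(u/v)}=v^{-\deg(F_{\mathbf{v}_0})/\ell}\sqrt[\ell]{F_{\mathbf{v}_k}^*(u,v)}$ uniformly in $k$, so rescaling $y\mapsto yv^{-\deg(F_{\mathbf{v}_0})/\ell}$ inside each of the $\ell$ factors of \eqref{BSM} and clearing the resulting common power of $v$ yields the BSM equation of $C_G$. In the third (Kummer) part, substituting $t=u/v$ into $y^\ell=F(t)$ and $y\mapsto yv^{-A}$ produces $y^\ell=v^{A\ell-\deg F}F^*(u,v)=v^\delta F^*(u,v)$; absorbing the factor $v^\delta$ into $(f_\delta^*)^\delta$ via $g_\delta^*:=vf_\delta^*$ gives the Kummer form $y^\ell=g_1^*(g_2^*)^2\cdots (g_{\ell-1}^*)^{\ell-1}=G(t)$.

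The main obstacle I anticipate is the claim in Case 3 that $g_1^*\cdots g_{\ell-1}^*=v\cdot f_1^*\cdots f_{\ell-1}^*$ remains square-free: this does not follow from the stated hypothesis alone, as it also requires $v$ itself to be square-free and coprime to each $f_i^*(u,v)$. I would resolve this by strengthening the hypothesis on $u,v$ accordingly, or equivalently by requiring the full product $v\cdot f_1^*(u,v)\cdots f_{\ell-1}^*(u,v)$ to be square-free when $n_q=1$. A secondary technical remark, present throughout, is that although the branches $\sqrt[\ell]{F_{\mathbf{v}_k}}$ are not canonically fixed, the product in \eqref{BSM} runs over all Galois conjugates, so the defining equation is independent of the choice of root and the formal substitutions above are unambiguous.
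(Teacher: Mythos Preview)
Your argument is essentially the same as the paper's: prove the second and third bullets by substituting $t\mapsto u(t)/v(t)$ into \eqref{BSM} (respectively $y^\ell=F(t)$), use Lemma~\ref{lem:dividibility} and the cyclic-permutation observation on the $\mathbf{v}_k$ to pull out a common power of $v$, and then absorb it into $y$; the first bullet is noted as a consequence. The paper does exactly this, with the same change of variables $Y=v^{A}y$ in both cases.

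Your reservation about the square-freeness of $g_1^*\cdots g_{\ell-1}^*=v\cdot f_1^*\cdots f_{\ell-1}^*$ in the Kummer case (when $\delta\neq 0$) is well taken and is \emph{not} addressed in the paper's proof either: the paper simply asserts it in the statement of the lemma. One can argue that $v(t)\nmid f_i^*(u(t),v(t))$ when $(u,v)=1$ (since then any irreducible factor $\pi\mid v$ gives $f_i^*(u,v)\equiv a_{\deg f_i}\,u^{\deg f_i}\not\equiv 0\bmod \pi$), but one still needs $v$ square-free. In the application to Proposition~\ref{general-counting} this is harmless, because Poonen's sieve already restricts to pairs $(u,v)$ with the relevant product square-free, and one can further impose $(u,v)=1$ and $v$ square-free at negligible cost; but as a stand-alone lemma the hypothesis should indeed be strengthened as you suggest.
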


\begin{proof}

We prove the second and third point in the statement, as the first point is a consequence of them. 
First consider the case where $n_q > 1$. We replace $t$ by  $u(t)/v(t)$ in equation \eqref{BSM} and we get
 \begin{eqnarray*}
\prod_{j=0}^{\ell-1} \left( y - \sum_{k=0}^{n_q-1} \zeta_\ell^{j q^k} \sqrt[\ell]{ \frac{F_{\mathbf{v}_k}^*(u,v)}{v^{\deg(F_{\mathbf{v}_k})}}}\right) = 0.
\end{eqnarray*}
 
Recall from Lemma \ref{lem:dividibility} that for the non-Kummer case, $\ell \mid  \deg(F_{\mathbf{v}_k})$. Notice also that the $\mathbf{v}_k$ are all permutations of each other. In fact, $\mathbf{v}_{k+1}$ can be constructed from $\mathbf{v}_k$ by shifting each element one place to the right cyclically and using the fact that $q^{n_q}\equiv 1 \bmod{\ell}$. Writing $A=\frac{\deg(F_{\mathbf{v}_k})}{\ell}$, and making the change of variables $Y=v^Ay$, we finally have 
\begin{eqnarray*}
\prod_{j=0}^{\ell-1} \left( Y - \sum_{k=0}^{n_q-1} \zeta_\ell^{j q^k} \sqrt[\ell]{F_{\mathbf{v}_k}^*(u,v)}\right) = 0,
\end{eqnarray*}
 which is $C_G$ for $G(t) = N_{n_q} \left( \mathfrak{F}_{1} ^*(u(t),v(t))   \right)$.
 
 We now consider the Kummer case. We replace $t$ by  $u(t)/v(t)$ in $y^\ell = F(t)$ to get
 $$v^{A \ell} y^\ell = v^\delta F^*(u,v) = g_1^*(u,v) g_2^*(u,v)^2 \dots g_{\ell-1}^*(u,v)^{\ell-1},$$ and with the change of variables $Y = v^A y$, we get 
 $$Y^\ell = g_1^*(u,v) g_2^*(u,v)^2 \dots g_{\ell-1}^*(u,v)^{\ell-1},$$
 which is $C_G$ for $G(t) = g_1^*(u,v) g_2^*(u,v)^2 \dots g_{\ell-1}^*(u,v)^{\ell-1}.$
\end{proof}

Then Lemma \ref{change-of-variable} translates the conditions for finding curves $C_G$ with a map to $C_F$ to detecting when 
$N_{n_q} \left( f_1^*(u,v) 
\cdots f_{\ell-1}^*(u,v) \right)$ is square-free. We can now proceed to the proof of  
Proposition \ref{general-counting}.

\begin{proof}[Proof of Proposition \ref{general-counting}] Our proof follows  the argument of \cite{Donepudi-Li}, but without restricting to the particular case where $n_q=1$ and $\ell \mid \deg{F}$. 
We concentrate on the parts of their argument where using the general setting explained above introduces some changes, and we just refer to their article for the parts of their argument that can be directly used. 

Let $F=F_0$ be as in Lemma \ref{change-of-variable} and let $C_0=C_{F_0}$ be the curve \eqref{BSM}. Let $d_0$ be the degree of the conductor. 
We now give a lower bound for the number of  $\ell$-cyclic covers with conductor of degree smaller than $n$ that can be obtained by the process of  Lemma \ref{change-of-variable} applied to $F_0$, by using the square-free sieve over $\F_q[t]$.

Let
\begin{align*} 
\mathcal{P}(n) 
&= \{ (D_1,  \dots,  D_{\ell-1}) \in (\F_{q^{n_q}}[t])^{\ell-1} \;:\; D_1, \dots ,D_{\ell-1} \; \text{pairwise co-prime, monic, square-free,} \\
& \hspace{1cm}  \mathfrak{F}_{1} = D_1 \cdots D_{\ell-1}^{\ell-1} \in \mathcal{F}^{(1)}_{q, \ell},  \;\deg{(D_1 \cdots D_{\ell-1})} \leq n \}\\
&= \{ (D_1,  \dots,  D_{\ell-1}) \in (\F_{q^{n_q}}[t])^{\ell-1} \;:\; D_1, \dots ,D_{\ell-1} \;  \text{monic}, N_{n_q} (D_1 \cdots D_{\ell-1}) \; \text{square-free}, \\
& \hspace{1cm} \deg{(D_1 \cdots D_{\ell-1})} \leq n \},
\end{align*}
where the second line follows from Lemma \ref{lem:4.3}.

By the above discussion, each tuple $(D_1, \dots, D_{\ell-1}) \in \mathcal{P}(n)$ gives rise to the $\ell$-cyclic cover $C_F$ where $\mathfrak{F}_{1}=D_1 D_2^2 \cdots D_{\ell-1}^{\ell-1}$ and $F
= N_{n_q} \left(  \mathfrak{F}_{1} \right)$. The conductor is $N_{n_q}(D_1 \cdots D_{\ell-1})$ of degree $\leq n_q n$, and then the genus  is such that {$g \leq \frac{\ell-1}{2}(n_qn-2)$.}

We write $\mathfrak{F}_{1}^0 = f_1 f_2^2 \cdots f_{\ell-1}^{\ell-1}$
where $f_i \in \F_{q^{n_q}}[t]$ and $N_{n_q}(\mathfrak{F}_{1}^0)=F_0$. Notice that $d_0=\deg (N_{n_q}(f_1\cdots f_{\ell-1}))=n_q(\deg(f_1)+\cdots+\deg(f_{\ell-1}))$.
We count the number of distinct $(D_1, \dots, D_{\ell-1}) \in \mathcal{P}(n)$ such that there exists $(u,v) \in \F_q[t]^2$ with
\begin{equation}\label{eq:Dg}
D_1 (t) = f_1^*(u(t),v(t)), \dots, D_{\ell-1} (t) =f_{\ell-1} ^*(u(t),v(t)).
\end{equation}

We then need to detect when $N_{n_q}(D_1 \cdots D_{\ell-1})$ is square-free. Let $G(u, v)$ denote the homogeneous polynomial such that
$$N_{n_q} ( 
f_1^*(u,v) \cdots f_{\ell-1}^*(u,v)) = G(u,v).$$

\kommentar{
\ccom{I think this covers also the Kummer case, if this is true that $f_1^*(u,v) \cdots f_{\ell-1}^*(u,v))$ square-free implies that $g_1^*(u,v) \cdots g_{\ell-1}^*(u,v))$ also square-free. I am not sure why I am worried about that, as you guys seem to find it obvious, but if you are OK, I am}
\mcom{The reason this is obvious lies in the process of homogenization. When you go from $f(t)$ to $v^{\deg(f)}f(u/v)$ you just put enough powers of $v$ to make $f(u/v)$ a polynomial in the variables $u,v$ and not extra power at all. To have that $v$ divides $f^*(u,v)$ you would need to multiply by extra powers, but since you only multiply by $v^{\deg(f)}$ you have no room for extra powers. In other words, if $f(t)=a_0+\cdots +a_n t^n$, then $f^*(u,v)=a_0v^n+\cdots +a_nu^n$ and you're always guaranteed to have the term $a_nu^n$ which is not a multiple of $v$.}}

We now apply a result of Poonen \cite{Poonen} which counts the number of square-free values of $G(u,v)$ as $u, v$ runs over polynomials in $\F_q[t]$, as given in \cite{Donepudi-Li} 
in a form suitable for our application.
\begin{proposition}\cite[Theorem 8.1]{Poonen} \cite[Proposition 3.4]{Donepudi-Li} \label{Poonen-sieve} 	Let $P$ be a finite set of primes in $\mathbb{F}_q[t]$, $B$ be the localization of $\mathbb{F}_q[t]$ by inverting the primes in $P$, $K = \mathbb{F}_q(t)$, $f \in B[x_1, \ldots , x_m]$ be a polynomial that is square-free as an element of $K[x_1, \ldots , x_m]$ and for a choice of $x \in \mathbb{F}_q[t]^m$, we say that $f(x)$ is square-free in $B$ if the ideal $(f(x))$ is a product of distinct primes in $B$. For $b \in B$, define $|b| = |B/(b)|$ and for $b=(b_1,\ldots, b_n) \in B^n$, define $ |b| = \max{|b_i|}$. Let
	\begin{align*}
	S_f &:= \{ x \in \mathbb{F}_q[t]^m : f(x) \text{ is square-free in } B \},\\
	\mu_{S_f} &:= \lim\limits_{N \rightarrow \infty} \frac{|\{ b \in S_f : |b|<N \}|}{N^m}. 
	\end{align*}
	For each nonzero prime $ \pi$ of $B $, let $c_\pi$ be the number of $ x \in ( A / \pi^2 )^m $ that satisfy $f(x) = 0$ in $  A / \pi^2 $. 
	The limit  $\mu_{S_f}$ exists and is equal to $ \prod_{\pi}(1-c_\pi / |\pi|^{2m})$.
\end{proposition}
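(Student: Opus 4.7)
The plan is to prove Proposition \ref{Poonen-sieve} by a three-range square-free sieve over $\mathbb{F}_q[t]$, modeled on Poonen's original argument over $\mathbb{Z}$ with the standard adaptations to the function field setting. Starting from the Möbius identity
$$\mathbb{1}\{f(x) \text{ square-free in }B\} \;=\; \sum_{d} \mu(d)\,\mathbb{1}\{d^2 \mid f(x)\text{ in }B\},$$
where $d$ ranges over square-free products of primes of $B$, I would sum this over $x \in \mathbb{F}_q[t]^m$ with $|x|<N$ and split the contribution according to the size of the primes dividing $d$, introducing two truncation parameters $M_1 \le M_2$ to be chosen at the end.

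First I would handle the small range $|\pi|\le M_1$. For any square-free product $d=\pi_1\cdots\pi_r$ with $|\pi_i|\le M_1$, the Chinese Remainder Theorem gives $B/d^2 \cong \prod_i B/\pi_i^2$, and the density of $x\in\mathbb{F}_q[t]^m$ with $|x|<N$ satisfying $d^2\mid f(x)$ equals $\prod_i c_{\pi_i}/|\pi_i|^{2m}$ up to a boundary error of size $O(|d|^{2m}/N)$ coming from the discretization of residue classes. Choosing $M_1$ to grow slowly with $N$ (for instance $M_1 = N^{1/(4m)}$), the inclusion-exclusion main terms telescope into the partial Euler product $\prod_{|\pi|\le M_1}(1-c_\pi/|\pi|^{2m})$ while the accumulated boundary error is $o(N^m)$.

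Next I would bound the medium range $M_1 < |\pi| \le M_2$, with $M_2$ chosen so that $|\pi|^2 \le N$ throughout. For each such $\pi$, the naive CRT estimate gives $\#\{|x|<N : \pi^2 \mid f(x)\} \ll c_\pi (N/|\pi|^2)^m + O(|\pi|^{2m-1})$, and pairing this with the elementary bound $c_\pi = O(|\pi|^{2m-2})$---valid away from the finite set $P$ because $f \bmod \pi$ remains nonzero and square-free for almost every $\pi$---yields a total contribution of $O(N^m/M_1)$, which becomes negligible after dividing by $N^m$ and letting $M_1\to\infty$.

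The main obstacle is the large range $|\pi|>M_2$, where the hypothesis that $f$ is square-free in $K[x_1,\ldots,x_m]$ (and not merely in $B[x_1,\ldots,x_m]$) is used essentially. For such $\pi$, if $\pi^2\mid f(x)$ with $|x|<N$ then $f$ vanishes to order at least two at $x\bmod\pi$, so $f(x)$ and all the partial derivatives $\partial f/\partial x_i(x)$ must vanish modulo $\pi$; square-freeness of $f$ over $K$ forces the common zero locus of $\{f,\partial_1 f,\ldots,\partial_m f\}$ to have codimension at least $2$ in $\mathbb{A}^m_K$, and a Bezout-type count over $\mathbb{F}_q[t]$ together with a Lang--Weil fibration estimate handling the auxiliary prime direction bounds the large-prime contribution by $o(N^m)$ after summing over $\pi>M_2$. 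Assembling the three ranges, letting $N\to\infty$ first and then $M_1\to\infty$, and invoking the absolute convergence of $\prod_\pi(1-c_\pi/|\pi|^{2m})$ (which follows from the same $c_\pi=O(|\pi|^{2m-2})$ bound), one concludes that $\mu_{S_f}$ exists and equals the claimed Euler product. The genuine difficulty is uniformly controlling the large-prime error independently of $\pi$, which is precisely where the geometric input from square-freeness of $f$ over the fraction field $K$ cannot be avoided.
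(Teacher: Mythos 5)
This proposition is not proved in the paper at all: it is quoted verbatim from Poonen's Theorem 8.1 (in the form given in Donepudi--Li, Proposition 3.4), so the only thing to assess is whether your sketch would constitute a correct proof of Poonen's function-field squarefree sieve. The small- and medium-prime ranges in your outline are fine in spirit (over $\F_q[t]$ the boxes $|x|<N$ even align exactly with residue classes modulo $d^2$ once $\deg d^2$ is small compared to $N$, and the bound $c_\pi=O(|\pi|^{2m-2})$ for $\pi$ outside a finite set does give convergence of the Euler product), but your treatment of the large primes --- which you correctly identify as the whole difficulty --- rests on a false implication. From $\pi^2\mid f(x)$ for a \emph{particular} $x\in\F_q[t]^m$ you cannot conclude that $x\bmod\pi$ is a singular point of the hypersurface $f=0$, i.e.\ that all partials $\partial f/\partial x_i(x)$ vanish mod $\pi$: already for $m=1$ and $f(x_1)=x_1$, the condition $\pi^2\mid x_1$ holds for a positive density of $x_1$ while $\partial f/\partial x_1\equiv 1$. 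Divisibility of the \emph{value} $f(x)$ by $\pi^2$ is not vanishing to order two of the \emph{function} $f$ at $x\bmod\pi$, so the codimension-$2$/Bezout/Lang--Weil count you propose never gets off the ground, and the large-prime tail is left uncontrolled.

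The correct mechanism, and the reason the function-field statement is unconditional while its $\Z$-analogue requires the $abc$ conjecture, is differentiation with respect to $t$: viewing $g(t)=f(x(t))\in\F_q[t]$, a square factor $\pi^2\mid g$ forces $\pi\mid\gcd\bigl(g,\,g'\bigr)$ with $g'(t)=\sum_i \frac{\partial f}{\partial x_i}(x(t))\,x_i'(t)$, and one then counts $x$ for which $f(x)$ and this $t$-derivative share a large prime factor. Even then one must separately handle the degenerate locus where $g'$ vanishes identically (inseparability in characteristic $p$, e.g.\ all $x_i\in\F_q[t^p]$), which is an additional step in Poonen's argument and is absent from your sketch. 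Without replacing your singular-point argument by this derivative-in-$t$ argument (plus the treatment of the inseparable case), the proof has a genuine gap precisely at the step you flag as essential.
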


We then apply Proposition \ref{Poonen-sieve} to $G(u,v)$.
Following \cite[Remark 3.5]{Donepudi-Li}, let $B$ be the localization of $\F_{q}[t]$ by the set of primes $\pi$ with $|\pi|\leq \deg(N_{n_q}(f_1\cdots f_{\ell-1}))= d_0$. This guarantees that
$$
\mu_{S_G} = \lim_{N\rightarrow \infty} \frac{|\{b\in \F_{q}[t]^2, \, |b| \leq N : \, G(b) \mbox{ is square-free in } B \}|}{N^2} >0.
$$

The curve $C_F$ associated to $F = N_{n_q}(D_1 D_2^2 \cdots D_{\ell-1}^{\ell-1})=F_0^*(u(t),v(t))$ as in \eqref{eq:Dg} has genus bounded by $\frac{\ell-1}{2}(d_0\deg(u(t)/v(t))-2)$, and therefore, 
 if we want to guarantee that the genus of $C_F$ is less or equal than $g$, we can prescribe that 
\begin{equation}\label{eq:degbound}
\deg(u(t)/v(t)):=\max \{\deg u(t), \deg v(t)\} \leq \frac{g+\ell-1}{g_0+\ell-1},
\end{equation}
where $g_0$ is the genus of $C_{F_0}$.
 
Now we want to give an upper bound for the $b=(u,v) \in \F_{q}[t]^2$ satisfying condition \eqref{eq:degbound} such that equation \eqref{eq:Dg} is satisfied. Now take $N=q^{n}$, with $n=\frac{2g}{\ell-1}+2$, and we impose the condition $\max\{\deg u, \deg v\}\leq n/d_0$. Notice that 
\[\frac{n}{d_0}=\frac{2g+2(\ell-1)}{d_0(\ell-1)}=\frac{2g+2(\ell-1)}{(d_0-2)(\ell-1)+2(\ell-1)}=\frac{g+\ell-1}{g_0+\ell-1},\]
and therefore condition \eqref{eq:degbound} is satisfied. Applying Proposition \ref{Poonen-sieve}, we get a positive proportion of $\gg \mu N^{2/d_0}=\mu q^{2n/d_0}$ such that  $N_{n_q}(D_1 \cdots D_{\ell-1})$ is square-free. 

To conclude, for a fixed tuple $(D_1, \dots, D_{\ell-1})$ we need to find an upper bound on the number of pairs $(u(t),v(t))$ such that \eqref{eq:Dg} is satisfied in order to correct a double counting. Following a similar reasoning to \cite{Donepudi-Li}, we bound this number by $qn^2q^{\varepsilon n}$. 

In total, for $n$ sufficiently large,   we have
\[\gg \mu q^{  n(2/d_0-\varepsilon)}\]
elements in $\mathcal{P}(n)$ corresponding to $\ell$-cyclic covers of $\mathbb{P}^1_{\F_q}$ with conductor of degree bounded by $n$ that admit a non-constant map to $C_0$. 
\end{proof}

We then need a geometric condition for the vanishing of $\cL(C, u)$ at some point $u=u_0^{-1}$, where $C$ is a curve over $\F_q$. 
This is given by the following theorem of Li
  \cite[Section 2]{Li-vanishing} relating the existence of a rational map between curves to the divisibility of the $L$-functions. The 
 proof uses Honda-Tate theory, which states that every $q$-Weil number is an eigenvalue of the geometric Frobenius acting on the $\ell$-adic Tate module of a simple abelian variety over $\F_q$, which is unique up to isogeny.
 We refer the reader to \cite[Section 2]{Li-vanishing} for the details, and the proof of the following theorem.
 
 \begin{theorem} \label{thm-Li} Let $u_0$ be a $q$-Weil number and let $A_0$ be (the isogeny class of) the unique simple Abelian variety over $\F_q$ having $u_0$ as a Frobenius eigenvalue, as guaranteed by the theorem of Honda--Tate.
 Let $C$ be a curve over $\F_q$. Then, $\cL(C, u_0^{-1})=0$ if and only if there exists a non-trivial map $C \rightarrow A_0$ if and only if $\cL(A_0, u)$ divides $\cL(C, u)$.
 \end{theorem}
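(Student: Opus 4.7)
My plan is to reduce the statement to facts about the Jacobian $J_C$ of $C$, using the identification $\cL(C,u)=\cL(J_C,u)$ (both sides compute the reciprocal characteristic polynomial of geometric Frobenius on $V_\ell(J_C)$), and then to exploit Poincar\'e complete reducibility to decompose $J_C$ up to isogeny as $\prod_i B_i^{n_i}$ with the $B_i$ pairwise non-isogenous simple abelian varieties over $\F_q$. This yields
\begin{equation*}
\cL(C,u)=\prod_i \cL(B_i,u)^{n_i},
\end{equation*}
which is the single identity from which all three equivalences will follow.

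For (1) $\iff$ (3), I would argue that $\cL(C,u_0^{-1})=0$ means $u_0$ is a Frobenius eigenvalue of some $B_i$; by the uniqueness half of Honda--Tate, that $B_i$ must be isogenous to $A_0$, and hence $\cL(A_0,u)$ divides $\cL(C,u)$. The converse is immediate, since $u_0$ is a Frobenius eigenvalue of $A_0$ by construction, so $\cL(A_0,u_0^{-1})=0$.

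For (2) $\iff$ (3), I would use the Albanese property of the Jacobian: any non-constant morphism $C\to A_0$ (after translating so that a chosen base point of $C$ maps to $0$) extends uniquely to a non-zero homomorphism $J_C\to A_0$, and conversely any non-zero $J_C\to A_0$ restricts to a non-constant map on $C$, since its image generates $A_0$ as an abelian variety. Thus (2) is equivalent to $\mathrm{Hom}(J_C,A_0)\neq 0$. Combined with the decomposition above and the Schur-type vanishing $\mathrm{Hom}(B,B')=0$ for non-isogenous simple abelian varieties, this is equivalent to some $B_i$ being isogenous to $A_0$, which is (3).

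The main obstacle, and the place where real content rather than formalism is being used, is the Honda--Tate input: one needs both the existence half (producing $A_0$ from $u_0$) and the uniqueness half (that $u_0$ pins down a single simple isogeny class). One must also verify that $\cL(A_0,u)$ is the appropriate power of the minimal polynomial of $1/u_0$, indexed by $\dim A_0$, so that the divisibility in (3) is an honest polynomial divisibility rather than just an equality of radicals; this is precisely the point at which simplicity of $A_0$ is essential.
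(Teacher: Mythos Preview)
Your proposal is correct and follows essentially the same approach that the paper indicates: the paper does not give its own proof but refers to \cite{Li-vanishing}, noting only that the argument uses Honda--Tate theory. Your outline---pass to the Jacobian, decompose $J_C$ up to isogeny via Poincar\'e complete reducibility, use Honda--Tate uniqueness to identify the simple factor carrying $u_0$, and use the Albanese property to translate between maps $C\to A_0$ and homomorphisms $J_C\to A_0$---is exactly the standard argument underlying that reference, and your remarks on the subtleties (the role of simplicity in ensuring genuine polynomial divisibility, and why a nonzero $J_C\to A_0$ restricts to a nonconstant map on $C$) are well taken.
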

 
 \begin{proof} [Proof of Theorems \ref{thm1} and \ref{thm2}] The proof of Theorem \ref{thm1} follows directly from Proposition \ref{prop-4.1} and Theorem \ref{thm-Li}: let $C_0$ be the $\ell$-cyclic cover associated to $\chi_0$, i.e. $\cL(C_0, u_0^{-1})=0$.  By Proposition \ref{prop-4.1} and Theorem \ref{thm-Li}, there are at least $q^{2n/d_0}$ $\ell$-cyclic covers with conductor of degree $\leq n$ such that
 $\cL(C_0, u) \mid \cL(C, u) = \prod_{i=1}^{\ell-1} \cL(\chi^i, u)$, and then at least $q^{2n/d_0}$ characters of order $\ell$ and conductor of degree $\leq n$ such that $\cL(\chi, u_0^{-1})=0$.
 
 The proof of Theorem \ref{thm2} follows directly from Corollary \ref{coro:Li} and the above. Indeed, 
 if $E=E_0 \times_{\F_q} \F_q(t)$ and there exists $\chi_0$ such that $\cL(E, \chi_0, q^{-1})=0$, then by Corollary \ref{coro:Li},  $\cL(C_{\chi_0}, \alpha_1^{-1}) = 0$, and we reason as above.
 \end{proof}

\subsection{Explicit equation for $\ell$-cyclic covers} \label{explicit-BSM}

We now give more information about the equation \eqref{BSM}, including a precise formula for $n_q=2$,
 using the work of Gupta and Zagier \cite{GuptaZagier}.
We used these general formulas for $n_q=2$ to obtain the equations for the curves $C_1, C_2$ and $C_3$ in Section \ref{section-numerical-constant}.

Let $\ell$ be an odd prime number coprime to $q$, let $\omega_\ell$ denote a complex $\ell$-root of unity, and let $\mathcal{R}_{\ell, q}$ denote a set of coset representatives of $(\Z/\ell\Z)^*$ modulo the cyclic subgroup  $\langle q \rangle$. Following  \cite{GuptaZagier}, we define the  polynomial the complex polynomial 
\begin{equation}\label{eq:psi}
\Psi_{\ell,n_q}(y)= \prod_{j \in \mathcal{R}_{\ell, q}} \left( y - \sum_{k=0}^{n_q-1} \omega_\ell^{j q^k} \right),
\end{equation}
This is a polynomial of degree $\frac{\ell-1}{n_q}$. Notice that for $n_q=1$, $\Psi_{\ell,1}(y)$ gives the $\ell$th cyclotomic polynomial and for $n_q=2$,  $\Psi_{\ell,2}(y)$ gives the $\ell$th real cyclotomic polynomial.

Gupta and Zagier prove various results regarding the coefficients of $\Psi_{\ell,n_q}(y)$, and in particular, they recover a formula of Gauss:
\begin{equation}\label{eq:psi2}
\Psi_{\ell,2}(y)=\sum_{n=0}^\frac{\ell-1}{2} (-1)^{\left \lfloor \frac{\ell-1-2n}{4}\right\rfloor} \binom{\left \lfloor \frac{\ell-1+2n}{4}\right\rfloor}{n}y^n.
\end{equation}
In the following result we relate the coefficients in the equation defining $C_F$ in \eqref{BSM} to those of $\Psi_{\ell,n_q}$. Together with the results of \cite{GuptaZagier}, and \eqref{eq:psi2} in particular, this allows us to compute a more explicit formula for equation \eqref{BSM} in the case $n_q=2$.

\begin{proposition} Let  $\ell$ be an odd prime coprime to $q$ and let 
$\Psi_{\ell,n_q}(y)$ be defined as in \eqref{eq:psi}. 
Let $a_m$ be the coefficients of the following polynomial
\begin{equation}\label{am}
y^\ell+\sum_{m=0}^{\ell-1} a_{m} y^{m}:=\Psi_{\ell,n_q}(y)^{n_q}(y-n_q).
\end{equation}

Then, $a_m \in \Z$, and there exists certain coefficients $b_{s_0,\dots,s_{n_q-1}}\in \F_p\subseteq \F_q$ such that the equation defining $C_F$ in \eqref{BSM} can be written as 
\begin{equation}\label{bwaaaa}
C_F:y^\ell+\sum_{m=0}^{\ell-1} \sum_{\substack{0\leq s_k\\\ \sum_{k=0}^{n_q-1}s_k=\ell -m \\\sum_{k=0}^{n_q-1} q^ks_k\equiv 0 \bmod{\ell} }} b_{s_0,\dots,s_{n_q-1}} \mathfrak{F}_1^{\frac{1}{\ell}\sum_{k=0}^{n_q-1} s_k [q^k]_\ell} \mathfrak{F}_2^{\frac{1}{\ell}\sum_{k=0}^{n_q-1} s_k [q^{k-1}]_\ell} \cdots \mathfrak{F}_{n_q}^{\frac{1}{\ell}\sum_{k=0}^{n_q-1} s_k [q^{k+1-n_q}]_\ell}  y^{m}=0.
\end{equation}
Furthermore,  the  $b_{s_0,\dots,s_{n_q-1}} $ satisfy
\begin{equation}\label{ab}
\sum_{\substack{0\leq s_k\\\ \sum_{k=0}^{n_q-1}s_k=\ell-m\\\sum_{k=0}^{n_q-1} q^ks_k\equiv 0 \bmod{\ell}}} b_{s_0,\dots,s_{n_q-1}}=a_m,
\end{equation}
where the $a_m$ are given by \eqref{am} and the equality takes place in $\F_p\subseteq \F_q$ after reducing the $a_m$ modulo $p$ (the characteristic of $\F_q$).

In particular, for $n_q=2$, we have 
\begin{equation}\label{nice}
C_F: y^\ell+\sum_{r=1}^{\frac{\ell-1}{2}} a_{2r-1} (\mathfrak{F}_1\mathfrak{F}_2)^{\frac{\ell+1}{2}-r} y^{2r-1}-\mathfrak{F}_1\mathfrak{F}_2(\mathfrak{F}_1^{\ell-2}+\mathfrak{F}_2^{\ell-2})=0.
\end{equation}
\end{proposition}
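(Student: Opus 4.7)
The plan is to compute \eqref{BSM} by a formal expansion in which the surd factors $Y_k:=\sqrt[\ell]{F_{\mathbf{v}_k}}$ are treated as independent indeterminates, identify each coefficient via a shift-symmetry argument, and recover \eqref{ab} by specializing at $Y_k=1$.

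First I would expand
\[
\prod_{j=0}^{\ell-1}\left(y-\sum_{k=0}^{n_q-1}\zeta_\ell^{jq^k}Y_k\right)
\]
as a polynomial in $y$ and $Y_0,\ldots,Y_{n_q-1}$ with coefficients in $\Z[\zeta_\ell]$. For each $m$ and each tuple $(s_0,\ldots,s_{n_q-1})$ with $\sum_k s_k=\ell-m$, the coefficient of $y^m\prod_k Y_k^{s_k}$ is
\[
c_{s_0,\ldots,s_{n_q-1}}:=(-1)^{\ell-m}\sum_{\substack{S\subset\{0,\ldots,\ell-1\}\\|S|=\ell-m}}\;\sum_{\substack{\kappa\colon S\to\{0,\ldots,n_q-1\}\\|\kappa^{-1}(i)|=s_i}}\prod_{j\in S}\zeta_\ell^{jq^{\kappa(j)}}.
\]
Translating $S$ by any fixed $j_0\in\Z/\ell\Z$ and transporting $\kappa$ accordingly is a bijection on the indexing set that preserves the profile $(s_0,\ldots,s_{n_q-1})$ and multiplies each summand by $\zeta_\ell^{j_0\sum_k s_kq^k}$. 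Hence $c_{s_0,\ldots,s_{n_q-1}}=\zeta_\ell^{j_0\sum_k s_kq^k}\,c_{s_0,\ldots,s_{n_q-1}}$ for every $j_0$, forcing $c_{s_0,\ldots,s_{n_q-1}}=0$ whenever $\sum_k s_kq^k\not\equiv 0\pmod{\ell}$. For the surviving tuples, substituting $Y_k=F_{\mathbf{v}_k}^{1/\ell}$ turns $\prod_k Y_k^{s_k}$ into $\prod_i\mathfrak{F}_i^{(1/\ell)\sum_k s_k[q^{k+1-i}]_\ell}$ with integer exponents (the $n_q$ congruences arising from the different $i$ all reduce to a single one after multiplication by the unit $q$), which gives exactly the monomial shape in \eqref{bwaaaa}.

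I would then verify that each $b_{s_0,\ldots,s_{n_q-1}}:=c_{s_0,\ldots,s_{n_q-1}}$ lies in $\F_p$ by checking Frobenius-invariance under $\zeta_\ell\mapsto\zeta_\ell^p$: multiplication by $p$ is a bijection on $\Z/\ell\Z$ that sends $(S,\kappa)$ to $(pS,\kappa')$ with the same fiber-size profile, and the corresponding summand maps precisely to its Frobenius image, so the inner double sum is Frobenius-fixed and descends to $\F_p$. The identity \eqref{ab} and the integrality of $a_m$ then follow by specializing all $Y_k=1$: using that $\sum_{k=0}^{n_q-1}\omega_\ell^{jq^k}$ depends only on the $\langle q\rangle$-orbit of $j$, each nonzero orbit (of size $n_q$) contributes $n_q$ equal factors and the $j=0$ orbit contributes $(y-n_q)$, giving
\[
\prod_{j=0}^{\ell-1}\left(y-\sum_{k=0}^{n_q-1}\omega_\ell^{jq^k}\right)=(y-n_q)\Psi_{\ell,n_q}(y)^{n_q};
\]
matching coefficients of $y^m$ on both sides yields $a_m\in\Z$ and \eqref{ab}.

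Finally, for \eqref{nice} in the case $n_q=2$, the surviving congruence $s_0+qs_1\equiv 0\pmod{\ell}$ together with $q\equiv -1\pmod{\ell}$ reduces to $s_0\equiv s_1\pmod{\ell}$; combined with $s_0+s_1=\ell-m$, this forces $s_0=s_1=(\ell-m)/2$ for odd $m$, admits only $(s_0,s_1)\in\{(0,\ell),(\ell,0)\}$ at $m=0$, and has no solutions at the remaining even $m$. A direct computation of the exponents on $\mathfrak{F}_1$ and $\mathfrak{F}_2$ then produces the monomials $(\mathfrak{F}_1\mathfrak{F}_2)^{(\ell+1)/2-r}y^{2r-1}$ and the constant contribution $-\mathfrak{F}_1\mathfrak{F}_2(\mathfrak{F}_1^{\ell-2}+\mathfrak{F}_2^{\ell-2})$, with coefficients matching $a_{2r-1}$ and $a_0=-2$ via \eqref{ab} and Gauss's formula \eqref{eq:psi2}. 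The main obstacle is the Frobenius-invariance step establishing that each individual $b_{s_0,\ldots,s_{n_q-1}}$ (and not merely their $m$-sums, which are automatically integers) lies in $\F_p$: one must verify that the bijection induced by $j\mapsto pj$ preserves the whole fiber profile of $\kappa$, not just its cardinality, which is a combinatorial bookkeeping resting on the bijectivity of multiplication-by-$p$ on $\Z/\ell\Z$.
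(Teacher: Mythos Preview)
Your argument is correct and reaches the same conclusions, but the central vanishing step is handled by a different mechanism than the paper's. The paper introduces the auxiliary polynomial $f_{\ell,n_q}(A_0,\ldots,A_{n_q-1})=\prod_{j=0}^{\ell-1}\bigl(1-\sum_k\omega_\ell^{jq^k}A_k\bigr)$, takes its formal logarithm, and uses the orthogonality relation $\sum_{j=0}^{\ell-1}\omega_\ell^{jN}=0$ for $\ell\nmid N$ on the resulting power series to see that only monomials with $\sum_k q^ks_k\equiv 0\pmod\ell$ survive; integrality of the $b$'s is asserted up front from Galois invariance of the whole product. Your translation $(S,\kappa)\mapsto(S+j_0,\kappa(\cdot-j_0))$ achieves the same vanishing by a direct symmetry of the finite expansion, and your coefficient-by-coefficient Frobenius check is a refined version of the same Galois observation. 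Note that the bijection $j\mapsto aj$ works for \emph{every} $a\in(\Z/\ell\Z)^\times$, not only $a=p$, so over $\Q(\omega_\ell)$ your argument actually gives $c_{s_0,\ldots,s_{n_q-1}}\in\Z$; this is what cleanly yields $a_m\in\Z$ before reduction and closes the small gap you flag at the end. The paper's logarithm device is the Gupta--Zagier trick and extends naturally to related generating products; your shift-symmetry route is more elementary and entirely self-contained. For $n_q=2$ both proofs identify the surviving tuples identically; the paper recovers the odd-degree coefficients by specializing $A_0=A_1$ and appealing to \cite{GuptaZagier}, whereas your use of \eqref{ab} (a single surviving tuple for each odd $m$) is slightly more direct, and Gauss's formula \eqref{eq:psi2} is in fact not needed for \eqref{nice} itself.
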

Before proceeding to the proof, we remark that the condition $\sum_{k=0}^{n_q-1} q^ks_k\equiv 0 \bmod{\ell}$  implies that  $\sum_{k=0}^{n_q-1} q^{k-j} s_k\equiv 0 \bmod{\ell}$ (since $(q,\ell)=1$), and therefore each of the exponents of the $\mathfrak{F}_j$ in \eqref{bwaaaa} is an integer. One can also see that the $b_{s_0,\dots,s_{n_q-1}}$ are invariant by cyclic permutation of the subindexes. Each of these cyclic permutations results in a permutation in the exponents of the   $\mathfrak{F}_j$. Thus, the final polynomial is symmetric in the $\mathfrak{F}_j$.

\begin{proof}
The initial step of the proof follows  from the elementary fact that 
\[\Psi_{\ell,n_q}(y)^{n_q}(y-n_q)= \prod_{j =0}^{\ell-1} \left( y - \sum_{k=0}^{n_q-1} \omega_\ell^{j q^k} \right).\]
Since the above polynomial has coefficients in the  algebraic integers $\overline{\Z}$, and is invariant under Galois action, we conclude that $\Psi_{\ell,n_q}(y)^{n_q}(y-n_q)\in \Z[y]$ and $a_m \in \Z$. 

Following some ideas from \cite{GuptaZagier}, we consider more generally
\[f_{\ell,n_q} (A_0,\dots,A_{n_q-1})=\prod_{j=0}^{\ell-1} \left( 1 -\sum_{k=0}^{n_q-1}\omega_\ell^{j q^k} A_k\right),\]
and we remark again that this polynomial has coefficients in $\Z$. 

Taking the formal  logarithm,
\begin{align*}
-\log f_{\ell,n_q}(A_0,\dots,A_{n_q-1}) =&\sum_{j=0}^{\ell-1} \sum_{m=1}^\infty \frac{\left(\sum_{k=0}^{n_q-1}\omega_\ell^{j q^k} A_m\right)^m}{m}\\
=&\sum_{j=0}^{\ell-1} \sum_{m=1}^\infty \frac{1}{m}\sum_{\substack{h_0+\cdots +h_{n_q-1}=m\\h_i\geq 0}} \binom{m}{h_0,\dots,h_{n_q-1}} \omega_\ell^{\sum_{k=0}^{n_q-1} j q^kh_k}  A_0^{h_1} \cdots A_{n_q-1}^{h_{n_q-1}}
\\
=& \sum_{m=1}^\infty \frac{1}{m}\sum_{\substack{h_0+\cdots +h_{n_q-1}=m\\h_i\geq 0}} \binom{m}{h_1,\dots,h_{n_q}} A_0^{h_0} \cdots A_{n_q-1}^{h_{n_q-1}}\sum_{j=0}^{\ell-1} \omega_\ell^{j\sum_{k=0}^{n_q-1} q^kh_k} 
\end{align*}
and the innermost sum is zero unless $\sum_{k=0}^{n_q-1} q^kh_k\equiv 0 \bmod{\ell}$. 

In conclusion, the only powers of $A_0,\dots,A_{n_q-1}$ appearing in the Taylor series of  $\log f_{\ell,n_q}(A_0,\dots,A_{n_q-1})$ and consequently in the Taylor series of $f_{\ell,n_q}(A_0,\dots,A_{n_q-1})$ are of the form $A_0^{s_0}\cdots A_{n_q-1}^{s_{n_q-1}}$ 
such that \begin{equation}\label{argh}\sum_{k=0}^{n_q-1} q^ks_k\equiv 0 \bmod{\ell}.\end{equation} But the total degree of $f_{\ell,n_q}$ is $\ell$, and therefore $0 \leq s_0+\cdots + s_{n_q-1}\leq \ell$.
Putting this information together, we obtain 
\begin{equation}\label{bwa}
f_{\ell,n_q} (A_0,\dots,A_{n_q-1})=1+\sum_{m=0}^{\ell-1} \sum_{\substack{0\leq s_k\\\ \sum_{k=0}^{n_q-1}s_k=\ell -m \\\sum_{k=0}^{n_q-1} q^ks_k\equiv 0 \bmod{\ell} }} b_{s_0,\dots,s_{n_q-1}} A_0^{s_0}\cdots A_{n_q-1}^{s_{n_q-1}}.
\end{equation}
Reducing modulo $p$ (the characteristic of $\F_q$), making the change of variables \[A_k=\frac{\sqrt[\ell]{F_{{\bf v}_k}}}{y}=\frac{1}{y}\mathfrak{F}_1^{\frac{[q^k]_\ell}{\ell}}\mathfrak{F}_2^{\frac{[q^{k-1}]_\ell}{\ell}}\cdots \mathfrak{F}_{n_q}^{\frac{[q^{k+1-n_q}]_\ell}{\ell}},\]
and multiplying by $y^\ell$, we obtain equation \eqref{bwaaaa}. Identity \eqref{ab} follows from comparing with \eqref{am}.

When $n_q=2$, we have $q\equiv -1\bmod{\ell}$. Equation \eqref{argh} and condition $\sum_{k=0}^{n_q-1} s_k = \ell - m$ reduce the choices of $s_0,s_1$ to two cases: either $s_0=s_1$ and $m\not = 0$ or $(s_0,s_1)=(0,\ell), (\ell,0)$ and $m=0$. 

For the case $s_0=s_1$, we can set $A_0=A_1$ and reduce to the case of \cite[Theorem 3]{GuptaZagier} to find the coefficients of each $(A_0A_1)^{s_1}$. We then replace $A_0=\frac{\sqrt[\ell]{\mathfrak{F}_1\mathfrak{F}_2^{\ell-1}}}{y}$, $A_1=\frac{\sqrt[\ell]{\mathfrak{F}_1^{\ell-1}\mathfrak{F}_2}}{y}$ (or equivalently, we replace $A_0A_1$ by $\frac{\mathfrak{F}_1\mathfrak{F}_2}{y}$), and obtain the coefficients $a_m$ for $m\not =0$ from the statement. In this case one can see from working with $\Psi_{\ell, 2}(y)$ that $a_m=0$ for $m$ even different from 0. 

The cases $(s_0,s_1)=(0,\ell), (\ell,0)$ only occur for the constant coefficient in \eqref{bwaaaa}
which is
\begin{align*}
(-1)^\ell \omega_\ell^{0+\cdots +(\ell-1)} (A_0^\ell +A_1^\ell)=-(A_0^\ell +A_1^\ell).
\end{align*}
Replacing again $A_0=\frac{\sqrt[\ell]{\mathfrak{F}_1\mathfrak{F}_2^{\ell-1}}}{y}$, $A_1=\frac{\sqrt[\ell]{\mathfrak{F}_1^{\ell-1}\mathfrak{F}_2}}{y}$ and multiplying by $y^\ell$ gives equation \eqref{nice}.
\end{proof}

\section{Numerical data} 

\subsection{Description of the code} \label{section-algo}

We want to compute $L$-functions $\cL(E, \chi, u)$ described by \eqref{L-E-chi}, where $\chi$ is a character of conductor $F$. To simplify, we are choosing $q=p$ to be prime.

Following Section \ref{section-2}, the $L$-functions are polynomials of degree $\degL = \deg{N_E} + 2 \deg{F} - 4 + 2 \delta_\chi$, and 
\[ \cL(E,\chi,u)=\sum_{n=0}^\degL \left( \sum_{f\in\mathcal{M}_n} a_f \chi(f)\right) u^n = \sum_{n=0}^\degL c_n u^n,\]
where $\mathcal{M}_n$ is the set of monic polynomials of degree $n$ in $\F_p[t]$.

Using the functional equation \eqref{FE-E-chi}, we get
\begin{equation} \label{recurrence}
c_n = \omega_{E \otimes \chi} \; p^{2(n-\lfloor \degL/2 \rfloor -1)} \;\overline{c_{\degL-n}}, \;\; 0 \leq n \leq \degL,
\end{equation}
and it suffices to compute $c_i$ for $0 \leq i \leq \lfloor \degL/2 \rfloor.$ \footnote{It follows from \eqref{recurrence} that we can compute numerically the sign of the functional equation by computing $c_{\degL/2}$ when $\degL$ is even, and $c_{\lfloor \degL/2 \rfloor}$ and 
$c_{\lfloor \degL/2 \rfloor + 1}$ when $\degL$ is odd. We used this in the numerical data to compute twists of the Legendre curve by odd characters, as in this case Theorem \ref{thm-sign-FE}
 does not apply. Of course, this requires $c_{\degL/2} \neq 0$. When $c_{\degL/2} = 0$, we computed the next coefficient $c_{(\degL/2)+1}$ to get the sign of the functional equation. In all the cases considered,
 $c_{(\degL/2)+1}$ was not zero (when $c_{\degL/2} =0$), so this was enough.}
 
We then need to compute the $a_f$ appearing in \eqref{L-E-chi}, for $\deg{f} \leq \degL/2$. It follows from the Euler product that
$a_{fg}=a_f a_g$ for $(f,g)=1$, and for $P \in \F_q[t]$ and $n \geq 1$, 
\begin{align*}
a_{P^n} &=
\begin{cases}
a_P a_{P^{n-1}}-pa_{P^{n-2}}, & \text{if } P \nmid N_E,\\
a_P a_{P^{n-1}}, & \text{if } P\mid N_E,
  \end{cases} 
\end{align*}
where $p$ is the characteristic of $\F_q$.

We now turn to the computation of the $a_P$ of a fixed curve $E: y^2 = x^3 + a(t) x^2 + b(t) x + c(t)$.
For $P$ prime, we compute $a_P$ using
\[ a_P = - \sum_{\substack{x\in\mathbb{F}_p[t]\\ \deg(x)<\deg(P)}} \left( \frac{x^3+a(t)x^2+b(t)x+c(t)}{P}\right).\]

After we have computed all $a_f$ for $\deg{f} \leq (\deg{N_E} + 2 d -4 + 2 \delta_\chi)/2$, we can evaluate $\cL(E, \chi, u)$ for any Dirichlet character with conductor of degree $d$ over $\F_p[t]$.
We go through the characters of order $\ell$ and conductor degree $d$ in the following way. Let $n_p$ be the multiplicative order of $p$ modulo $\ell$ as before. Let $F\in\mathbb{F}_p[t]$ be a polynomial of degree $d$ supported on $n_p$-divisible primes. We can enumerate all characters of order $\ell$ and conductor $F$ by choosing only one character per cyclic extension of order $\ell$ of $\F_q(t)$, since the $L$-functions of the $\ell-1$ characters associated to the same extension $K$ vanish together. Writing $F=P_1\cdots P_k$, where the $P_i$ are distinct $n_p$-divisible primes, 
and 
$P_i = \mathfrak{P}_{i,1} \cdots \mathfrak{P}_{i,n_p}$ 
over $\mathbb{F}_{p^{n_p}}(t)$, we consider the (non-conjugate) characters of conductor $F$ over $\mathbb{F}_p(t)$ given by 
\begin{equation} \label{running-over-chi}
\chi(A) = \chi_{{\mathfrak{P}}_{1,1}} (A) \prod_{j=2}^k \chi_{{\mathfrak{P}_{j,1}}}^{a_j} (A), \end{equation} for $a_j \in \{1,\dots,\ell-1\}$, and 
where each $\chi_{{\mathfrak{P}}_{j,1}}$ is the $\ell$th-power residue symbol modulo ${{\mathfrak{P}}_{j,1}}$ over  $\mathbb{F}_{p^{n_p}}(t)$ defined in Section \ref{section-2}.

\subsection{Vanishing of twists of constant curves: numerical data} \label{section-numerical-constant} \label{section-isotrivial}

Let $E_0$ be an elliptic curve over $\F_p$ with $\cL(E_0, u) = (1 - \alpha_0 u) (1-\overline{\alpha}_0 u)$, and let $E = E_0 \times_{\F_p} \F_p(t)$. By \eqref{Artin-conjecture-1},  $\cL(E, \chi, p^{-1})=0$ for some character $\chi$ associated to $K/\F_q(t)$  if and only if $\cL(E/K, p^{-1})=0$, and using the
results of Section \ref{section-constant}, this is equivalent to 
\begin{equation*} 
\cL(E_0, u) \mid \cL(C_\chi, u) = \prod_{j=1}^{\ell-1} \cL(\chi^j, u).\end{equation*}

By Theorem \ref{isotrivial-counting}, once we have found one $\chi_0$ such that $\cL(C_{\chi_0}, \alpha_0^{-1})=0$, then there are infinitely many, so we concentrate on finding $\chi_0$. We examined degree $2$ factors of $\cL(\chi^j, u)$ which arise as $ \cL(E_0, u)$ for some $E_0$ over $\F_p$.

In particular, we considered the case where $\cL(\chi, u)$ has degree 2, which in the case of even (respectively odd) characters means that the conductor of $\chi$ is a polynomial of degree 4 (respectively 3) in $\F_q[t]$. Table \ref{table:d4} presents results for this case: for fixed values of $\ell$ and $p$, we computed $\cL(\chi, u)$ for all characters such that
$\cL(\chi, u)$ is a polynomial of degree 2, and we listed all the cases that we found where $\cL(\chi, u) = \cL(E_0, u)$ for some elliptic curve $E_0/\F_p$. Notice that this means 
$\cL(C_\chi, u) = \cL(E_0, u)^{\ell-1}.$
Each entry in Table \ref{table:d4} may correspond to many characters $\chi$. We did not count them, but our program keeps an instance for each case. For example, the curve $C_1/\F_5$ given by
$$y^3 + (2t^4+2t^3+t^2+4t+4) y + (3t^6+2t^5+2t^4+2t^3+t^2+t+3) = 0$$
has $L$-function $\cL(C_1, u) = (1+5u^2)^2$;
the curve $C_2/\F_{59}$ given by 
\begin{align*}
& y^5 + (54t^4+18t^3+34t^2+18t+39)y^3 + (5t^8+23t^7+44t^6+20t^5+35t^4+30t^3+17t^2+33t+21)y \\ & +(57t^{10}+18t^9+24t^8+58t^7+14t^6+9t^5+41t^4+17t^3+38t^2+48t+44) =0
\end{align*}
has $L$-function $\cL(C_2, u) = (1+59u^2)^4$; and the curve $C_3/\F_{13}$ given by
\begin{align*}
 &y^7 + (6t^4+6t^3+6t^2+12t+1)y^5 + (t^8 + 2t^7+3t^6+6t^5+t^4+5t+4)y^3 + \\
 &(6t^{12}+5t^{11}+10t^{10}+7t^8+2t^7+3t^6+9t^5+3t^4+2t^3+6t^2+t+4)y + \\ &(11t^{14}+6t^{13}+12t^{12}+10t^{11}+5t^{10}+8t^9+6t^8+2t^7+2t^6+10t^5+7t^4+12t^3+3t^2+3t+9) = 0
\end{align*}
has $L$-function $\cL(C_3, u) = (1+13u^2)^6.$

Of course, it would be interesting to prove some criteria which guarantees the existence of a character of degree $\ell$ over $\F_p$ such that $\cL(E_0, u)$ divides $\cL(\chi, u)$. From the data, we are led to believe that this could always be the case when $n_p=2$ and $\cL(E_0, u) = 1 + pu^2$, corresponding to the isogeny class of supersingular elliptic curves over $\F_p$, but we currently do not have a proof. 
We present further evidence  for larger values of $\ell$ in Table \ref{table:conjecture}. Since this becomes more time-consuming, we only consider a thin family of the characters of order $\ell$, where $a_j=1$ for all $j$ in \eqref{running-over-chi}. In some cases ($(\ell,p)=(13,103), (17,101), (31,61)$, and $(37,73)$), we did not go over all characters in the thin family, we stopped after we found $\cL(\chi, u)=(1+p u^2)$, so there might be other characters where $\cL(\chi, u) = (1+a_pu+pu^2).$
In summary, the following is true for all the cases that we tested: for every $\ell, p$ such that $n_p=2$, there exists a character $\chi$ of order $\ell$ over $\F_p$  such that $\cL(\chi, u) = 1 + pu^2$. 

\begin{remark}
	There is a large amount of work in the literature on Newton polygons of cyclic covers of $\mathbb{P}^1$, in particular on the existence of supersingular and superspecial curves. See for example, \cite{LMPT1,LMPT2,LMPT3}. But the existence of the curves we present in this paper does not follow from previous work. In fact, the existence of supersingular curves in families of cyclic covers which ramify at $4$ points with growing degree $\ell$ is surprising from a dimension counting perspective. More surprisingly, these curves are defined over the prime field $\F_p$.
\end{remark}

\begin{center}
\begin{table}[h]
\renewcommand\arraystretch{1.3}
\begin{tabular}{||c||c||c||c||} 
 \hline
 \hline
$\ell$   & $p$    & $n_p$   &$\mathcal{L}(\chi, u)= 1+a_pu+pu^2$    \\
 \hline
 \hline
 \multirow{6}*{3}
  & 5 & 2    &  $0,3$ \\
 \cline{2-4}
  & 7 & 1    &  $-2,-1,1,2,4$ \\
 \cline{2-4}
  & 11 & 2    &  $-3,0,3,6$ \\
 \cline{2-4}
  & 13 & 1    &  $-5,-4,-2,-1,1,2,4,5$ \\
 \cline{2-4}
  & 17 & 2    &  $-6,-3,0,3,6$ \\
 \cline{2-4}
  & 19 & 1    &  $-8,-7,-5,-4,-2,-1,1,2,4,5,7,8$ \\
 \hline
 \hline

 \multirow{7}*{5} 

        & 3 & 4    &  $\varnothing$ \\
    \cline{2-4}

    & 7 & 4    &  $3$ \\
\cline{2-4}

    & 11 & 1    &  $-2,2,3$ \\
\cline{2-4}

    & 13 & 4    &  $-1,4$ \\
\cline{2-4}

    & 19 & 2    &  $0,5$ \\
\cline{2-4}

    & 29 & 2    &  $0$ \\
\cline{2-4}

    & 31 & 1    &  $-2,2,3,8$ \\
    \hline \hline 

 \multirow{2}*{7} 

        & 13 & 2   &  $0$ \\
    \cline{2-4}

    & 29 & 1    &  $-2,2,5$ \\
\cline{2-4}

    \hline \hline 
\multirow{2}*{11} 
& 23 & 1   &  $\varnothing $ \\
    \cline{2-4}
    & 43 & 2    &  $0$ \\
\cline{2-4}
\hline \hline
\multirow{1}*{13} 
& 5 & 4   &  $\varnothing $ \\
\hline \hline 
\multirow{1}*{61} 
& 11 & 4   &  $\varnothing $ \\
\hline \hline 
\end{tabular} 
\bigskip
\caption{All instances of $E_0$ for which there is a $\chi$ of order $\ell$ over $\F_p$ such that $\cL(\chi, u) = \cL(E_0, u)$ for some elliptic curve $E_0 / \F_p$.} \label{table:d4}
\end{table}
\end{center}

\begin{center}
\begin{table}[h]
\renewcommand\arraystretch{1.3}
\begin{tabular}{||c||c||c||c||} 
 \hline
 \hline
$\ell$   & $p$    & $n_p$   &$\mathcal{L}(\chi, u)= 1+a_pu+pu^2$    \\
 \hline
 \multirow{1}*{13}
  & 103 & 2    &  $0$ \\
 \hline \hline 
 \multirow{2}*{17}
  & 67 & 2    &  $0$ \\
  \cline{2-4} 
 & 101 & 2    &  $0$ \\
  \hline \hline 
  \multirow{1}*{19}
  & 37 & 2    &  $0$ \\
 \hline \hline 
  \multirow{1}*{31}
  & 61 & 2    &  $0$ \\
 \hline \hline 
  \multirow{1}*{37}
  & 73 & 2    &  $0$\\
 \hline \hline 
\end{tabular}
\bigskip
\caption{More cases where there is a character $\chi$ of order $\ell$ over $\F_p$ such that $\cL(\chi, u) = (1+p^2u)$. For the cases $(\ell,p) = (17,67)$ and $(19,37)$, we considered all characters in the thin family, and we did not find any other cases where $\cL(\chi, u) = \cL(E_0, u)$ except for $\cL(E_0, u) = (1+p^2 u)$. For the other cases, we stopped after finding $\chi$ such that  $\cL(\chi, u) = (1+p^2u)$, and we did not find any other $\cL(E_0, u)$ up to that point.}\label{table:conjecture}
\end{table}
\end{center}

\subsection{Vanishing of twists of non-constant curves: numerical data} \label{section-numerical}

We now present data for the vanishing of 
 $\cL(E, \chi, p^{-1})$, where $\chi$ varies over characters of order $\ell$ over the finite field $\F_p$ for some prime $p$, and where $E$ is a non-constant curve.
 We used the Legendre curve $E_1: y^2=x(x-1)(x-t)$ and the curve $E_2: y^2 = (x-1)(x-2t^2-1)(x-t^2)$. 
 
 We remark that $E_1$ has conductor $N_1 = t(t-1)P_\infty^2$,
 discriminant $\Delta_1=16t^2(t-1)$, and $j$-invariant $j_1=\frac{256(t^2-t+1)^3}{t^2(t-1)^2}$. Thus, it is smooth and non-constant and has bad reduction at $P_\infty$. 
 Since $\deg(N_1)=4$, we conclude that $\mathcal{L}(E_1,u)=1$. Since the algebraic rank is bounded by the analytic rank (see \cite{Tate}) and this last one equals 0, we conclude that  $E_1$ has (algebraic) rank 0 over $\F_q(t)$. 
 
 Similarly, $E_2$ has conductor $N_2 = t(t-1)(t+1)(t^2+1)$, discriminant $\Delta_2=64t^4(t-1)^2(t+1)^2(t^2+1)^2$, and $j$-invariant $j_2=\frac{1728(t^4+1)^3}{t^4(t-1)^2(t+1)^2(t^2+1)^2}$. Thus, it is smooth and non-constant and has good reduction at $P_\infty$.  Since $\deg(N_2)=5$, we have $\mathcal{L}(E_2,u)=1\pm qu$, and the rank of $E_2$ over $\F_q(t)$ is at most 1.  Let $i$ be a primitive four root of unity in $\overline{\F}_q$, and consider the point 
 $$P=((1+i)t^2+(1+i)t+1,(-1+i)t(t+1)(t-i))$$
 in $E_2(K)$, where $K = \F_q(t)(i)$. One can see that the N\'eron--Tate height of $P$ is positive, and therefore $P$ has infinite order (see  the book of Shioda and Sch\"utt \cite{SS-book} for a general reference).  As before, we use that  the algebraic rank is bounded by the analytic rank \cite{Tate}. 
 If $q \equiv 1 \mod 4$, then $K=\F_q(t)$, and we conclude that  $E_2$ has (algebraic) rank exactly 1 over $\F_q(t)$. Therefore $\cL(E_2, u) = 1 - qu$. 
If $q \equiv 3 \mod 4$, then $K=\F_{q^2}(t)$, and $K/\F_q(t)$ is a quadratic constant field extension. Therefore $\cL(E/K, u) = 1 - q^2 u$, since $\deg N_E - 4 = 1$.
We also have \begin{equation}
\label{quadratic-twist} \mathcal{L}(E_2/K, {u^2}) = \mathcal{L}(E_2, u) \cL(-E_2, u),\end{equation}
where 
 \[-E_2: -y^2= (x-1)(x-2t^2-1)(x-t^2).\]
 We remark that we have $\cL(E_2, u^2)$ and not $\cL(E_2, u)$ in \eqref{quadratic-twist} because $K/\F_q(t)$ is a constant field extension (see \cite[Chapter 8]{Rosen} for more details).  When $q \equiv 3 \mod 4$, the point $2P=(t^2+1,it^2)$ defined over $\F_{q^2}(t)$ yields a (non-torsion) point $\tilde{P}=(t^2+1,t^2)$ defined over $\F_q(t)$ on  $-E_2$. Thus the algebraic rank of $-E_2$  over $\F_q(t)$ is $1$ and 
 $\cL(-E_2, u) = 1 - qu$. Now \eqref{quadratic-twist} implies that $\cL(E_2, u) = 1 + qu$.
   In conclusion, we have that \[\cL(E_2,u)=\begin{cases}1-qu \quad \mbox{ if } \quad q \equiv 1 \bmod{4},\\ 1+qu  \quad \mbox{ if } \quad q \equiv 3 \bmod{4}.\end{cases}\]

We present in Tables \ref{Legendre-3}, \ref{Legendre-5}, and \ref{Legendre-7} our results for twists of the Legendre curve  with characters of order 3, 5, and 7 respectively, and various ground fields $\F_p(t)$.  For the  curve given by  $y^2=(x-1)(x-2t^2-1)(x-t^2)$, we present in Tables \ref{David-3}, \ref{David-5}, and \ref{David-7} our results for twists of this curve with characters of order 3, 5, and 7 respectively, and various ground fields $\F_p(t)$.
We have also tested higher order twists ($\ell=11,13$ for $E_1$ and $\ell=11,31,71$ for $E_2$) but without finding any vanishing. This data is presented in Tables \ref{higher-Legendre} and \ref{higher-David}.

Each table has the same format: the first three columns are the values of $\ell$, $p$ and $n_p$ and the fourth column is the degree $d$ of the conductors of the characters of order $\ell$ over $\F_p(t)$ considered (then, $n_p$ always divides $d$). The $L$-functions $\cL(E, \chi, u)$ are then computed for all $\chi$ of order $\ell$ over $\F_p(t)$ with conductor of degree $d$, and they are classified according to their analytic rank, which is defined as $\text{rank}(\chi) = r_{\text{an}}(E, \chi) = \mbox{ord}_{u=q^{-1}} \cL(E, \chi, u)$. Since $\text{rank}(\chi^i)=\text{rank}(\chi^j)$, we only count one power per character in our data. Then, the next 
columns give the number of such $\chi$ where the analytic rank is 0, or 1, or $2, \dots$
The most extensive computation that we did was for twists of order $\ell=3$ of the curve $E_2$ for conductors of degree 8 over $\F_5(t)$, where we needed to compute $a_P$ for primes of degree $\leq 8$, which is the most involved part of computing the twisted $L$-functions $\cL(E_2, \chi, u)$ for characters with conductors of degree 8. This took approximately 20 days on an Intel(R) Core(TM) i5-4300U CPU.
This is also the only case where we found a twist  of analytic rank 3.

The data for the Legendre  curve is very compatible with the conjectures of \cite{DFK} and \cite{MR-experimental}, as we have found no instances of vanishing for any character of order 7 or higher. For the curve $E_2$, we have found many instances of vanishing for characters of order 7, but none for characters of higher order.

\begin{center}
\begin{table}[h] 
\renewcommand\arraystretch{1.3}
\begin{tabular}{||c||c||c||c||c|c|c||}
 \hline
 \hline
twist order   & $p$  &$n_p$  & $\deg$ conductor $d$  & rank 0   & rank 1 & rank 2    \\
 \hline
 \hline
 \multirow{8}*{3}
            & \multirow{4}*{5}& \multirow{4}*{2}
                        & 2    &  6   & 4   & 0 \\
 \cline{4-7}
            &          & &4 & 205    & 32 & 3    \\
 \cline{4-7}
            &        &   &  6         & 5784    & 260 & 16    \\
 \cline{4-7}
            &        &   &  8         & 302640    & 116 & 4    \\
\cline{2-7}
  & \multirow{4}*{7}& \multirow{4}*{1}
                        & 1    &  5   & 0   & 0 \\
                         \cline{4-7}
        &       &  & 2    &  37   & 4   & 0 \\
         \cline{4-7}
         &     &  & 3    &  324   & 37   & 1 \\
          \cline{4-7}
         &   &      & 4    &  2935   & 73   & 0 \\
         \hline \hline 
\end{tabular}
\bigskip
\caption{Twists of order  3 for the Legendre curve.}\label{Legendre-3}
   \end{table}
   \end{center}

\begin{center}
\renewcommand\arraystretch{1.3}
\begin{table}[h] 
\begin{tabular}{||c||c||c||c||c|c||}
 \hline
 \hline
twist order   & $p$ & $n_p$   & $\deg$ conductor $d$    & rank 0   & rank 1    \\
 \hline
 \hline
 \multirow{6}*{5}
            & \multirow{1}*{7} & \multirow{1}*{4}
                        & 4    &  585   & 3    \\
                        \cline{2-6}
                      & \multirow{4}*{11} & \multirow{4}*{1}
                       & 1  &  9   & 0    \\
                        \cline{4-6}
                      & & & 2   &  199   & 0    \\
                        \cline{4-6}
                       && & 3   &  3759   & 5    \\
                        \cline{4-6}
                    & & & 4    &  65143   & 11    \\
                        \cline{2-6}   
                         & \multirow{1}*{19}& \multirow{1}*{2}
                        & 2    &  170   & 1   \\
                        \hline\hline
                        \end{tabular}
\bigskip
\caption{Twists of order  5 for the Legendre curve.}  \label{Legendre-5}
\end{table}
\end{center}    

\begin{center}
\renewcommand\arraystretch{1.3}
\begin{table}[h] 
\begin{tabular}{||c||c||c||c||c||}
 \hline
 \hline
twist order   & $p$  &$n_p$  & $\deg$ conductor $d$    & rank 0      \\
 \hline
 \hline
 \multirow{12}*{7}
            & \multirow{1}*{5} & \multirow{1}*{6}
                        &  6   &  2580  \\
                        \cline{2-5}
                          & \multirow{1}*{11} & \multirow{1}*{3}
                        &  3   &  440  \\
                        \cline{2-5}
                          & \multirow{2}*{13} & \multirow{2}*{2}
                        &  2   &  78  \\
                          \cline{4-5}
                     &  & &  4   &  25116 \\
                        \cline{2-5}
                         & \multirow{1}*{23} & \multirow{1}*{3}
                        &  3  &  4048  \\
                        \cline{2-5}
                & \multirow{3}*{29} & \multirow{3}*{1}
                        &  1  &  27  \\
                        \cline{4-5}
                        &&&2& 2512\\
                        \cline{4-5}
                        &&&3&179192\\
                        \cline{2-5}
                      & \multirow{1}*{41} & \multirow{1}*{2}
                        &  2  &  820  \\
                        \cline{2-5}
                         & \multirow{1}*{197} & \multirow{1}*{1}
                        &  1  &  195  \\
                        \cline{2-5}
                         & \multirow{1}*{337} & \multirow{1}*{1}
                        &  1  &  335  \\
                        \cline{2-5}
                         & \multirow{1}*{379}  & \multirow{1}*{1}
                        &  1  &  377  \\
                        \hline \hline
                        \end{tabular}
\bigskip
\caption{Twists of order  7 for the Legendre curve. We have found no instances of vanishing in this case.}  \label{Legendre-7}
\end{table}
\end{center}

\begin{center}
\renewcommand\arraystretch{1.3}
\begin{table}[h] 
\begin{tabular}{||c||c||c||c||c||}
 \hline
 \hline
twist order   & $p$  &$n_p$   & $\deg$ conductor $d$    & rank 0      \\
 \hline
 \hline
 \multirow{5}*{11}
            & \multirow{1}*{5} &  \multirow{1}*{5}
                        &  5   &  624  \\
                        \cline{2-5}
                          & \multirow{1}*{23}  & \multirow{1}*{1}
                        &  1   &  21  \\
                        \cline{2-5}
                          & \multirow{1}*{43}  & \multirow{1}*{2}
                        &  2   &  903  \\
                          \cline{2-5}
                         & \multirow{1}*{67}  & \multirow{1}*{1}
                        &  1  &  65  \\
                        \cline{2-5}
                & \multirow{1}*{89}  & \multirow{1}*{1}
                        &  1  &  87  \\
\hline \hline
 \multirow{4}*{13}
            & \multirow{1}*{5}  & \multirow{1}*{4}
                        &  4   &  150  \\
                         \cline{2-5}
                          & \multirow{1}*{29}  & \multirow{1}*{3}
                        &  3   &  8120  \\
                        \cline{2-5}
                           & \multirow{2}*{53}  & \multirow{2}*{1}
                        &  1   &  51  \\
                        \cline{4-5}
                   &   &  &  2   &  16678  \\
                        \hline \hline
                        \end{tabular}
\bigskip
\caption{Twists of order  11 and 13 for the Legendre curve. We have found no instances of vanishing in this case.}  \label{higher-Legendre}
\end{table}
\end{center}

 \vfill\eject

 \begin{center}
\renewcommand\arraystretch{1.3}
\begin{table}[h]
\begin{tabular}{||c||c||c||c||c|c|c|c||}
 \hline
 \hline
twist order   & $p$  &$n_p$  & $\deg$ conductor $d$    & rank 0   & rank 1 & rank 2  &rank 3  \\
 \hline
 \hline
 \multirow{24}*{3}
            & \multirow{4}*{5} & \multirow{4}*{2}
                        & 2    &  8   & 2   & 0  & 0 \\
 \cline{4-8}    
         &  & & 4    &  214   & 26   & 0  & 0\\
          \cline{4-8}    
         &  & &   6  &  5780   & 280   & 0  & 0\\
          \cline{4-8}    
                &  & &   8  &  149222   & 2136   & 20 & 2 \\
         \cline{2-8}
          & \multirow{6}*{7}& \multirow{6}*{1}
                        & 1    &  4   & 0   & 0  & 0\\
                         \cline{4-8}    
         &  & & 2    &  30   & 2   & 0  & 0\\ \cline{4-8}    
         & &  & 3    &  264   & 22   & 2  & 0\\ \cline{4-8}    
         &  & & 4    &  2299   & 49   & 4  & 0\\ \cline{4-8}    
         &  & & 5   &  18670   & 240   & 2  & 0\\ \cline{4-8}    
         & &  & 6    &  148537   & 1343   & 32  & 0 \\ \cline{2-8} 
           & \multirow{1}*{11} & \multirow{1}*{2}
                        & 2   &  53   & 0   & 1  & 0\\
                         \cline{2-8}  
                         & \multirow{3}*{13} & \multirow{3}*{1}
                        & 1   &  8   & 0   & 0  & 0\\
                         \cline{4-8} 
                         &   &  & 2   &  122   & 12   & 0 & 0 \\
                         \cline{4-8} 
                         &   &  & 3   &  2140   & 56   & 4  & 0\\
                         \cline{2-8}
                         & \multirow{1}*{17} & \multirow{1}*{2}
                        & 2   &  116   & 20   & 0  & 0\\
                        \cline{2-8}
                         & \multirow{2}*{19} & \multirow{2}*{1}
                        & 1   &  14  & 2   & 0  & 0\\
                         \cline{4-8} 
                         &  &   & 2   &  380   & 28   & 2  & 0\\
                         \cline{2-8}
                          & \multirow{1}*{23} & \multirow{1}*{2}
                        & 2   &  244   & 6   & 2  & 0\\
                        \cline{2-8}
                         & \multirow{1}*{29} & \multirow{1}*{2}
                        & 2   &  364   & 42   & 0  & 0\\
                        \cline{2-8}
                         & \multirow{2}*{31} & \multirow{2}*{1}
                        & 1   &  26   & 2   & 0  & 0\\
                        \cline{4-8}
                      &  & & 2   &  1190   & 24   & 6  & 0\\
                       \cline{2-8}
                         & \multirow{1}*{103} & \multirow{1}*{1}
                        & 1   &  100   & 0   & 0  & 0\\
                        \cline{2-8}
                         & \multirow{1}*{109} & \multirow{1}*{1}
                        & 1   &  104   & 0   & 0  & 0\\
                         \cline{2-8}
                         & \multirow{1}*{151} & \multirow{1}*{1}
                        & 1   &  146   & 2   & 0  & 0 \\
   \hline \hline 
\end{tabular}
\bigskip
\caption{Twists of order  3 for the curve  $y^2=(x-1)(x-2t^2-1)(x-t^2)$.} \label{David-3}
   \end{table}
    \end{center}

   \begin{center}
\renewcommand\arraystretch{1.3}
\begin{table}[h]
\begin{tabular}{||c||c||c||c||c|c|c||}
 \hline
 \hline
twist order   & $p$ & $n_p$   & $\deg$ conductor $d$    & rank 0   & rank 1 & rank 2    \\
 \hline
 \hline
 \multirow{11}*{5}
            & \multirow{1}*{7} & \multirow{1}*{4}
                        & 4    &  587   & 0   & 1 \\
 \cline{2-7} 
   & \multirow{3}*{11} & \multirow{3}*{1}
                        & 1    &  8   & 0   & 0 \\
                        \cline{4-7}
                        & & & 2    &  166   & 0   & 0 \\
                        \cline{4-7}
                         & & & 3    &  3064   & 0   & 0 \\
 \cline{2-7}
 & \multirow{1}*{19}& \multirow{1}*{2}
                        & 2    &  170   & 0   & 0 \\
                        \cline{2-7}
 & \multirow{1}*{29} & \multirow{1}*{2}
                        & 2    &  388   & 18   & 0 \\
 \cline{2-7} 
 & \multirow{2}*{31} & \multirow{2}*{1}
                        & 1    &  28   & 0   & 0 \\
                        \cline{4-7}
                  &    &  & 2   &  1975   & 0   & 1 \\
 \cline{2-7} 
  & \multirow{1}*{41} & \multirow{1}*{1}
                        & 1    &  36   & 0   & 0 \\
                         \cline{2-7} 
  & \multirow{1}*{101} & \multirow{1}*{1}
                        & 1    &  96   & 0   & 0 \\
                          \cline{2-7} 
  & \multirow{1}*{131} & \multirow{1}*{1}
                        & 1    &  128   & 0   & 0 \\
   \hline \hline 
\end{tabular}
\bigskip
\caption{Twists of order  5 for the curve  $y^2=(x-1)(x-2t^2-1)(x-t^2)$.}  \label{David-5}
   \end{table}
    \end{center}

    \begin{center}
\renewcommand\arraystretch{1.3}
\begin{table}[h]
\begin{tabular}{||c||c||c||c||c|c||}
 \hline
 \hline
twist order   & $p$ &$n_p$    & $\deg$ conductor $d$    & rank 0   & rank 1    \\
 \hline
 \hline
 \multirow{7}*{7}
            & \multirow{1}*{5} & \multirow{1}*{6}
                        & 6    &  2560   &20    \\
 \cline{2-6} 
  & \multirow{1}*{11} & \multirow{1}*{3}
                        & 3   &  440   &0  \\
 \cline{2-6} 
 & \multirow{2}*{13} & \multirow{2}*{2}
                        & 2    &  72   & 6    \\
                        \cline{4-6}
                    &    &  & 4    &  24984   & 132    \\
 \cline{2-6} 
 & \multirow{2}*{29}& \multirow{2}*{1}
                        & 1    &  24   & 0   \\
                        \cline{4-6}
                  &    &  & 2   &  2046   & 16   \\
 \cline{2-6} 
 & \multirow{1}*{41} & \multirow{1}*{2}
                        & 2    &  800   & 20   \\
   \hline \hline 
\end{tabular}
\bigskip
\caption{Twists of order  7 for the curve  $y^2=(x-1)(x-2t^2-1)(x-t^2)$.}  \label{David-7}
   \end{table}
    \end{center}   
    \clearpage
    \begin{center}
\renewcommand\arraystretch{1.3}
\begin{table}[h] 
\begin{tabular}{||c||c||c||c||c||}
 \hline
 \hline
twist order   & $p$ &$n_p$   & $\deg$ conductor $d$    & rank 0      \\
 \hline
 \hline
 \multirow{9}*{11}
            & \multirow{1}*{5}  & \multirow{1}*{5}
                        &  5   &  624  \\
                        \cline{2-5}
                          & \multirow{3}*{23}   & \multirow{3}*{1}
                        &  1   &  20  \\
                         \cline{4-5}
                        &&&2 & 2152\\
                         \cline{4-5}
                        &&&3&168448\\
                        \cline{2-5}
                          & \multirow{1}*{43} & \multirow{1}*{2}
                        &  2   &  902  \\
                          \cline{2-5}
                         & \multirow{2}*{67}  & \multirow{2}*{1}
                        &  1  &  64  \\
                        \cline{4-5}
                      &  &&2 & 22370\\
                        \cline{2-5}
                & \multirow{1}*{89}  & \multirow{1}*{1}
                        &  1  &  84  \\
                        \cline{2-5}
                      & \multirow{1}*{199}  & \multirow{1}*{1}
                        &  1  &  196  \\
                        \hline 
                        \hline 
                        \multirow{1}*{31}  
            & \multirow{1}*{5} & \multirow{1}*{3}
                        &  3   &  40  \\
\hline \hline
         \multirow{1}*{71}
            & \multirow{1}*{5} & \multirow{1}*{5}
                        &  5   &  624  \\
\hline \hline
\end{tabular}
\bigskip
\caption{Twists of order 11, 31, and 71 for the curve $y^2=(x-1)(x-2t^2-1)(x-t^2)$. We have found no instances of vanishing in this case.}  \label{higher-David}
\end{table}
\end{center}



\bibliographystyle{amsalpha}

\bibliography{Bibliography}

\newcommand{\etalchar}[1]{$^{#1}$}
\providecommand{\bysame}{\leavevmode\hbox to3em{\hrulefill}\thinspace}
\providecommand{\MR}{\relax\ifhmode\unskip\space\fi MR }
\providecommand{\MRhref}[2]{%
  \href{http://www.ams.org/mathscinet-getitem?mr=#1}{#2}
}
\providecommand{\href}[2]{#2}
\begin{thebibliography}{BFKRG20}

\bibitem[BBC{\etalchar{+}}04]{BBCFH}
Ben Brubaker, Alina Bucur, Gautam Chinta, Sharon Frechette, and Jeffrey
  Hoffstein, \emph{Nonvanishing twists of {GL}(2) automorphic {$L$}-functions},
  Int. Math. Res. Not. (2004), no.~78, 4211--4239. \MR{2111362}

\bibitem[BFKRG20]{BFKRG}
Hung~M. Bui, Alexandra Florea, Jonathan~P. Keating, and Edva Roditty-Gershon,
  \emph{Moments of quadratic twists of elliptic curve {$L$}-functions over
  function fields}, Algebra Number Theory \textbf{14} (2020), no.~7,
  1853--1893. \MR{4150252}

\bibitem[BH12]{BaigHall}
Salman Baig and Chris Hall, \emph{Experimental data for {G}oldfeld's conjecture
  over function fields}, Exp. Math. \textbf{21} (2012), no.~4, 362--374.
  \MR{3004252}

\bibitem[BKR]{BKR}
Lea Beneish, Debanjana Kundu, and Anwesh Ray, \emph{Rank jumps and growth of
  shafarevich--tate groups for elliptic curves in
  $\mathbb{Z}/p\mathbb{Z}$-extensions}, arXiv:2107.09166.

\bibitem[Bru92]{Brumer}
Armand Brumer, \emph{The average rank of elliptic curves. {I}}, Invent. Math.
  \textbf{109} (1992), no.~3, 445--472. \MR{1176198}

\bibitem[BSM19]{BSM}
Lior Bary-Soroker and Patrick Meisner, \emph{On the distribution of the
  rational points on cyclic covers in the absence of roots of unity},
  Mathematika \textbf{65} (2019), no.~3, 719--742. \MR{3940228}

\bibitem[Cho]{cho}
Peter~J. Cho, \emph{Analytic ranks of elliptic curves over number fields},
  arXiv:2005.07909.

\bibitem[CKRS02]{CKRS-frequency}
J.~B. Conrey, J.~P. Keating, M.~O. Rubinstein, and N.~C. Snaith, \emph{On the
  frequency of vanishing of quadratic twists of modular {$L$}-functions},
  Number theory for the millennium, {I} ({U}rbana, {IL}, 2000), A K Peters,
  Natick, MA, 2002, pp.~301--315. \MR{1956231}

\bibitem[CL22]{Comeau-Lapointe-JNT}
Antoine Comeau-Lapointe, \emph{One-level density of the family of twists of an
  elliptic curve over function fields}, Journal of Number Theory (2022).

\bibitem[DFK04]{DFK}
Chantal David, Jack Fearnley, and Hershy Kisilevsky, \emph{On the vanishing of
  twisted {$L$}-functions of elliptic curves}, Experiment. Math. \textbf{13}
  (2004), no.~2, 185--198. \MR{2068892}

\bibitem[DFK07]{DFK2}
\bysame, \emph{Vanishing of {$L$}-functions of elliptic curves over number
  fields}, Ranks of elliptic curves and random matrix theory, London Math. Soc.
  Lecture Note Ser., vol. 341, Cambridge Univ. Press, Cambridge, 2007,
  pp.~247--259. \MR{2322350}

\bibitem[DFL]{DFL}
Chantal {David}, Alexandra {Florea}, and Matilde {Lalin}, \emph{{The mean
  values of cubic L-functions over function fields}}, arXiv e-prints,
  arXiv:1901.00817.

\bibitem[DL21]{Donepudi-Li}
Ravi Donepudi and Wanlin Li, \emph{Vanishing of {D}irichlet {L}-functions at
  the central point over function fields}, Rocky Mountain J. Math. \textbf{51}
  (2021), no.~5, 1615--1628. \MR{4382986}

\bibitem[FKK12]{FKK}
Jack Fearnley, Hershy Kisilevsky, and Masato Kuwata, \emph{Vanishing and
  non-vanishing {D}irichlet twists of {$L$}-functions of elliptic curves}, J.
  Lond. Math. Soc. (2) \textbf{86} (2012), no.~2, 539--557. \MR{2980924}

\bibitem[For19]{Fornea}
Michele Fornea, \emph{Growth of the analytic rank of modular elliptic curves
  over quintic extensions}, Math. Res. Lett. \textbf{26} (2019), no.~6,
  1571--1586. \MR{4078688}

\bibitem[GM91]{gouvea-mazur}
F.~Gouv\^{e}a and B.~Mazur, \emph{The square-free sieve and the rank of
  elliptic curves}, J. Amer. Math. Soc. \textbf{4} (1991), no.~1, 1--23.
  \MR{1080648}

\bibitem[Gol74]{Goldfeld}
Dorian~M. Goldfeld, \emph{A simple proof of {S}iegel's theorem}, Proc. Nat.
  Acad. Sci. U.S.A. \textbf{71} (1974), 1055. \MR{0344222}

\bibitem[GZ93]{GuptaZagier}
S.~Gupta and D.~Zagier, \emph{On the coefficients of the minimal polynomials of
  {G}aussian periods}, Math. Comp. \textbf{60} (1993), no.~201, 385--398.
  \MR{1155574}

\bibitem[Hay66]{hayes}
D.~R. Hayes, \emph{The expression of a polynomial as a sum of three
  irreducibles}, Acta Arith. \textbf{11} (1966), 461--488. \MR{0201422}

\bibitem[HB04]{HB}
D.~R. Heath-Brown, \emph{The average analytic rank of elliptic curves}, Duke
  Math. J. \textbf{122} (2004), no.~3, 591--623. \MR{2057019}

\bibitem[Kat02]{Katz02}
Nicholas~M. Katz, \emph{Twisted {$L$}-functions and monodromy}, Annals of
  Mathematics Studies, vol. 150, Princeton University Press, Princeton, NJ,
  2002. \MR{1875130}

\bibitem[Li18]{Li-vanishing}
Wanlin Li, \emph{Vanishing of hyperelliptic {L}-functions at the central
  point}, J. Number Theory \textbf{191} (2018), 85--103. \MR{3825462}

\bibitem[LMPT19a]{LMPT2}
Wanlin Li, Elena Mantovan, Rachel Pries, and Yunqing Tang, \emph{Newton
  polygons arising from special families of cyclic covers of the projective
  line}, Res. Number Theory \textbf{5} (2019), no.~1, Paper No. 12, 31.
  \MR{3897613}

\bibitem[LMPT19b]{LMPT1}
\bysame, \emph{Newton polygons of cyclic covers of the projective line branched
  at three points}, Research directions in number theory---{W}omen in {N}umbers
  {IV}, Assoc. Women Math. Ser., vol.~19, Springer, Cham, [2019] \copyright
  2019, pp.~115--132. \MR{4069381}

\bibitem[LMPT20]{LMPT3}
\bysame, \emph{{Newton Polygon Stratification of the Torelli Locus in Unitary
  Shimura Varieties}}, International Mathematics Research Notices (2020),
  rnaa306.

\bibitem[LOT21]{LOT}
Robert~J. Lemke~Oliver and Frank Thorne, \emph{Rank growth of elliptic curves
  in non-abelian extensions}, Int. Math. Res. Not. IMRN (2021), no.~24,
  18411--18441. \MR{4365991}

\bibitem[Mil68]{Milne}
J.~S. Milne, \emph{The {T}ate-\v{S}afarevi\v{c} group of a constant abelian
  variety}, Invent. Math. \textbf{6} (1968), 91--105. \MR{244264}

\bibitem[MR]{MR}
Barry Mazur and Karl Rubin, \emph{Arithmetic conjectures suggested by the
  statistical behavior of modular symbols}, arXiv:1910.12798.

\bibitem[MR18]{MR-Diophantine-Stability}
Barry Mazur and Karl Rubin, \emph{Diophantine stability}, Amer. J. Math.
  \textbf{140} (2018), no.~3, 571--616, With an appendix by Michael Larsen.
  \MR{3805014}

\bibitem[MR21]{MR-experimental}
Barry Mazur and Karl Rubin, \emph{Arithmetic conjectures suggested by the
  statistical behavior of modular symbols}, Experimental Mathematics (2021).

\bibitem[MS]{Meisner-S}
Patrick Meisner and Anders S\"{o}dergren, \emph{Low-lying zeros in families of
  elliptic curve ${L}$-functions over function fields}, arXiv:2110.00102.

\bibitem[Oes90]{Oesterle}
Joseph Oesterl\'{e}, \emph{Empilements de sph\`eres}, no. 189-190, 1990,
  S\'{e}minaire Bourbaki, Vol. 1989/90, pp.~Exp. No. 727, 375--397.
  \MR{1099882}

\bibitem[Poo03]{Poonen}
Bjorn Poonen, \emph{Squarefree values of multivariable polynomials}, Duke Math.
  J. \textbf{118} (2003), no.~2, 353--373. \MR{1980998}

\bibitem[Ros02]{Rosen}
Michael Rosen, \emph{Number theory in function fields}, Graduate Texts in
  Mathematics, vol. 210, Springer-Verlag, New York, 2002. \MR{1876657}

\bibitem[SS19]{SS-book}
Matthias Sch\"{u}tt and Tetsuji Shioda, \emph{Mordell-{W}eil lattices},
  Ergebnisse der Mathematik und ihrer Grenzgebiete. 3. Folge. A Series of
  Modern Surveys in Mathematics [Results in Mathematics and Related Areas. 3rd
  Series. A Series of Modern Surveys in Mathematics], vol.~70, Springer,
  Singapore, 2019. \MR{3970314}

\bibitem[Tan93]{Tan}
Ki-Seng Tan, \emph{Modular elements over function fields}, J. Number Theory
  \textbf{45} (1993), no.~3, 295--311. \MR{1247386}

\bibitem[Tat95]{Tate}
John Tate, \emph{On the conjectures of {B}irch and {S}winnerton-{D}yer and a
  geometric analog}, S\'{e}minaire {B}ourbaki, {V}ol. 9, Soc. Math. France,
  Paris, 1995, pp.~Exp. No. 306, 415--440. \MR{1610977}

\bibitem[TR92]{TanRockmore}
Ki-Seng Tan and Daniel Rockmore, \emph{Computation of {$L$}-series for elliptic
  curves over function fields}, J. Reine Angew. Math. \textbf{424} (1992),
  107--135. \MR{1144162}

\bibitem[Wei71]{Weil2}
Andr\'{e} Weil, \emph{Dirichlet series and automorphic forms}, Lect. Notes.
  Math., vol. 189, Berlin--Heidelberg--New York, 1971.

\end{thebibliography}

\end{document}